\DeclareMathAlphabet{\pazocal}{OMS}{zplm}{m}{n}
\DeclareMathAlphabet{\pazobfcal}{OMS}{zplm}{b}{n}
\numberwithin{equation}{section}
\theoremstyle{thmstyletwo}%
\newtheorem{theorem}[equation]{Theorem}
\newtheorem{proposition}[equation]{Proposition}%
\newtheorem{lemma}[equation]{Lemma}%
\newtheorem{remark}[equation]{Remark}%
\newtheorem{corollary}[equation]{Corollary}%
\newtheorem{assumption}[equation]{Assumption}
\providecommand{\meantmp}[2]{#1\langle{#2}#1\rangle}
\providecommand{\mean}[1]{\meantmp{}{#1}}
\providecommand{\Vo}{{\mathaccent23 V}}
\providecommand{\Qo}{{\mathaccent23 Q}}
\providecommand{\SSS}{\mathbf{S}}
\providecommand{\divo}{\mathrm{div}\,}
\newcommand\blfootnote[1]{%
	\begingroup
	\renewcommand\thefootnote{}\footnote{#1}%
	\addtocounter{footnote}{-1}%
	\endgroup
}
\begin{document}



\title[Conditional quasi-optimal error estimate for a FE discretization of the
$p$-NSE]{Conditional quasi-optimal error estimate for a finite element discretization of the
	$p$-Navier--Stokes equations: The case $p>2$}

\author{Alex Kaltenbach*\ORCID{0000-0001-6478-7963}
\address{\orgdiv{Institute of Mathematics}, \orgname{Technical University of Berlin},\\ \orgaddress{\street{Straße des 17.~Juni 135}, \postcode{10623}, \state{Berlin}, \country{Germany}}}}
\corresp[*]{Alex Kaltenbach: \href{email:kaltenbach@math.tu-berlin.de}{kaltenbach@math.tu-berlin.de}}
\author{Michael R\r{u}\v{z}i\v{c}ka
\address{\orgdiv{Department of Applied Mathematics}, \orgname{University of Freiburg},\\  \orgaddress{\street{Ernst--Zermelo-Straße 1}, \postcode{79104}, \state{Freiburg}, \country{Germany}}}}

\authormark{Alex Kaltenbach and Michael R\r{u}\v{z}i\v{c}ka}



\abstract{\hspace*{5mm}In this paper, we derive quasi-optimal \textit{a priori} error estimates for the kinematic pressure~for~a~Finite Element (FE) approximation  of steady systems of $p$-Navier--Stokes type in the case of shear-thickening, \textit{i.e.}, in the case $p>2$, imposing a new mild Muckenhoupt regularity condition.
}
\keywords{finite element method; $p$-Navier--Stokes system; pressure; \textit{a priori} error estimate; Muckenhoupt weights.}
 
\maketitle

\section{Introduction}\blfootnote{* funded by the Deutsche Forschungsgemeinschaft (DFG, German Research Foundation) - 525389262.}\enlargethispage{10mm}

\hspace*{5mm}In the present paper, we examine a \textit{Finite Element (FE)} approximation of 
steady systems of \textit{$p$-Navier--Stokes  type}, \textit{i.e.}, 
\begin{equation}
	\label{eq:p-navier-stokes}
	\begin{aligned}
		-\divo\SSS(\bfD\bfv)+[\nabla\bfv]\bfv+\nabla q&=\bff   \qquad&&\text{in }\Omega\,,\\
		\divo\bfv&=0 \qquad&&\text{in }\Omega\,,
		\\
		\bfv &= \mathbf{0} &&\text{on } \partial\Omega\,\textcolor{black}{,}
	\end{aligned}
\end{equation}
for quasi-optimal \textit{a priori} error estimates for the kinematic pressure  in the case of shear-thickening~fluids, \textit{i.e.},  $p>2$.
The system \eqref{eq:p-navier-stokes} describes the steady motion of a homogeneous,
incompressible fluid with shear-dependent viscosity. More precisely,
for a given vector field $\bff \colon\Omega\to \setR^d$ describing~external~forces, an incompressibility constraint  \eqref{eq:p-navier-stokes}$_2$,  and a non-slip
boundary condition \eqref{eq:p-navier-stokes}$_3$, the system \eqref{eq:p-navier-stokes} seeks for a
\textit{velocity vector field} $\bfv=(v_1,\ldots,v_d)^\top\colon \Omega\to
\setR^d $ and a \textit{kinematic pressure} $q\colon \Omega\to \setR$ solving~\eqref{eq:p-navier-stokes}.
Here, $\Omega\subseteq \mathbb{R}^d$, $d\in \set{2,3}$, is a bounded, polygonal (if $d=2$)~or~polyhedral~(if~${d=3}$)~Lipschitz domain. The \textit{extra stress tensor} $\SSS(\bfD\bfv)\colon\Omega\to \smash{\setR^{d\times d}_{\textup{sym}}}$ depends on the \textit{strain rate tensor} $\bfD\bfv\coloneqq \frac{1}{2}(\nabla\bfv+\nabla\bfv^\top)\colon\Omega\to \smash{\setR^{d\times d}_{\textup{sym}}}$, \textit{i.e.}, the symmetric part of the velocity gradient  $\nabla\bfv\coloneqq (\partial_j v_i)_{i,j=1,\ldots,d}
\colon\Omega\to \setR^{d\times d}$. The \textit{convective term} $\smash{[\nabla\bfv]\bfv\colon\Omega\to \mathbb{R}^d}$~is~defined~via $\smash{([\nabla\bfv]\bfv)_i\coloneqq \sum_{j=1}^d{v_j\partial_j v_i}}$ for all $i=1,\ldots,d$.

Throughout the paper, we  assume that the extra stress tensor~$\SSS$~has~\textit{$(p,\delta)$-structure} (\textit{cf.}~Assumption~\ref{assum:extra_stress}). The relevant example falling into this class is  $\SSS\colon \mathbb{R}^{d\times d}\to \mathbb{R}^{d\times d}_{\textup{sym}}$, for every $\bfA\in \mathbb{R}^{d\times d}$ defined via 
	\begin{align}
		\SSS(\bfA)\coloneqq \mu_0\, (\delta+\vert \bfA\vert)^{p-2}\bfA\,,\label{intro:example}
\end{align}
where $p\in (1,\infty)$, $\delta\ge 0$, and $\mu_0>0$.

The \emph{a priori} error analysis of the steady $p$-Navier--Stokes problem \eqref{eq:p-navier-stokes} using FE approximations is by now well-understood: recently, in \cite{JK23_inhom}, \emph{a priori} error estimates 
in the
 case of shear-thickening, \textit{i.e.}, in the case $p > 2$, were derived, which are optimal for the velocity vector field, but sub-optimal for the kinematic pressure.
 More precisely, this lacuna is mainly due to the following technical hurdle, here, exemplified by the error analysis of the FE approximations of the steady $p$-Navier--Stokes problem \eqref{eq:p-navier-stokes} used in \cite{JK23_inhom}:
in it, it turns out that the error of the kinematic pressure measured in the squared $L^{p'}(\Omega)$-norm is bounded by the squared $L^{(\varphi_{\smash{\vert \bfD\bfv\vert}})^*}(\Omega)$-norm of the extra-stress errors, \textit{i.e.}, it holds that
\begin{align}\label{intro:relation0}
	\begin{aligned}
		\| q_h-q\|_{p',\Omega}^2&\leq \|\bfS(\bfD\bfv_h)-\bfS(\bfD\bfv)\|_{(\varphi_{\smash{\vert \bfD\bfv\vert}})^*,\Omega}^2+\textup{(h.o.t.)}\,.
	\end{aligned}
\end{align}
The relation \eqref{intro:relation0} is a consequence of the squared discrete inf-sup stability result (\textit{cf.}\ \cite[Lem.\ 6.10]{DiPE12})
\begin{align}\label{intro:inf-sup}
	c\,\| z_h\|_{p',\Omega}^2\leq \sup_{\bfz_h\in  \Vo_h\;:\;\|\nabla\bfz_h\|_{p,\Omega}\leq 1}{\big((z_h,\divo \bfz_h)_{\Omega}\big)^2}\,,
\end{align} 
valid for each $z_h\in \Qo_h$, where $\Vo_h\subseteq (W^{1,p}_0(\Omega))^d$ denotes a discrete velocity vector space and 
 and $\Qo_h\subseteq L^{p'}_0(\Omega)$ a discrete pressure space jointly forming a discretely inf-sup-stable finite element couple.
Then, using the estimates (\textit{cf}. \cite[Lem.\ 4.4(ii)]{kr-pnse-ldg-3})
\begin{align*}
	\|\bfS(\bfD\bfv_h)-\bfS(\bfD\bfv)\|_{(\varphi_{\smash{\vert \bfD\bfv\vert}})^*,\Omega}^2\leq 
	c\,\|\bfF(\bfD\bfv_h)-\bfF(\bfD\bfv)\|_{2,\Omega}^2\,,
\end{align*}
from the relation \eqref{intro:relation0}, it follows that
\begin{align}\label{intro:relation0.1}
	\| q_h-q\|_{p',\Omega}^2\leq c\,\|\bfF(\bfD\bfv_h)-\bfF(\bfD\bfv)\|_{2,\Omega}^2+\textup{(h.o.t.)}\,,
\end{align}
\textit{i.e.}, the kinematic pressure error measured in  the squared  $L^{p'}(\Omega)$-norm is bounded by the~squared~velocity vector field error.
However, numerical experiments (\textit{cf.}\ \cite{kr-pnse-ldg-3}) indicate that the relation \eqref{intro:relation0.1} is potentially sub-optimal and suggest instead
the relation 
\begin{align}
	\label{intro:relation}
	\rho_{(\varphi_{\vert \bfD\bfv\vert})^*,\Omega}(q_h-q)\leq c\,\|\bfF(\bfD\bfv_h)-\bfF(\bfD\bfv)\|_{2,\Omega}^2+\textup{(h.o.t.)}\,.
\end{align}
	In \cite{KR24pressure}, in the case of a Local Discontinuous Galerkin (LDG) approximation, such a  discrete analogue has been established under 
	the additional assumption that the viscosity of the fluid is a Muckenhoupt weight of class $2$, \textit{i.e.}, if we have that
\begin{align}\label{intro:Muckenhoupt_regularity}
	\mu_{\bfD\bfv} \coloneqq (\delta+\vert \overline{\bfD\bfv}\vert)^{p-2}\in A_2(\mathbb{R}^d)\,,
\end{align}
where $ \overline{\bfD\bfv}\coloneqq  \bfD\bfv$ a.e.\ in $\Omega$ and $\overline{\bfD\bfv}\coloneqq \bfzero$ a.e.\  in $\mathbb{R}^d\setminus\Omega$. In \cite{kr-nekorn,kr-nekorn-add}, it turned out that the Muckenhoupt regularity 
assumption \eqref{intro:Muckenhoupt_regularity} can not be expected, in general, in the three-dimensional~case. However, in the two-dimensional case, regularity results (\textit{cf.}\ \cite{KMS2}) suggest that \eqref{intro:Muckenhoupt_regularity} is satisfied~under~mild assumptions.
Finally, if the Muckenhoupt regularity 
	condition \eqref{intro:Muckenhoupt_regularity} is satisfied, from~the~relation~\eqref{intro:relation}, we are able to derive an alternative \textit{a priori} error estimate for the pressure which turns out to be quasi-optimal and, thus, is the first quasi-optimal  \textit{a priori} error estimate for the pressure in the~case~of~shear-thickening,~\textit{i.e.},~$p>2$.

\textit{This paper is organized as follows:} 
In Section
\ref{sec:preliminaries}, we introduce the employed~\mbox{notation},~define~relevant function spaces, 
basic assumptions on the extra stress~tensor~$\SSS$~and~its consequences, the weak formulations Problem (\hyperlink{Q}{Q}) and
Problem~(\hyperlink{P}{P})~of~the~system~\eqref{eq:p-navier-stokes}, and the  discrete
operators.~In~Section~\ref{sec:fe}, we introduce the discrete weak
formulations Problem (\hyperlink{Qhldg}{Q$_h$}) and Problem
(\hyperlink{Phldg}{P$_h$}), recall known \textit{a priori} error estimates and
prove the main result of the paper, \textit{i.e.}, a quasi-optimal (with
respect to the Muckenhoupt regularity condition
\eqref{intro:Muckenhoupt_regularity}) \textit{a priori} error estimate for the
kinematic pressure in the case $p>2$ (\textit{cf.} \hspace*{-0.1mm}Theorem~\hspace*{-0.1mm}\ref{thm:error_pressure_optimal}, \hspace*{-0.1mm}Corollary \hspace*{-0.1mm}\ref{cor:error_pressure_optimal}).
\hspace*{-0.1mm}In \hspace*{-0.1mm}Section \hspace*{-0.1mm}\ref{sec:experiments}, \hspace*{-0.1mm}we \hspace*{-0.1mm}review~\hspace*{-0.1mm}the~\hspace*{-0.1mm}theoretical~\hspace*{-0.1mm}findings~\hspace*{-0.1mm}via~\hspace*{-0.1mm}numerical~\hspace*{-0.1mm}experiments.

\newpage
\section{Preliminaries}\label{sec:preliminaries}

\subsection{Basic Notation}

\hspace*{5mm}We employ $c,C>0$ to denote generic constants,~that~may change from line to line, but do not depend on the crucial quantities. Moreover, we~write~$u\sim v$ if and only if there exist constants $c,C>0$ such that $c\, u \leq v\leq C\, u$.

For $k\in \setN$ and $p\in [1,\infty]$, we employ the customary
Lebesgue spaces $(L^p(\Omega), \|\cdot\|_{p,\Omega}) $ and Sobolev
spaces $(W^{k,p}(\Omega),\|\cdot\|_{k,p,\Omega})$, where $\Omega
\subseteq \setR^d$, $d\in \{2,3\}$, is a bounded, polygonal (if $d=2$) or polyhedral (if $d=3$) Lipschitz domain. The space $\smash{W^{1,p}_0(\Omega)}$
is defined as those functions from $W^{1,p}(\Omega)$ whose traces vanish on $\partial\Omega$. 
The Hölder dual exponent is defined via $p' = \tfrac{p}{p-1} \in [1,
\infty]$.  
We denote vector-valued functions by boldface letters~and~tensor-valued
functions by capital boldface letters. 
The Euclidean inner product
between two vectors $\bfa =(a_1,\ldots,a_d)^\top,\bfb=(b_1,\ldots,b_d)^\top\in \mathbb{R}^d$ is denoted by 
$\bfa \cdot\bfb\coloneqq \sum_{i=1}^d{a_ib_i}$, while the
Frobenius inner product between two tensors $\bfA=(A_{ij})_{i,j\in \{1,\ldots,d\}},$ $\bfB=(B_{ij})_{i,j\in \{1,\ldots,d\}}\in \mathbb{R}^{d\times d }$ is denoted by
$\bfA: \bfB\coloneqq \sum_{i,j=1}^d{A_{ij}B_{ij}}$.  
Moreover, for a (Lebesgue) measurable set $M\subseteq \mathbb{R}^n$, $n\in \mathbb{N}$,  and (Lebesgue) measurable functions, vector or tensor fields $\mathbf{u},\mathbf{w}\in (L^0(M))^{\ell}$, $\ell\in  \mathbb{N}$, we write\vspace*{-1.5mm}
\begin{align*}
(\mathbf{u},\mathbf{w})_M \coloneqq \int_M \mathbf{u} \odot  \mathbf{w}\,\mathrm{d}x	\,,\\[-7mm]
\end{align*}
whenever the right-hand side is well-defined, where $\odot\colon \mathbb{R}^{\ell}\times \mathbb{R}^{\ell}\to \mathbb{R}$ either denotes scalar multiplication, the Euclidean inner  product or the Frobenius inner product.
The integral mean  of an integrable function, vector or tensor field $\mathbf{u}\in (L^0(M))^{\ell}$, $\ell\in  \mathbb{N}$,  over a (Lebesgue) measurable set $M\subseteq \mathbb{R}^n$, $n\in \mathbb{N}$,  with $\vert M\vert >0$ is denoted~by\enlargethispage{10mm}
$
	\mean{\mathbf{u}}_M \coloneqq \smash{\frac 1 {|M|}\int_M \mathbf{u} \, \mathrm{d}x}
$.

\subsection{N-functions and Orlicz spaces}

\hspace*{5mm}A convex function $\psi \colon \setR^{\geq 0} \to \setR^{\geq 0}$ is called \textit{N-function} if it holds that~${\psi(0)=0}$,~${\psi(t)>0}$~for~all~${t>0}$, $\lim_{t\rightarrow0} \psi(t)/t=0$, and
$\lim_{t\rightarrow\infty} \psi(t)/t=\infty$. 
A Carath\'eodory function $\psi \colon M \times \setR^{\geq 0} \to \setR^{\geq 0}$, where $M\subseteq \mathbb{R}^n$, $n\in \mathbb{N}$, is a (Lebesgue) measurable set, such that $\psi(x,\cdot)$ is an N-function for a.e.\ $x \in M$, is called \textit{generalized N-function}. \textcolor{black}{The
	modular~is~defined~via
	$\rho_\psi(f)\coloneqq \rho_{\psi,\Omega}(f)\coloneqq \int_\Omega
	\psi(\abs{f})\,\textup{d}x $ if $\psi$ is an N-function, and via
	$\rho_\psi(f)\coloneqq \rho_{\psi,\Omega}(f)\coloneqq \int_\Omega
	\psi(x,\abs{f(x)})\,\textup{d}x $, if $\psi$ is a generalized
	N-function.}  We~define the \textit{(convex) conjugate N-function}
We~define the \textit{(convex) conjugate N-function} $\psi^*\colon M\times \setR^{\geq 0} \to \setR^{\geq 0}$ via
${\psi^*(t,x)\coloneqq \sup_{s \geq 0} (st - \psi(s,x))}$ for all $t \ge 0$ and a.e.\ $x\in M$, which satisfies
$(\partial_t(\psi^*))(x,t) =  (\partial_t\psi)^{-1}(x,t)$ for all $t\ge 0$ and a.e.\ $x\in M$. A (generalized) N-function
$\psi$ satisfies the \textit{$\Delta_2$-condition} (in short,
$\psi \in \Delta_2$), if there exists
$K> 2$ such that ${\psi(2\,t,x) \leq K\,\psi(t,x)}$ for all
$t \geq 0$ and a.e.\ $x\in M$. The smallest such constant is denoted by
$\Delta_2(\psi) > 0$. 
We need the following version of the \textit{$\epsilon$-Young inequality}: for every
${\epsilon> 0}$, there exists a constant $c_\epsilon>0 $,
depending only on $\Delta_2(\psi),\Delta_2( \psi ^*)<\infty$, such
that for every $s,t\geq 0$ and a.e.\ $x\in M$, it holds that
\begin{align} \label{ineq:young}
	s\,t&\leq c_\epsilon \,\psi^*(s,x)+ \epsilon \, \psi(t,x)\,.
\end{align}

\subsection{Basic properties of the extra stress tensor}

\hspace*{5mm}Throughout~the~entire~paper, we will always assume that the extra stress tensor 
$\SSS$
has $(p,\delta)$-structure. A detailed
discussion and full proofs can be found, \textit{e.g.}, in
\cite{die-ett,dr-nafsa}. For a given tensor $\bfA\in \setR^{d\times d}$, we denote its symmetric part by
${\bfA^{\textup{sym}}\coloneqq \frac{1}{2}(\bfA+\bfA^\top)\in
	\setR^{d\times d}_{\textup{sym}}\coloneqq \{\bfA\in \setR^{d\times
		d}\mid \bfA=\bfA^\top\}}$.

For $p \in (1,\infty)$~and~$\delta\ge 0$, we define the \textit{special N-function}
$\phi=\phi_{p,\delta}\colon\setR^{\ge 0}\to \setR^{\ge 0}$ via\vspace*{-1.5mm}
\begin{align} 
	\label{eq:def_phi} 
	\varphi(t)\coloneqq  \int _0^t \varphi'(s)\, \mathrm ds,\quad\text{where}\quad
	\varphi'(t) \coloneqq  (\delta +t)^{p-2} t\,,\quad\textup{ for all }t\ge 0\,.
\end{align}
An important tool in our analysis \textcolor{black}{plays} {\rm shifted N-functions}
$\{\psi_a\}_{\smash{a \ge 0}}$ (\textit{cf}.\ \cite{DK08,dr-nafsa}). For a given N-function $\psi\colon\mathbb{R}^{\ge 0}\to \mathbb{R}^{\ge
	0}$, we define the family  of \textit{shifted N-functions} ${\psi_a\colon\mathbb{R}^{\ge
		0}\to \mathbb{R}^{\ge 0}}$,~${a \ge 0}$,~via
\begin{align}
	\label{eq:phi_shifted}
	\psi_a(t)\coloneqq  \int _0^t \psi_a'(s)\, \mathrm ds\,,\quad\text{where }\quad
	\psi'_a(t)\coloneqq \psi'(a+t)\frac {t}{a+t}\,,\quad\textup{ for all }t\ge 0\,.
\end{align}

\begin{assumption}[Extra stress tensor]\label{assum:extra_stress} 
	We assume that the extra stress tensor $\SSS\colon\setR^{d\times d}\to \setR^{d\times d}_{\textup{sym}}$ belongs to $C^0(\setR^{d\times d}; \setR^{d\times d}_{\textup{sym}})\cap C^1(\setR^{d\times d}\setminus\{\mathbf{0}\}; \setR^{d\times d}_{\textup{sym}}) $ and satisfies $\SSS(\bfA)=\SSS(\bfA^{\textup{sym}})$ for all $\bfA\in \setR^{d\times d}$ and $\SSS(\mathbf{0})=\mathbf{0}$. Moreover, we assume~that~the~tensor $\SSS=(S_{ij})_{i,j=1,\ldots,d}$ has \textup{$(p,\delta)$-structure}, \textit{i.e.},
	for some $p \in (1, \infty)$, $ \delta\in [0,\infty)$, and the
	N-function $\varphi=\varphi_{p,\delta}$ (\textit{cf}.~\eqref{eq:def_phi}), there
	exist constants $C_0, C_1 >0$ such that
	\begin{align}
		\sum\limits_{i,j,k,l=1}^d \partial_{kl} S_{ij} (\bfA)
		B_{ij}B_{kl} &\ge C_0 \, \frac{\phi'(|\bfA^{\textup{sym}}|)}{|\bfA^{\textup{sym}}|}\,|\bfB^{\textup{sym}}|^2\,,\label{assum:extra_stress.1}
		\\
		\big |\partial_{kl} S_{ij}({\bfA})\big | &\le C_1 \, \frac{\phi'(|\bfA^{\textup{sym}}|)}{|\bfA^{\textup{sym}}|}\label{assum:extra_stress.2}
	\end{align}
	are satisfied for all $\bfA,\bfB \in \setR^{d\times d}$ with $\bfA^{\textup{sym}}\neq \mathbf{0}$ and all $i,j,k,l=1,\ldots,d$.~The~constants $C_0,C_1>0$ and $p\in (1,\infty)$ are called the \textup{characteristics} of $\SSS$. \enlargethispage{2mm}
\end{assumption}

\begin{remark}
	\begin{itemize}[{(ii)}]
		\item[(i)] Assume that $\SSS$ satisfies Assumption \ref{assum:extra_stress} for some 
		$\delta \in [0,\delta_0]$.~Then,~if~not~otherwise stated, the
		constants in the estimates depend only on the
                characteristics of\ $\SSS$ and $\delta_0\ge 0$, but are independent of $\delta\ge 0$.
		
		\item[(ii)] Let $\phi$ be defined in \eqref{eq:def_phi} and 
		$\{\phi_a\}_{a\ge 0}$ be the corresponding family of shifted \mbox{N-functions}. Then, the operators 
		$\SSS_a\colon\mathbb{R}^{d\times d}\to \smash{\mathbb{R}_{\textup{sym}}^{d\times
				d}}$, $a \ge 0$, for every $a \ge 0$
		and~$\bfA \in \mathbb{R}^{d\times d}$ defined via 
		\begin{align}
			\label{eq:flux}
			\SSS_a(\bfA) \coloneqq 
			\frac{\phi_a'(\abs{\bfA^{\textup{sym}}})}{\abs{\bfA^{\textup{sym}}}}\,
			\bfA^{\textup{sym}}\,, 
		\end{align}
		have $(p, \delta +a)$-structure.  In this case, the characteristics of
		$\SSS_a$ depend~only~on~${p\in (1,\infty)}$ and are independent of
		$\delta \geq 0$ and $a\ge 0$.
	\end{itemize}
\end{remark}

Closely related to the extra stress tensor $\SSS\colon \setR^{d\times d}\to \setR^{d\times d}_{\textup{sym}}$ with
$(p,\delta)$-structure 
is the non-linear mapping
$\bfF\colon\setR^{d\times d}\to \setR^{d\times d}_{\textup{sym}}$, 
for every $\bfA\in \mathbb{R}^{d\times d}$ defined via
\begin{align}
	\begin{aligned}
		\bfF(\bfA)&\coloneqq (\delta+\vert \bfA^{\textup{sym}}\vert)^{\frac{p-2}{2}}\bfA^{\textup{sym}}\,.
	\end{aligned}\label{eq:def_F}
\end{align}
The connections between
$\SSS,\bfF\colon \setR^{d \times d}
\to \setR^{d\times d}_{\textup{sym}}$ and
$\phi_a,(\phi_a)^*\colon \setR^{\ge
	0}\to \setR^{\ge
	0}$,~${a\ge  0}$, are best explained
by the following proposition (\textit{cf}.~\cite{die-ett,dr-nafsa,dkrt-ldg}).

\begin{proposition}
	\label{lem:hammer}
	Let $\SSS$ satisfy Assumption~\ref{assum:extra_stress}, let $\varphi$ be defined in \eqref{eq:def_phi}, and let $\bfF$ be defined in \eqref{eq:def_F}. Then, uniformly with respect to 
	$\bfA, \bfB \in \setR^{d \times d}$, we have that\vspace{-1mm}
	\begin{align}
		\big(\SSS(\bfA) - \SSS(\bfB)\big):(\bfA-\bfB )
		&\sim  \vert  \bfF(\bfA) - \bfF(\bfB)\vert ^2 \notag
		\\
		&\sim \phi_{\vert \bfA^{\textup{sym}}\vert }(\vert \bfA^{\textup{sym}}
		- \bfB^{\textup{sym}}\vert) \label{eq:hammera}
		\\
		&\sim(\varphi_{\vert\bfA^{\textup{sym}}\vert})^*(\abs{\SSS(\bfA ) - \SSS(\bfB )})\,,\notag
		\\
		\label{eq:hammere}
		\abs{\SSS(\bfA) - \SSS(\bfB)}
		&\sim  \smash{\phi'_{\abs{\bfA}}(\abs{\bfA - \bfB})}\,.
	\end{align}
	The constants 
	depend only on the characteristics of ${\SSS}$.
\end{proposition}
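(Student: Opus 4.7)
The plan is to reduce every equivalence to three well-documented ingredients from the shifted N-function calculus: (i) the pointwise bounds \eqref{assum:extra_stress.1}--\eqref{assum:extra_stress.2} combined with the fundamental theorem of calculus along the straight segment joining $\bfB^{\textup{sym}}$ to $\bfA^{\textup{sym}}$; (ii) the classical integral lemma asserting that, under the $\Delta_2$-condition,
\begin{align*}
\int_0^1\frac{\varphi'(\vert\bfC+t\bfH\vert)}{\vert\bfC+t\bfH\vert}\,\mathrm{d}t\sim\frac{\varphi'(\vert\bfC\vert+\vert\bfH\vert)}{\vert\bfC\vert+\vert\bfH\vert}\sim\frac{\varphi'_{\vert\bfC\vert}(\vert\bfH\vert)}{\vert\bfH\vert}
\end{align*}
for all $\bfC,\bfH\in \setR^{d\times d}$; and (iii) the shifted N-function identities $\varphi_a(t)\sim\varphi'_a(t)\,t$ and $(\varphi_a)^*(\varphi'_a(t))\sim\varphi_a(t)$, together with the shift-change lemma $\varphi_a(t)\sim\varphi_b(t)$ whenever $a+t\sim b+t$. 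All three ingredients are classical (\textit{cf.}~\cite{die-ett,dr-nafsa}).

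For the first chain of equivalences, I would use $\SSS(\bfA)=\SSS(\bfA^{\textup{sym}})$ to write
\begin{align*}
\bigl(\SSS(\bfA)-\SSS(\bfB)\bigr):(\bfA-\bfB)=\int_0^1\sum_{i,j,k,l=1}^d\partial_{kl}S_{ij}(\bfC_t)\,(\bfA^{\textup{sym}}-\bfB^{\textup{sym}})_{kl}(\bfA-\bfB)_{ij}\,\mathrm{d}t,
\end{align*}
where $\bfC_t\coloneqq\bfB^{\textup{sym}}+t(\bfA^{\textup{sym}}-\bfB^{\textup{sym}})$. Since $\partial_{kl}S_{ij}$ is symmetric in the index pair $(i,j)$ (the range of $\SSS$ being $\setR^{d\times d}_{\textup{sym}}$), the occurrence of $\bfA-\bfB$ in the test may be replaced by $\bfA^{\textup{sym}}-\bfB^{\textup{sym}}$. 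Sandwiching the resulting integrand by means of \eqref{assum:extra_stress.1}--\eqref{assum:extra_stress.2} and invoking~(ii) with $\bfC=\bfB^{\textup{sym}}$, $\bfH=\bfA^{\textup{sym}}-\bfB^{\textup{sym}}$ yields
\begin{align*}
\bigl(\SSS(\bfA)-\SSS(\bfB)\bigr):(\bfA-\bfB)\sim\varphi'_{\vert\bfB^{\textup{sym}}\vert}(\vert\bfA^{\textup{sym}}-\bfB^{\textup{sym}}\vert)\,\vert\bfA^{\textup{sym}}-\bfB^{\textup{sym}}\vert,
\end{align*}
which by (iii) equals, up to constants, $\varphi_{\vert\bfA^{\textup{sym}}\vert}(\vert\bfA^{\textup{sym}}-\bfB^{\textup{sym}}\vert)$. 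The analogous computation with $\bfF$ in place of $\SSS$, using the pointwise identity $\vert D\bfF(\bfC)\vert^2\sim\varphi'(\vert\bfC\vert)/\vert\bfC\vert$ obtained by direct differentiation of \eqref{eq:def_F}, delivers the equivalence with $\vert\bfF(\bfA)-\bfF(\bfB)\vert^2$.

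For relation \eqref{eq:hammere}, I would apply the same integral representation to $\SSS$ but take the tensor norm, so that \eqref{assum:extra_stress.2} together with~(ii) produces $\vert\SSS(\bfA)-\SSS(\bfB)\vert\sim\varphi'_{\vert\bfA\vert}(\vert\bfA-\bfB\vert)$; the replacement of $\vert\bfA^{\textup{sym}}\vert$ by $\vert\bfA\vert$ in the shift is admissible thanks to the $1$-Lipschitz symmetric-part map and the shift-change lemma in~(iii). Plugging this bound into the shifted Young identity $(\varphi_a)^*(\varphi'_a(t))\sim\varphi_a(t)$ with $a=\vert\bfA^{\textup{sym}}\vert$ and $t=\vert\bfA^{\textup{sym}}-\bfB^{\textup{sym}}\vert$ then closes the remaining equivalence with the conjugate modular.

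The genuinely delicate step is the integral lemma~(ii): its proof proceeds by a dichotomy according to whether $\vert\bfH\vert$ is comparable to, larger, or smaller than $\vert\bfC\vert$, combined with the almost-monotonicity of $t\mapsto\varphi'(t)/t$ inherited from $\Delta_2(\varphi),\Delta_2(\varphi^*)<\infty$. Once this lemma and the shifted N-function identities are taken as black boxes, all remaining arguments reduce to bookkeeping of shifts and the pointwise symmetry $\SSS(\bfA)=\SSS(\bfA^{\textup{sym}})$, so in practice I would simply cite \cite{die-ett,dr-nafsa,dkrt-ldg} for the technical machinery and only verify the shift-exchange steps that are specific to the present statement.
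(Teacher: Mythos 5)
Your proposal is correct in its overall architecture and matches the argument that the paper delegates to the cited references \cite{die-ett,dr-nafsa,dkrt-ldg}; the paper itself offers no proof, only the citation, so there is no in-paper proof to compare against beyond that pointer. Two small refinements are worth flagging. First, the fundamental-theorem-of-calculus representation together with \eqref{assum:extra_stress.2} and the integral lemma only yields the \emph{upper} bound $\abs{\SSS(\bfA)-\SSS(\bfB)}\lesssim\varphi'_{\vert\bfA\vert}(\vert\bfA-\bfB\vert)$; the matching lower bound in \eqref{eq:hammere} is not a direct consequence of that representation but instead follows by combining the already-established chain \eqref{eq:hammera} with Cauchy--Schwarz, $\varphi_{\vert\bfA\vert}(\vert\bfA-\bfB\vert)\sim(\SSS(\bfA)-\SSS(\bfB)):(\bfA-\bfB)\le\abs{\SSS(\bfA)-\SSS(\bfB)}\,\vert\bfA-\bfB\vert$, and the identity $\varphi_a(t)\sim\varphi'_a(t)\,t$; your write-up should make that dependence explicit rather than presenting \eqref{eq:hammere} as a self-contained consequence of the FTC step. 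Second, for the $\bfF$-equivalence the Minkowski integral inequality only produces the upper bound on $\vert\bfF(\bfA)-\bfF(\bfB)\vert^2$; the lower bound cannot be obtained by pulling the squared norm through the time integral, and one should either invoke the explicit pointwise formula $\vert\bfF(\bfA)-\bfF(\bfB)\vert\sim(\delta+\vert\bfA^{\textup{sym}}\vert+\vert\bfB^{\textup{sym}}\vert)^{(p-2)/2}\vert\bfA^{\textup{sym}}-\bfB^{\textup{sym}}\vert$ (standard in the cited references) or use the bilinear form $(\bfF(\bfA)-\bfF(\bfB)):(\bfA-\bfB)$ as an intermediary, analogously to the $\SSS$-argument. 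With these two bookkeeping points repaired, your sketch is a faithful reconstruction of the standard proof.
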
 
\begin{remark}\label{rem:sa}
	{\rm
		For the operators $\SSS_a\hspace{-0.1em}\colon\hspace{-0.1em}\mathbb{R}^{d\times d}\hspace{-0.1em}\to\hspace{-0.1em}\smash{\mathbb{R}_{\textup{sym}}^{d\times
				d}}$, $a \ge  0$, defined~in~\eqref{eq:flux},~the~assertions of Proposition~\ref{lem:hammer} hold with $\phi\colon\mathbb{R}^{\ge 0}\to \mathbb{R}^{\ge 0}$ replaced
		by $\phi_a\colon\mathbb{R}^{\ge 0}\to \mathbb{R}^{\ge 0}$, $a\ge 0$.}
\end{remark}

The following results can be found in~\cite{DK08,dr-nafsa}. 

\begin{lemma}[Change of Shift]\label{lem:shift-change}
	Let $\varphi$ be defined in \eqref{eq:def_phi} and let $\bfF$ be defined in \eqref{eq:def_F}. Then,
	for each $\varepsilon>0$, there exists $c_\varepsilon\geq 1$ (depending only
	on~$\varepsilon>0$ and the characteristics of $\phi$) such that for every $\bfA,\bfB\in\smash{\setR^{d \times d}_{\textup{sym}}}$ and $t\geq 0$, it holds that
	\begin{align}
		\smash{\phi_{\vert \bfB\vert}(t)}&\leq \smash{c_\varepsilon\, \phi_{\vert \bfA\vert}(t)
			+\varepsilon\, \vert \bfF(\bfB) - \bfF(\bfA)\vert ^2\,,}\label{lem:shift-change.1}
		\\
		\smash{\phi_{\vert \bfB\vert}(t)}&\leq \smash{c_\varepsilon\, \phi_{\vert \bfA\vert} (t)
			+\varepsilon\, \phi_{\vert \bfA\vert}(\vert \vert \bfB\vert - \vert \bfA\vert\vert )\,,}\label{lem:shift-change.2}
		\\
		\smash{(\phi_{\vert \bfB\vert})^*(t)}&\leq \smash{c_\varepsilon\, (\phi_{\vert \bfA\vert})^*(t)
			+\varepsilon\, \vert \bfF(\bfB) - \bfF(\bfA)\vert^2} \,,\label{lem:shift-change.3}
		\\
		\smash{(\phi_{\vert \bfB\vert})^*(t)}&\leq \smash{c_\varepsilon\, (\phi_{\vert \bfA\vert})^*(t)
			+\varepsilon\, \phi_{\vert \bfA\vert}(\vert \vert \bfB\vert - \vert \bfA\vert\vert )}\,.\label{lem:shift-change.4}
	\end{align}
\end{lemma}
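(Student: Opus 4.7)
My plan has two stages: first, reduce the vector-valued estimates \eqref{lem:shift-change.1} and \eqref{lem:shift-change.3} to the scalar-argument estimates \eqref{lem:shift-change.2} and \eqref{lem:shift-change.4}; second, establish the scalar inequalities by a case analysis based on the pointwise asymptotics of the shifted N-function.

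\textbf{Reduction.} Since $\bfA,\bfB \in \setR^{d\times d}_{\textup{sym}}$, Proposition~\ref{lem:hammer} gives $\phi_{|\bfA|}(|\bfA - \bfB|) \sim |\bfF(\bfA) - \bfF(\bfB)|^2$, while the reverse triangle inequality $||\bfA| - |\bfB|| \le |\bfA - \bfB|$, combined with the monotonicity of $\phi_{|\bfA|}$, yields $\phi_{|\bfA|}(||\bfA| - |\bfB||) \le \phi_{|\bfA|}(|\bfA - \bfB|)$. Substituting these into \eqref{lem:shift-change.2} produces \eqref{lem:shift-change.1}, and the identical substitution in \eqref{lem:shift-change.4} gives \eqref{lem:shift-change.3}. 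It therefore suffices to show, for all $a,b,t \ge 0$, the scalar inequalities
\[
\phi_b(t) \le c_\varepsilon\, \phi_a(t) + \varepsilon\, \phi_a(|a-b|), \qquad (\phi_b)^*(t) \le c_\varepsilon\, (\phi_a)^*(t) + \varepsilon\, \phi_a(|a-b|).
\]

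\textbf{Scalar estimates.} The starting point is the explicit equivalence $\phi_a(t) \sim (\delta + a + t)^{p-2}\, t^2$, and analogously $(\phi_a)^*(t) \sim ((\delta+a)^{p-1} + t)^{p'-2}\, t^2$, together with the uniform (in $a\ge 0$) $\Delta_2$-constants of $\phi_a$ and $(\phi_a)^*$. I would distinguish two regimes. In the \emph{small shift} regime $|a-b| \le \tfrac{1}{2}(\delta + a)$, we have $\delta + b \sim \delta + a$, hence $\phi_b(t) \sim \phi_a(t)$ and $(\phi_b)^*(t) \sim (\phi_a)^*(t)$ uniformly in $t$, and the claim follows with the second term on the right absent. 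In the \emph{large shift} regime $|a-b| > \tfrac{1}{2}(\delta + a)$, I further split at $t \le |a-b|$ versus $t > |a-b|$. For $t > |a-b|$ the factors $\delta + a + t$ and $\delta + b + t$ are still comparable, so the pointwise equivalence persists. For $t \le |a-b|$, monotonicity and the uniform $\Delta_2$ condition yield $\phi_b(t) \le \phi_b(|a-b|) \le C\, \phi_a(|a-b|)$, and the constant $C$ is absorbed by the $\varepsilon$ factor via an $\varepsilon$-Young reshuffle, at the price of enlarging $c_\varepsilon$.

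\textbf{Main obstacle.} The delicate point is the conjugate inequality \eqref{lem:shift-change.4} in the large-shift sub-case with $t \le |a-b|$: the right-hand side must feature the primal quantity $\phi_a(|a-b|)$ rather than $(\phi_a)^*(|a-b|)$. Bridging the primal and conjugate sides requires applying Young's inequality \eqref{ineq:young} to the pair $(\phi_a, (\phi_a)^*)$ at a split point chosen proportional to $\phi_a'(|a-b|)$, so that the resulting primal term lands on the target form $\phi_a(|a-b|)$. All constants must be tracked to depend only on $\varepsilon$ and on the characteristics of $\phi$, uniformly in $\delta \ge 0$ and $a,b \ge 0$; this is precisely where the uniform $\Delta_2$-bounds for the families $\{\phi_a\}_{a\ge 0}$ and $\{(\phi_a)^*\}_{a\ge 0}$ enter decisively. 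The detailed bookkeeping has been carried out in \cite{DK08,dr-nafsa}, which I would follow.
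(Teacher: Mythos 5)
The paper itself does not prove Lemma~\ref{lem:shift-change}; it simply cites \cite{DK08,dr-nafsa}, so there is no ``paper's proof'' for direct comparison, and your task is to supply one. Your \emph{reduction} step is sound: combining Proposition~\ref{lem:hammer}, the reverse triangle inequality, and monotonicity of $\phi_{|\bfA|}$ does let you deduce \eqref{lem:shift-change.1} from \eqref{lem:shift-change.2} and \eqref{lem:shift-change.3} from \eqref{lem:shift-change.4}, after a harmless relabeling of $\varepsilon$ that preserves the dependence structure of $c_\varepsilon$. The small-shift regime and the large-shift regime with $t>|a-b|$ are also correctly handled by comparability of the weights.

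However, there is a genuine gap in the large-shift sub-case $t\le|a-b|$. You bound $\phi_b(t)\le\phi_b(|a-b|)\le C\,\phi_a(|a-b|)$ with a fixed constant $C\ge 1$ and then assert that ``$C$ is absorbed by the $\varepsilon$ factor via an $\varepsilon$-Young reshuffle.'' There is nothing to Young-reshuffle: having discarded the $t$-dependence, you would need $C\,\phi_a(|a-b|)\le c_\varepsilon\,\phi_a(t)+\varepsilon\,\phi_a(|a-b|)$, which fails whenever $t$ is small and $\varepsilon<C$ since then $\phi_a(t)\to 0$ while $\phi_a(|a-b|)$ stays fixed. A concrete counterexample to the intermediate bound: $\delta=0$, $a=0$, $b=1$, $p=3$ gives $\phi_a(t)\sim t^3$, $\phi_b(t)\sim t^2+t^3$, $\phi_a(|a-b|)\sim 1$, and your estimate $\phi_b(t)\le C$ cannot be dominated by $c_\varepsilon t^3+\varepsilon$ for $t\to 0$ once $\varepsilon<C$. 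The correct argument in this sub-case must retain the factor $t^2$: use $\phi_b(t)\sim(\delta+b+t)^{p-2}t^2\lesssim|a-b|^{p-2}t^2$ (valid because $\delta+b+t\lesssim|a-b|$ here), and then prove directly that $|a-b|^{p-2}t^2\le c_\varepsilon(\delta+a+t)^{p-2}t^2+\varepsilon|a-b|^p$ for $t\le|a-b|$ by a scalar Young-type estimate on the product $|a-b|^{p-2}t^2$, distinguishing $t\le\sqrt\varepsilon\,|a-b|$ from $t>\sqrt\varepsilon\,|a-b|$; this yields $c_\varepsilon\sim\varepsilon^{-|p-2|/2}$. The same repair is needed for the conjugate estimate \eqref{lem:shift-change.4}, and only \emph{after} that do you invoke the $\varepsilon$-Young inequality \eqref{ineq:young} to trade $(\phi_a)^*$ for $\phi_a$, as you correctly anticipate in the last paragraph.
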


\subsection{The $p$-Navier--Stokes system} 
\hspace*{5mm}Let us briefly recall some well-known facts about the $p$-Navier--Stokes equations
\eqref{eq:p-navier-stokes}. For $p\in (1,\infty)$, we define the function spaces\\[-4.5mm]
\begin{align*}
	\Vo\coloneqq (W^{1,p}_0(\Omega))^d\,,\qquad
	\Qo\coloneqq L_0^{p'}(\Omega)\coloneqq \big\{z\in
	L^{p'}(\Omega)\;|\;\mean{z}_{\Omega}=0\big\}\,.\\[-6mm] 
\end{align*}
With this notation, assuming that $p\ge \frac{3d}{d+2}$, the weak formulation of the $p$-Navier--Stokes equations \eqref{eq:p-navier-stokes} as a non-linear saddle point problem is the following:

\textit{Problem (Q).}\hypertarget{Q}{} For given $\bff \in (L^{p'}(\Omega))^d$, find $(\bfv,q)\in \Vo \times \Qo$ such that  for all $(\bfz,z)^\top\in \Vo\times Q $, it holds\linebreak\hspace*{4.5mm} that
\begin{align}
	(\SSS(\bfD\bfv),\bfD\bfz)_\Omega+([\nabla\bfv]\bfv,\bfz)_\Omega-(q,\divo\bfz)_\Omega&=(\bff ,\bfz)_\Omega\label{eq:q1}\,,\\
	(\divo\bfv,z)_\Omega&=0\label{eq:q2}\,.
\end{align}
Alternatively, we can reformulate Problem (\hyperlink{Q}{Q}) \textit{``hiding''} the kinematic pressure.

\textit{Problem (P).}\hypertarget{P}{} For given $\bff \in (L^{p'}(\Omega))^d$, find $\bfv\in \Vo(0)$ 
such that for all $\bfz\in \Vo(0) $, it holds that
\begin{align}
	(\SSS(\bfD\bfv),\bfD\bfz)_\Omega+([\nabla\bfv]\bfv,\bfz)_\Omega&=(\bff ,\bfz)_\Omega\,,\label{eq:p}
\end{align}
\hspace*{5mm}where $\Vo(0)\coloneqq \{\bfz\in \Vo\mid \divo \bfz=0\}$.\\
The names \textit{Problem (\hyperlink{Q}{Q})} and \textit{Problem (\hyperlink{P}{P})} are traditional in the literature (\textit{cf}.\  \cite{BF1991,bdr-phi-stokes}).~The~well-posedness of Problem (\hyperlink{Q}{Q}) and Problem (\hyperlink{P}{P}) is usually established in two steps:
first, using pseudo-monotone operator theory (\textit{cf}.\ \cite{lions-quel}), the well-posedness of Problem (\hyperlink{P}{P}) is shown; then, given the well-posedness of Problem (\hyperlink{P}{P}), the well-posedness of Problem (\hyperlink{Q}{Q})
follows using DeRham's lemma. There holds the following regularity property of the
pressure if the velocity satisfies a natural regularity assumption.
\begin{lemma}\label{lem:pres}
	Let $\SSS$ satisfy Assumption~\ref{assum:extra_stress} with
	$p\in (2,\infty)$ and $\delta >0$, and let $(\bfv,q)^\top \in \Vo(0)\times \Qo$
	be a weak solution of Problem (\hyperlink{Q}{Q}) with $\bfF(\bfD \bfv) \in
	(W^{1,2}(\Omega))^{d\times d} $. Then, the following statements apply:
	\begin{itemize}[{(ii)}]
		\item[(i)] If $\bff \in  (L^{p'}(\Omega))^d$, then $\nabla q \in  (L^{p'}(\Omega))^d$.
		\item[(ii)] If $\bff \in (L^2(\Omega))^d$, then $(\delta
		+\vert\bfD\bfv\vert)^{2-p}\vert\nabla q\vert ^2 \in  L^1(\Omega)$.
	\end{itemize}
\end{lemma}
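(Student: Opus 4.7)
My plan is to use the momentum equation \eqref{eq:q1} tested against $\bfz\in (C_c^\infty(\Omega))^d$, which yields the distributional identity
\begin{align*}
\nabla q = \bff + \divo\SSS(\bfD\bfv) - [\nabla\bfv]\bfv \quad\text{in }\mathcal{D}'(\Omega)\,.
\end{align*}
Each assertion then reduces to showing that the three terms on the right-hand side lie in the respective target space.

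The key technical ingredient is the chain-rule bound
\begin{align*}
\vert\nabla\SSS(\bfD\bfv)\vert \lesssim (\delta+\vert\bfD\bfv\vert)^{(p-2)/2}\,\vert\nabla\bfF(\bfD\bfv)\vert\,,
\end{align*}
which follows from \eqref{assum:extra_stress.2}, the identity $\varphi'(t)/t=(\delta+t)^{p-2}$, and the well-known comparison $\vert\nabla\bfF(\bfD\bfv)\vert^2\sim(\delta+\vert\bfD\bfv\vert)^{p-2}\vert\nabla\bfD\bfv\vert^2$ available for $(p,\delta)$-structure (\textit{cf}.\ \cite{die-ett,dr-nafsa}).

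For assertion (i), the stress term is handled by H\"older's inequality with exponents $\tfrac{2p}{p-2}$ and $2$: the assumption $\bfF(\bfD\bfv)\in W^{1,2}(\Omega)^{d\times d}$ gives $(\delta+\vert\bfD\bfv\vert)\in L^p(\Omega)$ and $\nabla\bfF(\bfD\bfv)\in L^2(\Omega)$, and a direct computation with the chain-rule bound shows $\divo\SSS(\bfD\bfv)\in L^{p'}(\Omega)$. The convective term is controlled via Sobolev embedding: from $\bfF(\bfD\bfv)\in W^{1,2}(\Omega)^{d\times d}\hookrightarrow L^{2^*}(\Omega)^{d\times d}$ one obtains $\bfD\bfv\in L^{3p}(\Omega)^{d\times d}$ for $d=3$ (and any finite exponent for $d=2$); Korn's inequality transfers this regularity to $\nabla\bfv$, and since $3p>d$, the embedding $W^{1,3p}(\Omega)^{d}\hookrightarrow L^\infty(\Omega)^{d}$ gives $\bfv\in L^\infty(\Omega)^{d}$, so that $[\nabla\bfv]\bfv\in L^{3p}(\Omega)^{d}\subset L^{p'}(\Omega)^{d}$. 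Together with $\bff\in L^{p'}(\Omega)^d$, this completes (i).

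For assertion (ii), inserting the pointwise bound into the squared stress-divergence yields
\begin{align*}
(\delta+\vert\bfD\bfv\vert)^{2-p}\,\vert\divo\SSS(\bfD\bfv)\vert^2\lesssim \vert\nabla\bfF(\bfD\bfv)\vert^2\in L^1(\Omega)\,,
\end{align*}
which is precisely where the weight is genuinely exploited. The remaining two contributions are absorbed using $\delta>0$ and $2-p<0$, which force $(\delta+\vert\bfD\bfv\vert)^{2-p}\leq \delta^{2-p}$; thus $\bff\in L^2(\Omega)^d$ and $[\nabla\bfv]\bfv\in L^{3p}(\Omega)^d\subset L^2(\Omega)^d$ suffice to close (ii). The main obstacle is establishing the chain-rule bound on $\vert\nabla\SSS(\bfD\bfv)\vert$ rigorously, as $\SSS$ is only $C^1$ away from the origin; this is circumvented either by working with the smooth regularizations $\SSS_a$ from \eqref{eq:flux} for $a>0$ and passing to the limit $a\downarrow 0$ using Proposition~\ref{lem:hammer} and Remark~\ref{rem:sa}, or by invoking known composition results for $(p,\delta)$-structure tensors.
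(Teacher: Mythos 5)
Your reconstruction is sound and, as far as I can tell, follows the natural strategy; the paper itself does not supply a direct proof but simply cites \cite[Lem.~2.6]{kr-pnse-ldg-2}, so there is no in-text argument to compare against. Let me verify the key points of your argument.

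The distributional identity $\nabla q = \bff + \divo\SSS(\bfD\bfv) - [\nabla\bfv]\bfv$ is correctly obtained from \eqref{eq:q1} tested against $C_c^\infty$ fields, using the symmetry $(\SSS(\bfD\bfv),\bfD\bfz)_\Omega = (\SSS(\bfD\bfv),\nabla\bfz)_\Omega$. The chain-rule bound $\vert\nabla\SSS(\bfD\bfv)\vert\lesssim(\delta+\vert\bfD\bfv\vert)^{(p-2)/2}\vert\nabla\bfF(\bfD\bfv)\vert$ is also correct: \eqref{assum:extra_stress.2} gives $\vert\nabla\SSS(\bfD\bfv)\vert\lesssim(\delta+\vert\bfD\bfv\vert)^{p-2}\vert\nabla\bfD\bfv\vert$, and combining this with $\vert\nabla\bfF(\bfD\bfv)\vert^2\sim(\delta+\vert\bfD\bfv\vert)^{p-2}\vert\nabla\bfD\bfv\vert^2$ yields the stated inequality. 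Your H\"older split for (i) works exactly: $\frac{p-2}{2p}+\frac{1}{2}=\frac{p-1}{p}=\frac{1}{p'}$, and $(\delta+\vert\bfD\bfv\vert)^{(p-2)/2}\in L^{2p/(p-2)}(\Omega)$ follows from $\bfD\bfv\in L^p(\Omega)^{d\times d}$. The integrability chain for the convective term (via $\vert\bfD\bfv\vert^{p/2}\leq\vert\bfF(\bfD\bfv)\vert\in L^{2^*}$, then Korn, then $W^{1,3p}\hookrightarrow L^\infty$ because $3p>d$) is also correct for both $d=2$ and $d=3$. Part (ii) correctly exploits the exact cancellation $(\delta+\vert\bfD\bfv\vert)^{2-p}\cdot(\delta+\vert\bfD\bfv\vert)^{p-2}=1$ in the stress term, and the uniform bound $(\delta+\vert\bfD\bfv\vert)^{2-p}\leq\delta^{2-p}$ (since $p>2$, $\delta>0$) to absorb $\bff$ and the convective term.

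Your caveat about the chain rule being the one nontrivial technical point is fair and the two remedies you propose (regularization via $\SSS_a$, or citing composition results for $(p,\delta)$-structure) are both standard and adequate. One remark: since $\delta>0$, the bound \eqref{assum:extra_stress.2} actually yields a derivative bound that is \emph{locally bounded} on all of $\mathbb{R}^{d\times d}$, so $\SSS$ is locally Lipschitz and the composition $\SSS(\bfD\bfv)$ lies in $W^{1,1}_{\textup{loc}}$ with the expected chain rule by the Lipschitz chain rule for Sobolev functions; this makes the argument cleaner than in the degenerate case $\delta=0$ that you seemed to have in mind.
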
\vspace*{-5mm}
\begin{proof} See \cite[Lem.\ 2.6]{kr-pnse-ldg-2}.
\end{proof}

\newpage
\subsection{Discussion of Muckenhoupt regularity condition}\label{sec:reg-assumption}

\hspace*{5mm}In this subsection, we examine the \textit{Muckenhoupt regularity condition}
\begin{align}
	\mu_{\bfD\bfv} \coloneqq (\delta+\vert \overline{\bfD\bfv}\vert)^{p-2}\in A_2(\mathbb{R}^d)\,,\label{eq:reg-assumption}
\end{align}
for a solution $\bfv\in (W^{1,p}_0(\Omega))^d$ of Problem (\hyperlink{P}{P}) (or Problem (\hyperlink{Q}{Q}), respectively), where $\overline{\bfD\bfv}\in (L^p(\mathbb{R}^d))^{d\times d}$ is defined via\vspace*{-2mm}\enlargethispage{6mm}
\begin{align*}
	\overline{\bfD\bfv}\coloneqq\begin{cases}
		\bfD\bfv &\text{ a.e.\ in }\Omega\,,\\
		\bfzero  &\text{ a.e.\ in }\mathbb{R}^d\setminus\Omega\,.
	\end{cases}
\end{align*}
In this connection, recall that for given $p\in [1,\infty)$, a weight  $\sigma\colon \mathbb{R}^d\to (0,+\infty)$, \textit{i.e.}, $\sigma \in L^1_{\textup{loc}}(\mathbb{R}^d)$ and $0<\sigma(x)<+\infty$ for a.e.\ $x\in \mathbb{R}^d$, is said to satisfy the \textit{$A_p$-condition}, if
\begin{align*}
	[\sigma]_{A_p(\mathbb{R}^d)}\coloneqq 	\sup_{B\subseteq \mathbb{R}^d\,:\,B\text{ is a ball}}{\langle \sigma\rangle_B(\langle \sigma^{1-p'}\rangle_B)^{p-1}}<\infty\,.
\end{align*}
We denote by $A_p(\mathbb{R}^d)$ the \textit{class of all weights
	satisfying the $A_p$-condition} and use \textit{weighted Lebesgue
	spaces} $L^p(\Omega;\sigma)$ equipped with the norm
$\|\cdot\|_{p,\sigma,\Omega}\coloneqq  (\int_\Omega
\vert \cdot\vert^p\, \sigma \, \mathrm{d}x)^{1/p}$.

In two dimensions, the Muckenhoupt regularity condition \eqref{eq:reg-assumption} is satisfied under mild assumptions.

\begin{theorem}\label{thm:reg_two_dim}
	Let $\Omega\subseteq \mathbb{R}^2$  be a bounded domain with
        $C^2$-boundary, $p>2$, and $\bff \in (L^s(\Omega))^2$,~where ${s>2}$. Then, there exist $q>2$, $\alpha>0$, and a solution $(\bfv,q)^\top\in \Vo(0)\times \Qo$ of Problem (\hyperlink{Q}{Q}) with the following properties:\vspace*{-2mm}
	\begin{itemize}[{(iii)}]
		\item[(i)] $(\bfv,q)^\top\in (W^{2,q}(\Omega))^2\times W^{1,q}(\Omega)$;
		\item[(ii)] $(\bfv,q)^\top\in (C^{1,\alpha}(\overline{\Omega}))^2\times C^{0,\alpha}(\overline{\Omega})$.
	\end{itemize}
\end{theorem}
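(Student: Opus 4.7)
The plan is to obtain a weak solution by standard means and then bootstrap regularity via the sharp two-dimensional theorem of Kaplick\'y--M\'alek--Star\'a, followed by a linear Calder\'on--Zygmund argument on a $C^2$-domain.

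\emph{Step 1 (Existence).} Standard pseudo-monotone operator theory applied to Problem (\hyperlink{P}{P})---whose coercivity follows from Assumption~\ref{assum:extra_stress}, while in $d=2$ the subordination $p>2\ge \tfrac{3d}{d+2}=\tfrac{3}{2}$ tames the convective term---yields a solution $\bfv\in \Vo(0)$, and DeRham's lemma then produces a pressure $q\in \Qo$ so that $(\bfv,q)^\top$ solves Problem (\hyperlink{Q}{Q}).

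\emph{Step 2 ($C^{1,\alpha}$-regularity).} I would next invoke the planar regularity theorem of Kaplick\'y--M\'alek--Star\'a \cite{KMS2}: under the hypotheses $\partial\Omega\in C^2$, $p>2$, and $\bff\in (L^s(\Omega))^2$ with $s>2$, any weak solution of the stationary power-law system in 2D satisfies $\bfv\in (C^{1,\alpha}(\overline{\Omega}))^2$ for some $\alpha>0$. In particular, $\bfD\bfv\in (L^\infty(\Omega))^{2\times 2}_{\textup{sym}}$.

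\emph{Step 3 (Bootstrap to $W^{2,q}$).} Since $\bfD\bfv$ is bounded, Assumption~\ref{assum:extra_stress} implies that the fourth-order tensor $\bigl(\partial_{kl}S_{ij}(\bfD\bfv)\bigr)_{i,j,k,l}$ is uniformly elliptic and, using $\bfv\in C^{1,\alpha}$, H\"older continuous. Writing the momentum equation in non-divergence form, the pair $(\bfv,q)$ solves a linear generalized Stokes problem on a $C^2$-domain with right-hand side $\bff-[\nabla\bfv]\bfv$, where the convective term lies in $(L^\infty(\Omega))^2$ thanks to Step~2. Standard Calder\'on--Zygmund theory for Stokes systems with H\"older coefficients on smooth domains then delivers $\bfv\in (W^{2,q}(\Omega))^2$ and $q\in W^{1,q}(\Omega)$ for some $q>2$ (concretely, any $q\in (2,s]$ works).

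\emph{Step 4 (H\"older regularity of the pressure).} Part (ii) is an immediate consequence of the planar Sobolev embeddings $W^{2,q}(\Omega)\hookrightarrow C^{1,\alpha}(\overline{\Omega})$ and $W^{1,q}(\Omega)\hookrightarrow C^{0,\alpha}(\overline{\Omega})$, valid in $d=2$ for $q>2$ with $\alpha=1-\tfrac{2}{q}>0$.

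\textbf{Main obstacle.} The hard part is clearly Step~2: the boundary $C^{1,\alpha}$-regularity in two dimensions is a delicate theorem, relying on Caccioppoli-type estimates in the $\phi_a$-Orlicz scale, a planar embedding argument, and a careful boundary-straightening, all combined through a nonlinear Moser-type iteration. Once that is available, Steps~3 and~4 are essentially a linear bootstrap; the remaining bookkeeping is merely to verify that the precise hypotheses of the cited KMS2 theorem match those stated in Theorem~\ref{thm:reg_two_dim}.
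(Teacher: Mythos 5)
The paper's own ``proof'' is a single citation to Theorem~6.1 of Kaplick\'y--M\'alek--Star\'a \cite{KMS2}, which already delivers both conclusions (i) and (ii) simultaneously. Your reconstruction is therefore more elaborate than what is in the paper, and it also inverts the logical order that \cite{KMS2} (and most of the planar regularity literature) actually follows: there one first establishes second-order (weighted) differentiability via difference-quotient/Caccioppoli estimates, deduces $\nabla\bfv\in L^\infty$ and $W^{2,q}$-type bounds, and only \emph{then} extracts $C^{1,\alpha}$; you instead take $C^{1,\alpha}$ as the input and try to bootstrap up to $W^{2,q}$ afterwards. That order is not wrong, but two points need tightening before Step~3 is airtight. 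First, the claim that $\bigl(\partial_{kl}S_{ij}(\bfD\bfv)\bigr)$ is uniformly elliptic requires $\delta>0$; for $\delta=0$ and $p>2$ the linearisation degenerates at points where $\bfD\bfv=\mathbf{0}$, so the Schauder/Calder\'on--Zygmund machinery does not apply directly. Second, passing to non-divergence form presupposes second-order differentiability of $\bfv$, which is precisely what you are trying to prove; the clean fix is not to expand $\operatorname{div}\SSS(\bfD\bfv)$ but to \emph{freeze} the nonlinearity via the mean-value theorem, writing $\SSS(\bfD\bfv)=\tilde A(x)\,\bfD\bfv$ with $\tilde A(x)\coloneqq\int_0^1\partial\SSS(t\,\bfD\bfv(x))\,\mathrm dt$, which is $C^{0,\alpha}$ and uniformly elliptic when $\delta>0$ and $\bfD\bfv\in C^{0,\alpha}$, so that the divergence-form linear Stokes theory on a $C^2$-domain yields $W^{2,q}\times W^{1,q}$ without circularity. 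With those adjustments your route is valid, though it does not save any work relative to simply quoting the cited theorem for both items.
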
\vspace*{-5mm}

\begin{proof}
	See \cite[Thm. 6.1]{KMS2}.
\end{proof}

\begin{remark}\label{rem:muckenhoupt}
	\begin{itemize}[{(ii)}]
		\item[(i)] For $\vert \bfD\bfv\vert\in
                  L^\infty(\Omega)$, $\delta>0$,  and $p>2$, it follows $\delta^{p-2}\leq\mu_{\bfD\bfv}\leq (\delta+\|\bfD\bfv\|_{\infty,\Omega})^{p-2}$ a.e.\ in $\Omega$, 
		so that the Muckenhoupt regularity condition \eqref{eq:reg-assumption} is trivially satisfied. As a result, under the assumptions of Theorem \ref{thm:reg_two_dim}, the regularity assumption \eqref{eq:reg-assumption} is satisfied.
		\item[(ii)] We believe that it is possible to prove Theorem \ref{thm:reg_two_dim} without the $C^2$-boundary assumption for polygonal, convex domains.
	\end{itemize}
\end{remark}

The following result implies that, in three dimensions, one cannot hope for the regularity assumption to be satisfied, in general.

\begin{theorem}\label{thm:counter_example}
	Let $\Omega\subseteq \mathbb{R}^d$, $d\ge 2$,  be a bounded domain with $C^{1,1}$-boundary. Then, there exists a vector field $\bfv\colon \Omega\to \mathbb{R}^d$ with the following properties:\vspace*{-2mm}
	\begin{itemize}[{(iii)}]
		\item[(i)] $\bfv\in (W^{2,r}(\Omega))^d\cap (W^{1,s}_0(\Omega))^d$ for all $r\in (1,d)$ and $s\in (1,\infty)$;
		\item[(ii)] $\textup{tr}_{\partial\Omega}(\nabla \bfv) \not\equiv \bfzero$;
		\item[(iii)] $\divo \bfv=  0$ a.e.\ in $\Omega$;
		\item[(iv)] $\mu_{\bfD\bfv}\coloneqq(\delta+\vert \overline{\bfD\bfv}\vert)^{p-2}\notin A_2(\mathbb{R}^d)$ for all $p\in (1,\infty)\setminus\{2\}$ and $\delta\in [0,1]$.
	\end{itemize}
\end{theorem}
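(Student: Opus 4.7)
The plan is to construct an explicit divergence-free vector field $\bfv$ on $\Omega$ realising the prescribed Sobolev and boundary properties, and then to verify the failure of the Muckenhoupt $A_2$ condition for the associated weight $\mu_{\bfD\bfv}$ by testing against a suitable family of balls.

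\textbf{Construction and verification of (i)--(iii).} I would seek $\bfv=\bfv_{\textup{reg}}+\bfv_{\textup{sing}}$, where $\bfv_{\textup{reg}}$ is smooth, divergence-free, satisfies $\bfv_{\textup{reg}}|_{\partial\Omega}=\bfzero$ and has nontrivial $\partial_\nu\bfv_{\textup{reg}}$ on $\partial\Omega$ (e.g., obtained by a Bogovski{\u\i}-type lifting), and $\bfv_{\textup{sing}}$ is a compactly supported, divergence-free perturbation engineered so that $\bfD\bfv_{\textup{sing}}$ is essentially unbounded inside $\Omega$. A natural realisation in $2$D is a stream-function ansatz $\bfv_{\textup{sing}}=\nabla^\perp(\eta\phi)$ with a cutoff $\eta$ and profile $\phi(x)=|x-x_0|^2\log|x-x_0|$ for some $x_0\in\Omega$; in $3$D one uses the curl of an analogous vector potential. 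The borderline regularity $W^{2,r}$ for $r<d$ is produced by the $|x-x_0|^{-1}$-type singularity of $\nabla^2\bfv_{\textup{sing}}$, which sits in $L^r_{\textup{loc}}$ precisely for $r<d$, and Sobolev embedding yields $\bfv\in (W^{1,s}_0(\Omega))^d$ for every $s<\infty$. Property (ii) follows from the choice of $\bfv_{\textup{reg}}$ (and the fact that $\bfv_{\textup{sing}}$ is supported away from $\partial\Omega$), while (iii) is automatic for stream-function and curl constructions.

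\textbf{Verification of (iv).} For $\delta=0$ the failure of the $A_2$ condition is immediate: on the positive-measure set $\mathbb{R}^d\setminus\Omega$ the weight $\mu_{\bfD\bfv}$ either vanishes (if $p>2$), so that $\mu_{\bfD\bfv}^{-1}\equiv+\infty$ there, or equals $+\infty$ (if $p<2$), so that $\mu_{\bfD\bfv}\notin L^1_{\textup{loc}}$; in either case no ball $B\subseteq\mathbb{R}^d$ intersecting $\mathbb{R}^d\setminus\Omega$ produces a finite Muckenhoupt product. For $\delta\in(0,1]$ the argument is more delicate: one exhibits a sequence of balls $(B_n)$ along which $\langle\mu_{\bfD\bfv}\rangle_{B_n}\langle\mu_{\bfD\bfv}^{-1}\rangle_{B_n}\to+\infty$. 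The mechanism I have in mind is that two very different scales of $\mu_{\bfD\bfv}$ coexist on $B_n$: on one portion of $B_n$ (e.g., the part lying in $\mathbb{R}^d\setminus\Omega$, where $\mu_{\bfD\bfv}=\delta^{p-2}$, or a subregion in $\Omega$ away from the singularity) $\mu_{\bfD\bfv}$ is pinned between fixed positive constants, while on the other portion the unboundedness of $\bfD\bfv$ forces $\mu_{\bfD\bfv}$ to diverge (if $p>2$) or $\mu_{\bfD\bfv}^{-1}$ to diverge (if $p<2$); as long as both portions carry positive density-fraction in $B_n$, the product of averages blows up.

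\textbf{Main obstacle.} The principal difficulty is calibrating the singular profile of $\bfv_{\textup{sing}}$ so that the divergence of the Muckenhoupt product in (iv) occurs uniformly for \emph{all} $(p,\delta)\in((1,\infty)\setminus\{2\})\times(0,1]$ with a \emph{single} choice of $\bfv$. Simple isotropic power or logarithmic singularities at an interior point give rise to weights of power type which are generically still in $A_2$, so a more structured profile, exploiting the jump of $\overline{\bfD\bfv}$ across $\partial\Omega$ together with an interior degeneracy tuned to the Korn gap, is required. This precise calibration is exactly what the construction from \cite{kr-nekorn,kr-nekorn-add} is designed to provide, and I would rely on that construction to close the argument.
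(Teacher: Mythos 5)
Your proposal ultimately defers to exactly the same two references the paper cites, and the paper's entire proof \emph{is} that citation (``See \cite[Thm.\ 2.1]{kr-nekorn} and \cite[Thm.\ 2]{kr-nekorn-add}''), so you land in the same place. What differs is the intermediate construction you sketch, and you are right to flag at the end that it would not close~(iv) on its own; it is worth being more precise about \emph{why} it fails, because the construction in the cited works (previewed concretely in the paper's Section~4.2.3) is structurally quite different.

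The profile $\phi(x)=|x-x_0|^2\log|x-x_0|$ has Hessian of order $\log|x-x_0|$, which lies in $L^r_{\textup{loc}}$ for \emph{every} $r<\infty$, so it does not saturate the $W^{2,r}$, $r<d$, threshold you invoke; that would require a genuine $|x-x_0|^{-1}$-type Hessian singularity. More importantly, property~(i) forces $\nabla\bfv\in L^s(\Omega)$ for every $s<\infty$, which rules out any power-type singularity of $\nabla\bfv$ at an isolated point and leaves at most logarithmic growth. A weight of the form $(\delta+|\log|x-x_0||)^{p-2}$ around an isolated interior point is slowly varying and therefore lies in $A_2(\mathbb{R}^d)$ for every $\delta>0$ and every $p$: on concentric balls $B_r(x_0)$ one has $\langle w\rangle_{B_r}\langle w^{-1}\rangle_{B_r}=O(1)$ as $r\to 0$. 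Your two-scale mechanism for~(iv) correctly identifies that the test ball must simultaneously contain a region where $\mu_{\bfD\bfv}$ is pinned (e.g.\ the exterior of $\Omega$, where $\mu_{\bfD\bfv}=\delta^{p-2}$) and a region where it diverges, but your singular part is compactly supported at an interior point, so no ball can see both at once.

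The construction of \cite{kr-nekorn,kr-nekorn-add} avoids this by superposing infinitely many disjoint, compactly supported vortex bumps $\bfv^k$ on balls $B_{r^k}(\mathbf{m}^k)$ with geometrically shrinking radii and gradient amplitudes $\sim k$, the balls accumulating at a point of $\partial\Omega$; this is precisely what the paper implements numerically in Section~4.2.3. For a ball $B$ of radius comparable to $r^k$ centred near the accumulation point, a definite fraction of $B$ lies outside $\Omega$ (where $\mu_{\bfD\bfv}=\delta^{p-2}$) and another definite fraction lies in $B_{r^k}(\mathbf{m}^k)$ (where $\mu_{\bfD\bfv}\sim(\delta+k)^{p-2}$), so $\langle\mu_{\bfD\bfv}\rangle_B\langle\mu_{\bfD\bfv}^{-1}\rangle_B\gtrsim k^{|p-2|}$, which diverges as $k\to\infty$ uniformly for $\delta\in(0,1]$ and any fixed $p\neq 2$. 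The rapid decay $r^k\sim 2^{-k}$ is also what keeps $\nabla\bfv\in L^s$ for all $s<\infty$ despite the unbounded amplitudes. Your $\delta=0$ observation is correct but trivial, as it only uses that $\overline{\bfD\bfv}$ vanishes on the positive-measure set $\mathbb{R}^d\setminus\Omega$.
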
\vspace*{-5mm}

\begin{proof}
	See \cite[Thm. 2.1]{kr-nekorn} and \cite[Thm. 2]{kr-nekorn-add}.
\end{proof}

\begin{remark}
	Let $\bfv \colon \Omega\to \mathbb{R}^3$ be the vector field from Theorem \ref{thm:counter_example} in the case $d = 3$. Setting $\bff \coloneqq \divo(\bfv\otimes\bfv)-\divo \bfS(\bfD\bfv)$, where $\bfS(\bfD\bfv)\coloneqq\mu_{\bfD\bfv}\bfD\bfv$ and $\mu_{\bfD\bfv}=(\delta+\vert \bfD\bfv\vert)^{p-2}\in A_2(\mathbb{R}^3)$, we see that $\bfv$ and $q \equiv 0$ are a weak solution of the $p$-Navier--Stokes system \eqref{eq:p-navier-stokes} for any $p \in  (1, \infty)$ and $\delta\in [0,1]$. Due  to \cite[Rem.\ 2.3]{kr-nekorn} and \cite{kr-nekorn-add}, we have that $\bfF(\bfD\bfv)\in (W^{1,2}(\Omega))^{3\times 3}$ and $\bff \in (L^2(\Omega))^3$~for~all~$p \in (1, \infty)$.
\end{remark}

\section{Finite Element (FE) approximation}\label{sec:fe}

\subsection{Triangulations}

\hspace*{5mm}Throughout the entire paper, we denote by $\{\mathcal{T}_h\}_{h>0}$ a family of triangulations of $\Omega\subseteq \mathbb{R}^d$, $d\in\{2,3\}$, consisting of $d$-dimensional simplices (\textit{cf}.\ \cite{EG21}). 
Here, $h>0$ refers to the \textit{maximal~mesh-size}, \textit{i.e.}, if we set $h_K\coloneqq  \textup{diam}(K)$ for all $K\in \mathcal{T}_h$, then $h 
	\coloneqq \max_{K\in \mathcal{T}_h}{h_K}
$.
For every $K \in \mathcal{T}_h$,
we denote~by~$\rho_K>0$, the supremum of diameters of inscribed balls~contained~in~$K$. We assume that there is a constant~$\kappa_0>0$, independent of $h>0$, such that $\max_{K\in \mathcal{T}_h}{h_K}{\rho_K^{-1}}\le
\kappa_0$. The smallest such constant is called the \textit{chunkiness} of $\{\mathcal{T}_h\}_{h>0}$. For every $K\in \mathcal{T}_h$, the \textit{element patch} is defined via $\omega_K\coloneqq \bigcup\{K'\in\mathcal{T}_h\mid K'\cap K\neq \emptyset\}$.

\subsection{Finite element spaces and projectors}

\hspace{5mm}Given
$m \in \mathbb N_0$ and $h>0$, we denote by $\mathbb{P}^m(\mathcal{T}_h)$ the space of (possibly discontinuous) scalar functions that are polynomials of degree at most $m$ on each simplex $T\in \mathcal{T}_h$, and set $\mathbb{P}^m_c(\mathcal{T}_h)\coloneqq \mathbb{P}^m(\mathcal{T}_h)\cap C^0(\overline{\Omega})$.
Then, given $k\in\mathbb{N}$ and $\ell \in \mathbb N_0$, we~denote~by
\begin{align}
	\begin{aligned}
		V_h&\subseteq (\mathbb{P}^k_c(\mathcal{T}_h))^d\,, &&\,\Vo_h\coloneqq V_h\cap \Vo\,,\\
		Q_h&\subseteq \mathbb{P}^\ell(\mathcal{T}_h)\,, &&\Qo_h \coloneqq Q_h\cap \Qo\,,
	\end{aligned}
\end{align}
appropriate conforming
finite element spaces such that the following two assumptions are satisfied.

\begin{assumption}
	\label{ass:PiY}
	We assume that $\setR
	\subseteq Q_h$ and that there exists a linear projection operator
	$\Pi_h^Q\colon Q \to Q_h$ which is \textup{locally $L^1$-stable}, \textit{i.e.}, for every $q\in Q$ and $K\in \mathcal{T}_h$, it holds that
	\begin{align}
		\label{eq:PiYstab}
		\langle \vert \Pi_h^Q q\vert \rangle_K &\leq c\, \langle \vert q\vert \rangle_{\omega_K}\,.
	\end{align}
\end{assumption} 

\begin{assumption}[Projection operator $\Pi_h^{\smash{V}}$]\label{ass:proj-div}
	We assume that $\mathbb{P}^1_c(\pazocal{T}_h) \subseteq V_h$ and that there
	exists a linear projection operator $\Pi_h^{\smash{V}}\colon  V \to V_h$ with the following properties:
	\begin{itemize}[{(iii)}]
		\item[(i)] \textup{Preservation of divergence in the $Q_h^*$-sense:} For every $\bfz \in V$ and  $z_h \in Q_h$, it holds that
		\begin{align}
			\label{eq:div_preserving}
			(\divo \bfz,z_h)_\Omega &= (\divo\Pi_h^{\smash{V}}
			\bfz,z_h)_\Omega \,;
		\end{align}
		\item[(ii)] \textup{Preservation of zero boundary values:} It holds $\Pi_h^{\smash{V}}(\Vo) \subseteq \Vo_h$;
		\item[(iii)] \textup{Local $L^1$-$W^{1,1}$-stability:} For every $\bfz \in V$ and $K\in \mathcal{T}_h$, it holds that
		\begin{align}
			\label{eq:Pidivcont}
			\langle \vert \Pi_h^{\smash{V}}\bfz\vert \rangle_K &\leq c\,(\langle \vert \bfz\vert \rangle_{\omega_K} +h_K\, \langle \vert \nabla\bfz\vert \rangle_{\omega_K} )\,.
		\end{align}
	\end{itemize}
\end{assumption}

Next, we present a short list of common mixed finite element spaces $\{V_h\}_{h>0}$ and $\{Q_h\}_{h>0}$ with projections $\{\Pi_h^{\smash{V}}\}_{h>0}$~and~$\{\Pi_h^{\smash{Q}}\}_{h>0}$ on regular triangulations $\{\mathcal{T}_h\}_{h>0}$ satisfying both  Assumption~\ref{ass:PiY} and Assumption~\ref{ass:proj-div}; for a detailed presentation, we recommend the textbook \cite{BBF13}.

\begin{remark}\label{FEM.Q}
	The following discrete spaces and projectors satisfy Assumption~\ref{ass:PiY}:
	\begin{itemize}[{(ii)}]
		\item[(i)] If $Q_h= \mathbb{P}^\ell(\mathcal{T}_h)$ for some $\ell\ge 0$, then $\Pi_h^Q$ can be chosen as (local) $L^2$-projection operator or, more generally, a Cl\'ement type quasi-interpolation operator.
		
		\item[(ii)] If  $Q_h=\mathbb{P}^\ell_c(\mathcal{T}_h)$  for  some  $\ell\ge 1$,  then  $\Pi_h^Q$  can  be  chosen  as  a  Cl\'ement  type  quasi-interpolation operator.
		
	\end{itemize}
\end{remark}

\begin{remark}\label{FEM.V} 
	The following discrete spaces  and projectors satisfy Assumption~\ref{ass:proj-div}:
	\begin{itemize}[{(iii)}]
		\item[(i)] The \textup{MINI element} for $d\in \{2,3\}$, \textit{i.e.}, $X_h=(\mathbb{P}^1_c(\mathcal{T}_h)\bigoplus\mathbb{B}(\mathcal{T}_h))^d$, where $\mathbb{B}(\mathcal{T}_h)$ is the bubble function space, and $Q_h=\mathbb{P}^1_c(\mathcal{T}_h)$, introduced in \cite{ABF84} for $d=2$; see also \cite[Chap.\ II.4.1]{GR86} and \cite[Sec.\ 8.4.2, 8.7.1]{BBF13}. A proof of Assumption \ref{ass:proj-div}
		is given in \cite[Appx.\ A.1]{bdr-phi-stokes}; see also \cite[Lem.\ 4.5]{GL01}~or~\mbox{\cite[p.~990]{DKS13}}.
		
		\item[(ii)] The \textup{Taylor--Hood element} for $d\in\{2,3\}$, \textit{i.e.}, $X_h=(\mathbb{P}^2_c(\mathcal{T}_h))^d$ and $Q_h=\mathbb{P}^1_c(\mathcal{T}_h)$, introduced in \cite{TH73} for $d=2$; see  also \cite[Chap.\ II.4.2]{GR86}, and its generalizations; see, \textit{e.g.}, \cite[Sec.\ 8.8.2]{BBF13}. A proof
		of Assumption \ref{ass:proj-div} is given in \cite[Thm.~3.1,~3.2]{GS03}.
		
		\item[(iii)] The \textup{conforming Crouzeix--Raviart element} for $d=2$, \textit{i.e.}, $V_h=(\mathbb{P}^2_c(\mathcal{T}_h)\bigoplus\mathbb{B}(\mathcal{T}_h))^2$ and $Q_h=\mathbb{P}^1(\mathcal{T}_h)$, introduced in \cite{CR73}; see also \cite[Ex.\ 8.6.1]{BBF13}. An operator $\Pi_h^V$ satisfying Assumption~\ref{ass:proj-div}(i) is given in \cite[p.\ 49]{CR73} and it can be shown to satisfy Assumption \ref{ass:proj-div}(ii); see, \textit{e.g.}, \cite[Thm.\ 3.3]{GS03}.
		
		\item[(iv)] The {\textup{first order Bernardi--Raugel element}} for $d\in  \{2,3\}$, \textit{i.e.}, 
		$V_h=(\mathbb{P}^1_c(\mathcal{T}_h)\bigoplus\mathbb{B}_{\tiny \mathscr{F}}(\mathcal{T}_h))^d$, where $\mathbb{B}_{\tiny \mathscr{F}}(\mathcal{T}_h)$ is the facet bubble function space, and $Q_h=\mathbb{P}^0(\mathcal{T}_h)$, introduced in \cite[Sec.\ II]{BR85}. For $d=2$ is often  referred to as \textup{reduced $\mathbb{P}^2$-$\mathbb{P}^0$-element} or as \textup{2D SMALL element}; see,  \textit{e.g.},  \cite[Rem.\ 8.4.2]{BBF13} and \cite[Chap.\ II.2.1]{GR86}.  An operator $\Pi_h^V$ satisfying Assumption \ref{ass:proj-div} is given in
		\cite[Sec.\ II.]{BR85}.
		
		\item[(v)] The \textup{second order Bernardi--Raugel element} for $d=3$, introduced in \cite[Sec.\ III]{BR85}; see also \cite[Ex.\ 8.7.2]{BBF13} and \cite[Chap.\ II.2.3]{GR86}. An operator $\Pi_h^V$ satisfying Assumption \ref{ass:proj-div} is given in \cite[Sec.\ III.3]{BR85}.
	\end{itemize}
\end{remark}

\subsection{FE formulations}

\hspace*{5mm}Appealing to \cite{JK23_inhom}, the FE formulation of Problem (\hyperlink{Q}{Q}) reads:

\textit{Problem (Q$_h$).}\hypertarget{Qhfe}{} For given $\bff  \in (L^{p'}(\Omega))^d$, find $(\bfv_h,q_h)^\top \in \Vo_h\times \Qo_h$ such that for every $(\bfz_h,z_h)^\top\in $ \linebreak\hspace*{4.5mm} $\Vo_h\times Q_h$, it holds that\vspace*{-1mm}\enlargethispage{10mm}
\begin{align}
	\label{eq:FE_Qh}
	\begin{aligned}
		(\bfS(\bfD \bfv_h),\bfD \bfz_h)_\Omega +b(\bfv_h, \bfv_h, \bfz_h)_\Omega- (\divo\bfz_h,q_h)_\Omega &=
		(\bff,\bfz_h)_\Omega \,,
		\\
		(\divo\bfv_h,z_h)_\Omega &= 0\,,\\[-1mm]
	\end{aligned}
\end{align}
where \textit{Temam's modification} of the convective term (\textit{cf}.\ \cite{tem}) $b\colon [\Vo]^3\to \mathbb{R}$, for every $(\bfu,\bfw,\bfz)^\top \in [\Vo]^3$, is defined via\vspace*{-1mm}
\begin{align*}
	b(\bfu,\bfw,\bfz)\coloneqq \tfrac{1}{2}(\bfz\otimes \bfu,\nabla \bfw)_{\Omega}-\tfrac{1}{2}(\bfw\otimes\bfu,\nabla \bfz)_{\Omega}\,.\\[-7mm]
\end{align*}
The FE formulation of Problem (\hyperlink{P}{P}) reads:

\textit{Problem (P$_h$).}\hypertarget{Phfe}{} For given $\bff  \in (L^{p'}(\Omega))^d$, find $\bfv_h\in \Vo_h(0)$ such that for every $\bfz_h\in \Vo_h$, it holds that\vspace*{-1mm}
\begin{align}	\label{eq:FE_Ph}
	(\bfS(\bfD \bfv_h),\bfD \bfz_h)_\Omega +b(\bfv_h, \bfv_h, \bfz_h)_\Omega= (\bff,\bfz_h)_\Omega\,,\\[-7mm]\notag
\end{align}
\hspace*{5mm}where\vspace*{-1mm}
\begin{align*}
	\Vo_h(0)\coloneqq \{\bfz_h\in \Vo_h\mid (\divo\bfz_h,z_h)_\Omega=0\text{ for all }z_h\in Q_h\}\,.\\[-7mm]
\end{align*} 
The existence of a weak solution of the problem Problem (\hyperlink{Qhfe}{Q$_h$}) and Problem (\hyperlink{Phfe}{P$_h$}) 
can be established as in the continuous case in two steps: first, the well-posedness of Problem (\hyperlink{Phfe}{P$_h$})  is shown using pseudo-monotone operator theory; second, given the well-posedness of Problem (\hyperlink{Phfe}{P$_h$}), the well-posedness of Problem (\hyperlink{Qhfe}{Q$_h$})  follows using the following discrete inf-sup stability result:

\begin{lemma}\label{lem:ismd}
	Let Assumption \ref{ass:proj-div} be satisfied. Then, for every $q_h\in \Qo_h$, it holds that
	\begin{align*}
		c\,	\|q_h\|_{p',\Omega}\leq \sup_{\bfz_h\in \Vo_h\,:\,\|\nabla\bfz_h\|_{p,\Omega}\leq 1}{(q_h,\divo\bfz_h)_\Omega}\,,
	\end{align*}
	where $c$ depends only on $d$, $m$, $\ell$, $p$, $\kappa_0$, and $\Omega$.
\end{lemma}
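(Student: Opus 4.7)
The plan is to reduce the discrete inf-sup inequality to the continuous one by means of the Fortin-type projector $\Pi_h^{\smash{V}}$ supplied by Assumption~\ref{ass:proj-div}. This is the standard abstract Fortin strategy, but the $W^{1,p}$-stability of $\Pi_h^{\smash{V}}$ is only encoded indirectly (through the local $L^1$-$W^{1,1}$-bound \eqref{eq:Pidivcont}), which is where the main work lies.

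First, I would recall the continuous inf-sup stability for the pair $(\Vo,\Qo)=((W^{1,p}_0(\Omega))^d, L^{p'}_0(\Omega))$, which follows from Ne\v{c}as' theorem (equivalently, the existence of a Bogovski\u{\i} operator on a bounded Lipschitz domain): for every $q_h \in \Qo_h\subseteq \Qo$, there exists $\bfz\in \Vo$ with $\|\nabla\bfz\|_{p,\Omega}\le 1$ such that
\begin{align*}
(q_h,\divo\bfz)_\Omega \ge c_1\,\|q_h\|_{p',\Omega},
\end{align*}
where $c_1>0$ depends only on $p$, $d$, and $\Omega$.

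Next, I would set $\bfz_h\coloneqq \Pi_h^{\smash{V}}\bfz\in \Vo_h$, which is well-defined by Assumption~\ref{ass:proj-div}(ii). Since $q_h\in Q_h$, the divergence-preservation property \eqref{eq:div_preserving} immediately yields
\begin{align*}
(q_h,\divo\bfz_h)_\Omega = (q_h,\divo\Pi_h^{\smash{V}}\bfz)_\Omega = (q_h,\divo\bfz)_\Omega \ge c_1\,\|q_h\|_{p',\Omega}.
\end{align*}
It then remains to show that $\|\nabla\bfz_h\|_{p,\Omega}\le C$ with $C$ independent of $h$; dividing by this quantity produces a normalized test function in $\Vo_h$ and gives the claim with $c=c_1/C$.

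The main obstacle, and the only nontrivial step, is the global $W^{1,p}$-stability $\|\nabla\Pi_h^{\smash{V}}\bfz\|_{p,\Omega}\le C\,\|\nabla\bfz\|_{p,\Omega}$ for all $\bfz\in\Vo$, which has to be extracted from the local bound \eqref{eq:Pidivcont}. The standard recipe I would follow is: on each $K\in \pazocal{T}_h$, exploit that $\mathbb{P}^1_c(\pazocal{T}_h)\subseteq V_h$ (Assumption~\ref{ass:proj-div}) so that $\Pi_h^{\smash{V}}$ reproduces affine functions, then for a local best affine approximation $\bfp_K\in\mathbb{P}^1$ on $\omega_K$ write $\Pi_h^{\smash{V}}\bfz = \bfp_K + \Pi_h^{\smash{V}}(\bfz-\bfp_K)$, apply the inverse estimate $\|\nabla \Pi_h^{\smash{V}}(\bfz-\bfp_K)\|_{p,K}\lesssim h_K^{-1}\|\Pi_h^{\smash{V}}(\bfz-\bfp_K)\|_{p,K}$, then use the local $L^p$-version of \eqref{eq:Pidivcont} (obtained from the $L^1$-version together with the finite-dimensional norm-equivalence on $V_h|_K$), and finally apply a Bramble--Hilbert/Poincar\'e estimate on the patch $\omega_K$ to bound $h_K^{-1}\|\bfz-\bfp_K\|_{p,\omega_K}+\|\nabla(\bfz-\bfp_K)\|_{p,\omega_K}\lesssim \|\nabla\bfz\|_{p,\omega_K}$. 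Summing over $K\in \pazocal{T}_h$ and using finite overlap of the patches $\omega_K$ (controlled by $\kappa_0$) yields the global $W^{1,p}$-stability. Combining this bound with the computation above completes the proof. All implicit constants depend only on $d$, $m$, $\ell$, $p$, $\kappa_0$, and $\Omega$, as claimed.
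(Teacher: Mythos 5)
Your proof is correct and reproduces the argument behind the reference the paper cites for this lemma, \cite[Lem.\ 4.1]{bdr-phi-stokes}; the paper itself gives no independent proof. The only step worth flagging is that passing from the local $L^1$-$W^{1,1}$-stability \eqref{eq:Pidivcont} to the global $W^{1,p}$-stability of $\Pi_h^{\smash{V}}$ is itself a non-trivial intermediate result --- it is precisely \cite[Cor.\ 4.8]{dr-interpol}, which the paper later invokes as a black box in the proof of Lemma~\ref{lem:shifted_modular_Pi_div} --- and your reconstruction of it (reproduction of affines via $\mathbb{P}^1_c(\pazocal{T}_h)\subseteq V_h$, inverse estimate, upgrade of \eqref{eq:Pidivcont} to a local $L^p$-bound by finite-dimensional norm equivalence, Bramble--Hilbert on the patch $\omega_K$, finite overlap) is exactly the intended route, so there is no gap.
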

\begin{proof}
	See \cite[Lem. 4.1]{bdr-phi-stokes}.
\end{proof}

\newpage
\subsection{A priori error estimates}

\hspace*{5mm}In order to derive an \textit{a priori} error estimate for the pressure, it is first necessary to derive \textit{a priori} error estimates for the velocity vector field. Fortunately, both has already been done recently in the contribution  \cite{JK23_inhom}, whose main results are summarized in the following theorem:

\begin{theorem}
	\label{thm:error_FE}
	Let $\SSS$ satisfy Assumption~\ref{assum:extra_stress} with
	$p\in(2,\infty)$ and $\delta> 0$. Moreover, let $\bff\in (L^{p'}(\Omega))^d$ and 
	$\bfF(\bfD \bfv) \in (W^{1,2}(\Omega))^{d\times d}$.   Then,
	there exists a constant $c_0 >0$, depending on the characteristics of
	$\SSS$, $\delta^{-1}$, $\kappa_0$, $m$, $\ell$, and $\Omega$, such that if $\norm{\nabla
		\bfv}_2\le c_0$, then, it holds that
	\begin{align*}
		\|\bfF(\bfD \bfv_h)-\bfF(\bfD
          \bfv)\|_{2,\Omega}^2+\| q_h-q\|_{p',\Omega}^2
          &\leq c\, h^2\, \big (\|\nabla	\bfF(\bfD \bfv)
            \|_{2,\Omega}^2 + \|\nabla q\|_{p',\Omega}^2\big )+c\,\rho_{(\phi_{\abs{\bfD \bfv}})^*,\Omega}(h\nabla q)\,,
	\end{align*}
	where $c>0$ depends only on the characteristics of
	$\SSS$, $\delta^{-1}$, $\kappa_0$,  $m$, $\ell$, $\Omega$, $\|\bfv	\|_{\infty,\Omega}$,  $\|\bfD\bfv	\|_{p,\Omega}$, and $c_0$.
\end{theorem}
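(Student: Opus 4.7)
The plan is to start from the Galerkin orthogonality between Problem~(\hyperlink{Q}{Q}) and Problem~(\hyperlink{Qhfe}{Q$_h$}), decompose the velocity and pressure errors via the canonical projections of Assumptions~\ref{ass:PiY}--\ref{ass:proj-div}, exploit the monotonicity structure of Proposition~\ref{lem:hammer}, and handle the nonlinear residuals with the shift-change machinery of Lemma~\ref{lem:shift-change}. The pressure bound is then read off from the discrete inf-sup of Lemma~\ref{lem:ismd}.

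\emph{Velocity estimate.} Set $\eta\coloneqq \bfv-\Pi_h^{V}\bfv$, $\bfe_h\coloneqq \bfv_h-\Pi_h^{V}\bfv\in \Vo_h$, $\xi\coloneqq q-\Pi_h^{Q} q$, and $e_h\coloneqq q_h-\Pi_h^{Q} q$. Subtracting Problem~(\hyperlink{Qhfe}{Q$_h$}) from Problem~(\hyperlink{Q}{Q}) tested against $\bfz_h\in \Vo_h\subseteq \Vo$, and using $\divo \bfv=0$ to identify $([\nabla\bfv]\bfv,\bfz_h)_\Omega$ with $b(\bfv,\bfv,\bfz_h)$, yields
\[
(\SSS(\bfD\bfv_h)-\SSS(\bfD\bfv),\bfD\bfz_h)_\Omega+\bigl(b(\bfv_h,\bfv_h,\bfz_h)-b(\bfv,\bfv,\bfz_h)\bigr)-(\divo\bfz_h,q_h-q)_\Omega=0.
\]
Assumption~\ref{ass:proj-div}(i) gives $\bfe_h\in \Vo_h(0)$, so tested with $\bfz_h=\bfe_h$ the pressure contribution reduces to $(\divo\bfe_h,\xi)_\Omega$. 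The diagonal part $(\SSS(\bfD\bfv_h)-\SSS(\bfD\bfv),\bfD(\bfv_h-\bfv))_\Omega$ is equivalent to $\|\bfF(\bfD\bfv_h)-\bfF(\bfD\bfv)\|_{2,\Omega}^2$ via~\eqref{eq:hammera}. The consistency term $(\SSS(\bfD\bfv_h)-\SSS(\bfD\bfv),\bfD\eta)_\Omega$ is treated by $\epsilon$-Young~\eqref{ineq:young} relative to $\phi_{|\bfD\bfv|}$: the dual piece yields an $\epsilon$-multiple of $\|\bfF(\bfD\bfv_h)-\bfF(\bfD\bfv)\|_{2,\Omega}^2$ by~\eqref{eq:hammera} (absorbed on the left), while the primal piece $\rho_{\phi_{|\bfD\bfv|},\Omega}(\bfD\eta)$ is bounded by $c h^2\|\nabla\bfF(\bfD\bfv)\|_{2,\Omega}^2$ via Lemma~\ref{lem:shift-change}, the local stability~\eqref{eq:Pidivcont}, and patch-based polynomial best-approximation. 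The pressure piece $(\divo\bfe_h,\xi)_\Omega$ is handled analogously with $(\phi_{|\bfD\bfv|})^*$ and Assumption~\ref{ass:PiY}, producing the contribution $c\,\rho_{(\phi_{|\bfD\bfv|})^*,\Omega}(h\nabla q)$. The convective difference is split as $b(\bfv_h-\bfv,\bfv_h,\bfe_h)+b(\bfv,\bfv_h-\bfv,\bfe_h)$ and controlled by Hölder, Sobolev embedding, and boundedness of $\|\bfv\|_{\infty,\Omega}$ and $\|\bfD\bfv\|_{p,\Omega}$; the smallness $\|\nabla\bfv\|_{2,\Omega}\leq c_0$ renders the remaining factor absorbable into the coercivity.

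\emph{Pressure estimate and principal obstacle.} By Lemma~\ref{lem:ismd} applied to $e_h\in \Qo_h$, $\|e_h\|_{p',\Omega}$ is controlled by $\sup_{\bfz_h}(e_h,\divo\bfz_h)_\Omega/\|\nabla\bfz_h\|_{p,\Omega}$. Writing $e_h=(q_h-q)+\xi$ and applying the orthogonality, the numerator equals $-(\SSS(\bfD\bfv_h)-\SSS(\bfD\bfv),\bfD\bfz_h)_\Omega-\bigl(b(\bfv_h,\bfv_h,\bfz_h)-b(\bfv,\bfv,\bfz_h)\bigr)+(\xi,\divo\bfz_h)_\Omega$. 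Each summand is bounded by~\eqref{eq:hammere}, Hölder's inequality under the normalization $\|\nabla\bfz_h\|_{p,\Omega}\leq 1$, and the velocity estimate already obtained; squaring and combining with $\|\xi\|_{p',\Omega}^2\leq c h^2\|\nabla q\|_{p',\Omega}^2$ from standard polynomial approximation closes the bound. The main obstacle is structural: the natural Orlicz-dual norm governing the pressure residual is the shifted $(\phi_{|\bfD\bfv|})^*$-norm, whereas the target norm is $L^{p'}(\Omega)$; squaring to bridge the two is precisely the lossy step producing the sub-optimality for $p>2$, and it is exactly this loss that the subsequent Muckenhoupt-based refinement is designed to circumvent.
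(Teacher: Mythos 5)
The paper does not contain an in-house proof of Theorem~\ref{thm:error_FE}: its ``proof'' is a one-line citation to \cite{JK23_inhom} together with the observation that $s(p)=p$ for $p>2$. Your sketch is therefore not comparable with a proof in this paper but is a blind reconstruction of the argument in the cited reference. As such it does capture the canonical machinery for FE \textit{a priori} error analysis of systems with $(p,\delta)$-structure: Galerkin orthogonality between Problem~(\hyperlink{Q}{Q}) and Problem~(\hyperlink{Qhfe}{Q$_h$}); the error decomposition via $\Pi_h^{V}$ and $\Pi_h^{Q}$ with $\bfe_h=\bfv_h-\Pi_h^{V}\bfv\in\Vo_h(0)$ coming from Assumption~\ref{ass:proj-div}(i); coercivity via \eqref{eq:hammera}; $\varepsilon$-Young with the shifted N-function and Lemma~\ref{lem:shift-change}; modular best-approximation for $\rho_{\phi_{|\bfD\bfv|},\Omega}(\bfD\eta)$ and $\rho_{(\phi_{|\bfD\bfv|})^*,\Omega}(\xi)$; and finally the discrete inf-sup of Lemma~\ref{lem:ismd} to pass from the stress error to the squared $L^{p'}$-pressure error. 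You also correctly pinpoint the ``lossy squaring'' as the structural reason for the sub-optimality that the rest of the paper is designed to repair.

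One point is underspecified in a way that matters for the smallness hypothesis. Your split of the convective difference,
\begin{align*}
b(\bfv_h,\bfv_h,\bfe_h)-b(\bfv,\bfv,\bfe_h)=b(\bfv_h-\bfv,\bfv_h,\bfe_h)+b(\bfv,\bfv_h-\bfv,\bfe_h)\,,
\end{align*}
places $\bfv_h$ in the \emph{middle} slot of the first summand, so the Temam antisymmetry $b(\bfu,\bfw,\bfw)=0$ does not cancel the diagonal directly. After expanding $\bfv_h-\bfv=\bfe_h-\eta$, one is left with $b(\bfe_h,\bfv_h,\bfe_h)$; only after a further split $\bfv_h=\bfe_h+\Pi_h^{V}\bfv$ and a second application of $b(\bfu,\bfw,\bfw)=0$ does the dangerous term reduce to $b(\bfe_h,\Pi_h^{V}\bfv,\bfe_h)$, whose absorption requires the $W^{1,2}$-stability of $\Pi_h^{V}$ (\cite[Cor.~4.8]{dr-interpol}) to convert $\|\nabla\Pi_h^{V}\bfv\|_{2,\Omega}$ into $\|\nabla\bfv\|_{2,\Omega}\le c_0$. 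A cleaner route (and the one consistent with the stated smallness on $\bfv$ alone) is the split $b(\bfv_h,\bfv_h-\bfv,\bfe_h)+b(\bfv_h-\bfv,\bfv,\bfe_h)$, which cancels $b(\bfv_h,\bfe_h,\bfe_h)$ immediately and isolates the genuinely dangerous term as $b(\bfe_h,\bfv,\bfe_h)$, controlled outright by $\|\nabla\bfv\|_{2,\Omega}\le c_0$. Your argument can be repaired as indicated, so I would not call this a gap that breaks the proof, but as written the assertion that ``the smallness of $\|\nabla\bfv\|_{2,\Omega}$ renders the remaining factor absorbable'' is not immediate from your decomposition without these extra steps.
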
\vspace*{-5mm}
\begin{proof}
  The assertion follows from \cite[Theorem 4.2, Theorem
  4.4]{JK23_inhom} and $s(p)=p$ for $p>2$ in \cite{JK23_inhom}.
\end{proof}

\begin{corollary}\label{cor:error_FE}
	Let the assumptions of Theorem \ref{thm:error_FE} be satisfied. Then,~it~holds~that
	\begin{align*}
		\|\bfF(\bfD \bfv_h) - \bfF(\bfD
		\bfv)\|_{2,\Omega}^2 +
		\| q_h-q\|_{p',\Omega}^2\leq  c\, h^2\, \big (\|\nabla	\bfF(\bfD \bfv)
            \|_{2,\Omega}^2 + \|\nabla q\|_{p',\Omega}^2\big )+c\,h^{p'}\rho_{(\phi_{\smash{\vert \bfD\bfv\vert}})^*,\Omega}(\nabla q)\,,
	\end{align*}
	where $c>0$ depends on the characteristics of
	$\SSS$, $\delta^{-1}$, $\kappa_0$,  $m$, $\ell$, $\Omega$, $\|\bfv
	\|_{\infty,\Omega}$,  $\|\bfD\bfv	\|_{p,\Omega}$, and $c_0$. If $\bff \in (L^{2}(\Omega))^d$, then
	\begin{align*}
		\|\bfF(\bfD \bfv_h) - \bfF(\bfD
		\bfv)\|_{2,\Omega}^2 +
		\| q_h-q\|_{p',\Omega}^2\leq  c\, h^2  \,\big( \|\nabla\bfF(\bfD \bfv)
		\|_{2,\Omega}^2 + \|\nabla q\|_{p',\Omega}^2+\|\nabla q\|_{2,\mu_{\bfD\bfv}^{-1},\Omega}^2
		\big)\,,
	\end{align*}
	where $\mu_{\bfD\bfv}\hspace*{-0.175em}\coloneqq \hspace*{-0.175em}(\delta+\vert \overline{\bfD\bfv}\vert)^{p-2}$ and $c\hspace*{-0.175em}>\hspace*{-0.175em}0$ depends on the characteristics of
	$\SSS$, $\delta^{-1}$, $\kappa_0$,  $m$, $\ell$, $\Omega$,~$\|\bfv
	\|_{\infty,\Omega}$,  $\|\bfD\bfv	\|_{p,\Omega}$,~and~$c_0$.
\end{corollary}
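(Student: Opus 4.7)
The plan is to apply Theorem \ref{thm:error_FE} directly and then replace the residual modular term $\rho_{(\phi_{\smash{\vert \bfD\bfv\vert}})^*,\Omega}(h\nabla q)$ by two different bounds corresponding to the two assertions of the corollary. The common tool is the pointwise equivalence
\begin{align*}
(\phi_a)^*(s)\sim \big((\delta+a)^{p-1}+s\big)^{p'-2}s^2\,,\qquad a,s\geq 0\,,
\end{align*}
which follows from the $(p,\delta+a)$-structure of $\phi_a$ (\textit{cf.}\ Remark \ref{rem:sa}) and which behaves like $(\delta+a)^{2-p}s^2$ for $s\lesssim(\delta+a)^{p-1}$ and like $s^{p'}$ for $s\gtrsim(\delta+a)^{p-1}$.

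For the \emph{first bound}, I would scale the factor $h$ out of the modular using the upper growth exponent $p'$ of $(\phi_a)^*$. A three-case analysis based on whether $t$ and $\lambda t$ lie above or below the threshold $(\delta+a)^{p-1}$ shows that, for every $\lambda\in[0,1]$, $t\geq 0$, $a\geq 0$,
\begin{align*}
(\phi_a)^*(\lambda t)\leq c\,\lambda^{p'}\,(\phi_a)^*(t)\,,
\end{align*}
with $c$ independent of $a$. The cases where $t,\lambda t$ sit on the same side of the threshold are immediate; in the intermediate regime $\lambda t\leq(\delta+a)^{p-1}<t$, the constraint $t\leq(\delta+a)^{p-1}/\lambda$ supplies the missing factor $\lambda^{p'-2}$ that closes the estimate. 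Applying this pointwise with $\lambda=h\leq 1$ and $t=\vert\nabla q\vert$, then integrating, yields $\rho_{(\phi_{\vert\bfD\bfv\vert})^*,\Omega}(h\nabla q)\leq c\,h^{p'}\rho_{(\phi_{\vert\bfD\bfv\vert})^*,\Omega}(\nabla q)$, which combined with Theorem \ref{thm:error_FE} gives the first inequality.

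For the \emph{second bound}, under the stronger assumption $\bff\in(L^2(\Omega))^d$, Lemma \ref{lem:pres}(ii) furnishes $\mu_{\bfD\bfv}^{-1}\vert\nabla q\vert^2\in L^1(\Omega)$. I would prove the sharper pointwise estimate
\begin{align*}
(\phi_{\vert\bfD\bfv\vert})^*(h\vert\nabla q\vert)\leq c\,h^2\,\mu_{\bfD\bfv}^{-1}\vert\nabla q\vert^2\qquad\text{a.e.\ in }\Omega
\end{align*}
by again splitting into the two regimes. In the case $h\vert\nabla q\vert\leq(\delta+\vert\bfD\bfv\vert)^{p-1}$, the left-hand side is $\sim(\delta+\vert\bfD\bfv\vert)^{(p-1)(p'-2)}h^2\vert\nabla q\vert^2=\mu_{\bfD\bfv}^{-1}h^2\vert\nabla q\vert^2$, using the identity $(p-1)(p'-2)=2-p$. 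In the complementary case, raising $h\vert\nabla q\vert>(\delta+\vert\bfD\bfv\vert)^{p-1}$ to the power $(p-2)/(p-1)=2-p'$ yields $(\delta+\vert\bfD\bfv\vert)^{p-2}\leq(h\vert\nabla q\vert)^{2-p'}$, equivalently $(h\vert\nabla q\vert)^{p'-2}\leq\mu_{\bfD\bfv}^{-1}$, and hence
\begin{align*}
(\phi_{\vert\bfD\bfv\vert})^*(h\vert\nabla q\vert)\sim h^{p'}\vert\nabla q\vert^{p'}=h^2\vert\nabla q\vert^2\,(h\vert\nabla q\vert)^{p'-2}\leq h^2\vert\nabla q\vert^2\mu_{\bfD\bfv}^{-1}\,.
\end{align*}
Integrating over $\Omega$ and inserting into Theorem \ref{thm:error_FE} delivers the second inequality.

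The main obstacle is nothing conceptually deep but rather the careful bookkeeping of the exponents $p$, $p'$, $p-2$, $2-p'$, $(p-1)(p'-2)=2-p$, combined with a clean case split across the two growth regimes of the shifted N-function $(\phi_{\vert\bfD\bfv\vert})^*$; the weighted $L^2$-quantity $\|\nabla q\|_{2,\mu_{\bfD\bfv}^{-1},\Omega}^2$ is precisely the pointwise majorant of $h^{-2}(\phi_{\vert\bfD\bfv\vert})^*(h\nabla q)$ that survives both regimes and so produces the improved $h^2$-scaling under the $L^2$-data assumption.
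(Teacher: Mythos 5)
Your proposal is correct, and the estimates you prove are exactly the right ones: $(\phi_a)^*(\lambda t)\leq c\,\lambda^{p'}(\phi_a)^*(t)$ for $\lambda\in[0,1]$ and $(\phi_a)^*(\lambda t)\leq c\,\lambda^2(\delta+a)^{2-p}t^2$, both verified by the case split across the two growth regimes of $(\phi_a)^*$. The identity $(p-1)(2-p')=p-2$ closes the crossover case exactly as you say, and the weighted $L^2$-quantity is the natural majorant for the second estimate. Integrating and inserting into Theorem \ref{thm:error_FE} gives both inequalities of the corollary.

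Your route differs from the paper's in presentation, not substance: the paper disposes of Corollary \ref{cor:error_FE} by citing \cite{JK23_inhom}, but the analogous reasoning it spells out for the sister Corollary \ref{cor:error_pressure_optimal} uses precisely the same two pointwise facts about $(\varphi_{\vert\bfD\bfv\vert})^*$. There the paper proves the first bound via the chain $(\varphi_a)^*(ht)\le\varphi^*(ht)\le c\,h^{p'}\varphi^*(t)$, i.e.\ it first drops the shift $a$ (which for $p>2$ only enlarges the conjugate) and then scales the unshifted $\varphi^*$. Your direct estimate $(\varphi_a)^*(ht)\le c\,h^{p'}(\varphi_a)^*(t)$, uniform in $a$, is mildly sharper: it lands exactly on the shifted modular $\rho_{(\phi_{\vert\bfD\bfv\vert})^*,\Omega}(\nabla q)$ appearing in the stated right-hand side, whereas the paper's chain only gives the (larger) unshifted modular $\rho_{\phi^*,\Omega}(\nabla q)$ --- the quantity that in fact appears in the approximation result Lemma \ref{lem:shifted_Q_approx}. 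So your argument is not only correct but actually better aligned with the literal form of the corollary's conclusion, at the cost of one extra sub-case in the scaling lemma.
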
\vspace*{-5mm}
\begin{proof}
  The assertion follows from \cite[Corollary 4.3, Theorem
  4.4]{JK23_inhom} and $s(p)=p$ for $p>2$ in \cite{JK23_inhom}.
\end{proof}

%

\begin{remark}
	\begin{itemize}[{(ii)}]
		\item[(i)]   In \cite{JK23_inhom}, numerical experiments confirmed the quasi-optimality of the \textit{a priori} error estimates for the velocity vector field  in Corollary \ref{cor:error_FE}.
		
		\item[(ii)]   In \cite{JK23_inhom}, numerical experiments indicated the sub-optimality of the \textit{a priori} error estimates for the kinematic pressure in Corollary \ref{cor:error_FE} in two dimensions.
	\end{itemize} 
\end{remark}

Let us continue with the main result of this paper. It proves the
conjecture from \cite{kr-pnse-ldg-3} under a mild additional
assumption on the velocity vector field, \textit{i.e.}, the Muckenhoupt regularity condition \eqref{eq:reg-assumption}.

\begin{theorem}
	\label{thm:error_pressure_optimal}
	Let the assumptions of Theorem \ref{thm:error_FE} be satisfied and let $\mu_{\bfD\bfv} \coloneqq (\delta+\vert \overline{\bfD\bfv}\vert)^{p-2}\in A_2(\mathbb{R}^d)$.
	Then, it holds that
	\begin{align*}
		\rho_{(\varphi_{\smash{\vert\bfD\bfv\vert}})^*,\Omega}(q_h-q)\leq
          c\,h^2 \,\big (\|\nabla\bfF(\bfD\bfv)\|_{2,\Omega}^2
          +\|\nabla q\|^2_{p',\Omega}\big )
		+c\,\rho_{(\varphi_{\vert \bfD\bfv\vert})^*,\Omega}(h\,\nabla q)+c\,\inf_{z_h\in \Qo_h}{\rho_{(\varphi_{\smash{\vert
						\bfD\bfv\vert}})^*,\Omega}(q-z_h)}\,,
	\end{align*}
	where $c>0$ depends on the characteristics of
	$\SSS$, $\delta^{-1}$, $\kappa_0$,  $m$, $\ell$, $\Omega$, $\|\bfv
	\|_{\infty,\Omega}$,  $\|\bfD\bfv	\|_{p,\Omega}$,  and $c_0$.
\end{theorem}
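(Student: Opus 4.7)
The plan is to mirror, in the conforming FE setting, the weighted inf-sup strategy developed for the LDG discretization in \cite{KR24pressure}. I split
\begin{equation*}
q_h - q = (q_h - \Pi_h^Q q) + (\Pi_h^Q q - q),
\end{equation*}
noting that the second summand is absorbed, via Lemma \ref{lem:shift-change} (change-of-shift), into the infimum term on the right-hand side; only the first, purely discrete summand requires nontrivial work. The central object to establish first is a Muckenhoupt-weighted discrete inf-sup of the form
\begin{equation*}
c\,\|z_h\|_{2,\mu_{\bfD\bfv}^{-1},\Omega}\le \sup_{\bfz_h\in\Vo_h\setminus\{\bfzero\}}\frac{(z_h,\divo\bfz_h)_\Omega}{\|\bfD\bfz_h\|_{2,\mu_{\bfD\bfv},\Omega}} \qquad \text{for all } z_h\in \Qo_h,
\end{equation*}
valid whenever $\mu_{\bfD\bfv}\in A_2(\mathbb{R}^d)$. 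Its verification proceeds via the Fortin criterion: (i) the continuous weighted inf-sup follows from the $W^{1,2}(\Omega;\mu_{\bfD\bfv})$-boundedness of the Bogovskii operator in $A_2$-weighted spaces; (ii) the Fortin projector $\Pi_h^V$ is $W^{1,2}(\Omega;\mu_{\bfD\bfv})$-stable, which I would derive from its local $L^1$--$W^{1,1}$ stability \eqref{eq:Pidivcont} combined with the boundedness of the Hardy--Littlewood maximal function on $L^2(\mathbb{R}^d;\mu_{\bfD\bfv})$ (equivalent to $\mu_{\bfD\bfv}\in A_2$); (iii) the divergence preservation \eqref{eq:div_preserving}.

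\textbf{Putting the pieces together.} Applying this inf-sup to $z_h=q_h-\Pi_h^Q q$ and invoking Galerkin orthogonality of (\hyperlink{Qhfe}{Q$_h$}) against a conforming test $\bfz_h\in\Vo_h$, I obtain
\begin{equation*}
(q_h-\Pi_h^Q q,\divo\bfz_h)_\Omega = (\bfS(\bfD\bfv_h)-\bfS(\bfD\bfv),\bfD\bfz_h)_\Omega + b(\bfv_h,\bfv_h,\bfz_h)-b(\bfv,\bfv,\bfz_h) + (q-\Pi_h^Q q,\divo\bfz_h)_\Omega.
\end{equation*}
Weighted Cauchy--Schwarz together with Proposition \ref{lem:hammer} (and a change-of-shift in the weight) yields $\|\bfS(\bfD\bfv_h)-\bfS(\bfD\bfv)\|_{2,\mu_{\bfD\bfv}^{-1},\Omega}\lesssim \|\bfF(\bfD\bfv_h)-\bfF(\bfD\bfv)\|_{2,\Omega}$ for the stress contribution. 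The convective contribution is handled using Temam's antisymmetry to expose $\bfv-\bfv_h$, combined with the $\|\bfv\|_{\infty,\Omega}$-bound, a discrete Sobolev embedding, and an $\epsilon$-Young inequality, producing a term again controlled by the $\bfF$-error. The last term is absorbed into the infimum after bounding $\|q-\Pi_h^Q q\|_{2,\mu_{\bfD\bfv}^{-1},\Omega}$ via the stability of $\Pi_h^Q$ (Assumption \ref{ass:PiY}) and a change-of-shift.

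\textbf{From weighted $L^2$ to the modular bound.} Since $(\varphi_a)^*(t)\sim (\delta+a)^{2-p}t^2$ in the regime $t\lesssim \varphi'_a(a)$ and $(\varphi_a)^*(t)\sim t^{p'}$ otherwise, a pointwise split allows me to convert the weighted-$L^2$ estimate for $q_h-\Pi_h^Q q$ into a modular estimate for $\rho_{(\varphi_{|\bfD\bfv|})^*,\Omega}(q_h-q)$; the excess from the $L^{p'}$-regime is controlled by the already available bound $\|q_h-q\|_{p',\Omega}^2$ from Theorem \ref{thm:error_FE}. Substituting that theorem's estimate for $\|\bfF(\bfD\bfv_h)-\bfF(\bfD\bfv)\|_{2,\Omega}^2$ then produces the three terms displayed in the statement.

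\textbf{Main obstacle.} The hardest step is the $W^{1,2}(\Omega;\mu_{\bfD\bfv})$-boundedness of the Fortin projector $\Pi_h^V$ for $\mu_{\bfD\bfv}\in A_2(\mathbb{R}^d)$. In the LDG setting of \cite{KR24pressure} one essentially works with local $L^2$-projections, for which weighted stability is almost automatic, whereas here one must lift the local $L^1$--$W^{1,1}$ stability \eqref{eq:Pidivcont} to a globally $A_2$-weighted $W^{1,2}$ bound through an element-patch maximal function argument, all while tracking the weight through degrees of freedom shared across elements. Propagating the weighted estimates cleanly through the nonlinear convective term, without generating unwanted $\bfv_h$-dependent constants, is the secondary delicate point.
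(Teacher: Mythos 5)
Your overall architecture is very close to the paper's: split off a discrete approximant of $q$ in $\Qo_h$, use a discrete inf\mbox{-}sup for the purely discrete remainder, substitute the discrete error identity for $(q_h-q,\divo\bfz_h)_\Omega$, and estimate the stress and convective contributions term by term. The paper also relies, as you do, on the $A_2$-weighted stability of $\Pi_h^V$ lifted from its local $L^1$--$W^{1,1}$ stability (this is exactly Lemma \ref{lem:shifted_modular_Pi_div}), and on a continuous inf\mbox{-}sup obtained from the Bogovskii operator (this is Lemma \ref{lem:convex_conjugation_ineq}, taken from \cite{KR24pressure}). Where you diverge is the \emph{form} of the inf\mbox{-}sup, and that is precisely where a genuine gap appears.

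You propose the weighted-$L^2$ discrete inf\mbox{-}sup $c\,\|z_h\|_{2,\mu_{\bfD\bfv}^{-1},\Omega}\le \sup_{\bfz_h}(z_h,\divo\bfz_h)_\Omega/\|\bfD\bfz_h\|_{2,\mu_{\bfD\bfv},\Omega}$, and then bound the stress contribution by weighted Cauchy--Schwarz plus the claim $\|\bfS(\bfD\bfv_h)-\bfS(\bfD\bfv)\|_{2,\mu_{\bfD\bfv}^{-1},\Omega}\lesssim \|\bfF(\bfD\bfv_h)-\bfF(\bfD\bfv)\|_{2,\Omega}$. That claim is false for $p>2$. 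Pointwise, by Proposition \ref{lem:hammer},
\begin{align*}
\frac{\mu_{\vert\bfA\vert}^{-1}\,\vert \bfS(\bfA)-\bfS(\bfB)\vert^2}{\vert\bfF(\bfA)-\bfF(\bfB)\vert^2}\sim \Big(\frac{\delta+\vert\bfA\vert+\vert\bfA-\bfB\vert}{\delta+\vert\bfA\vert}\Big)^{p-2}\,,
\end{align*}
which is $\ge 1$ but unbounded as soon as $\vert\bfA-\bfB\vert\gg\delta+\vert\bfA\vert$, and neither $\bfF(\bfD\bfv)\in (W^{1,2}(\Omega))^{d\times d}$ nor $\mu_{\bfD\bfv}\in A_2(\mathbb{R}^d)$ rules this out. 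A ``change of shift in the weight'' cannot repair this either: Lemma \ref{lem:shift-change} compares shifted $N$-functions $\varphi_a,(\varphi_a)^*$, not the raw power weight $\mu_a=(\delta+a)^{p-2}$, and there is no analogue that turns $\mu_{\vert\bfA\vert}^{-1}t^2$ into $(\varphi_{\vert\bfA\vert})^*(t)$ with a controllable remainder, precisely because $(\varphi_a)^*(t)\lesssim\mu_a^{-1}t^2$ (the inequality you would need goes the other way). The paper sidesteps this entirely by proving a \emph{modular} discrete convex-conjugation inequality (Lemma \ref{lem:discrete_convex_conjugation_ineq_FE}): $\rho_{(\varphi_{\vert\bfD\bfv\vert})^*,\Omega}(z_h)\le\sup_{\bfz_h}[(z_h,\divo\bfz_h)_\Omega-\tfrac{1}{c}\rho_{\varphi_{\vert\bfD\bfv\vert},\Omega}(\nabla\bfz_h)]$. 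Because the penalty there is the full shifted modular $\rho_{\varphi_{\vert\bfD\bfv\vert},\Omega}(\nabla\bfz_h)\sim\|\nabla\bfz_h\|_{2,\mu_{\bfD\bfv},\Omega}^2+\|\nabla\bfz_h\|_{p,\Omega}^p$, one can apply the Orlicz $\varepsilon$-Young inequality \eqref{ineq:young} with $\psi=\varphi_{\vert\bfD\bfv\vert}$ to $(\bfS(\bfD\bfv_h)-\bfS(\bfD\bfv)):\bfD\bfz_h$ and then invoke the sharp equivalence $(\varphi_{\vert\bfD\bfv\vert})^*(\vert\bfS(\bfD\bfv_h)-\bfS(\bfD\bfv)\vert)\sim\vert\bfF(\bfD\bfv_h)-\bfF(\bfD\bfv)\vert^2$, which is exactly what Cauchy--Schwarz in weighted $L^2$ cannot deliver.

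Two minor remarks. The conversion back to the modular is actually trivial and needs no pointwise regime-splitting: for $p>2$ one has $(\varphi_a)^*(t)\lesssim\mu_a^{-1}t^2$ unconditionally, so $\rho_{(\varphi_{\vert\bfD\bfv\vert})^*,\Omega}(z)\lesssim\|z\|_{2,\mu_{\bfD\bfv}^{-1},\Omega}^2$. And rather than fixing $z_h=\Pi_h^Q q$, the paper keeps $z_h\in\Qo_h$ general and passes to the infimum at the very end, which is slightly cleaner. But the substantive issue remains: without the modular form of the discrete inf\mbox{-}sup, the stress estimate does not close.
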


\begin{corollary}\label{cor:error_pressure_optimal}\allowdisplaybreaks
	Let the assumptions of Theorem \ref{thm:error_pressure_optimal} be satisfied. Then,~it~holds~that
	\begin{align*}
		\rho_{(\varphi_{\smash{\vert\bfD\bfv\vert}})^*,\Omega}(q_h-q)\leq
          c\, h^2\, \big (\|\nabla\bfF(\bfD \bfv)\|_{2,\Omega}^2 +\|\nabla q\|^2_{p',\Omega}\big )+c\,h^{p'}\rho_{(\phi_{\smash{\vert \bfD\bfv\vert}})^*,\Omega}(\nabla q)\,,
	\end{align*}
	where $c>0$ depends on the characteristics of
	$\SSS$, $\delta^{-1}$, $\kappa_0$,  $m$, $\ell$, $\Omega$, $\|\bfv
	\|_{\infty,\Omega}$,  $\|\bfD\bfv	\|_{p,\Omega}$,  and $c_0$.\newpage If $\bff \in (L^{2}(\Omega))^d$,~then
	\begin{align*}
		\rho_{(\varphi_{\smash{\vert\bfD\bfv\vert}})^*,\Omega}(q_h-q)
		\leq  c\, h^2\, \big( \|\nabla\bfF(\bfD \bfv)
		\|_{2,\Omega}^2 +\|\nabla q\|^2_{p',\Omega}+\|\nabla q\|_{2,\mu_{\bfD\bfv}^{-1},\Omega}^2 \big)\,,
	\end{align*}
	where $c>0$ depends on the characteristics of
	$\SSS$, $\delta^{-1}$, $\kappa_0$,  $m$, $\ell$, $\Omega$, $\|\bfv
	\|_{\infty,\Omega}$,  $\|\bfD\bfv	\|_{p,\Omega}$,  and $c_0$.
\end{corollary}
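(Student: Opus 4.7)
The plan is to start from Theorem \ref{thm:error_pressure_optimal} and reduce the two remaining terms—the scaled modular $\rho_{(\varphi_{|\bfD\bfv|})^*,\Omega}(h\nabla q)$ and the best-approximation infimum $\inf_{z_h\in\Qo_h}\rho_{(\varphi_{|\bfD\bfv|})^*,\Omega}(q-z_h)$—to quantities appearing on the right-hand side of the corollary. The reductions split by regularity: for $\bff\in(L^{p'}(\Omega))^d$ we obtain bounds of order $h^{p'}$, while under $\bff\in(L^{2}(\Omega))^d$, Lemma \ref{lem:pres}(ii) provides the weighted $L^2$-control on $\nabla q$ that upgrades the bounds to order $h^2$.

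For the first estimate, since $\varphi$ has $(p,\delta)$-structure with $p>2$, each shifted N-function satisfies $\varphi_a(\lambda t)\leq c\lambda^p\varphi_a(t)$ for every $\lambda\geq 1$; dualising yields $(\varphi_a)^*(\mu t)\leq c\mu^{p'}(\varphi_a)^*(t)$ for every $\mu\in(0,1]$. Applied pointwise with $a=|\bfD\bfv(x)|$, $t=|\nabla q(x)|$, $\mu=h$, this produces $\rho_{(\varphi_{|\bfD\bfv|})^*,\Omega}(h\nabla q)\leq c\,h^{p'}\rho_{(\phi_{|\bfD\bfv|})^*,\Omega}(\nabla q)$. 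For the infimum I would take $z_h=\Pi_h^Q q$ from Assumption \ref{ass:PiY} and combine the local $L^1$-stability \eqref{eq:PiYstab} with a shifted Orlicz–Poincar\'e inequality on each patch $\omega_K$, invoking Lemma \ref{lem:shift-change} to transfer pointwise shifts from patch-wise averages of $|\bfD\bfv|$ back to $|\bfD\bfv|$; the arising shift-change remainders are of order $h^2\|\nabla\bfF(\bfD\bfv)\|_{2,\Omega}^2$. Together these deliver the first estimate.

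For the second estimate, the core ingredient is the pointwise inequality
\[(\varphi_a)^*(h\,t)\leq c\,h^2\,t^2\,(\delta+a)^{2-p}\quad\text{for all }t\geq 0,\ a\geq 0,\ h\in(0,1]\,,\]
which I would prove by splitting on the crossover $ht=(\delta+a)^{p-1}$ that separates the quadratic and $p'$-homogeneous asymptotics of $(\varphi_a)^*$. For $ht\leq(\delta+a)^{p-1}$ the quadratic behaviour $(\varphi_a)^*(s)\sim s^2(\delta+a)^{2-p}$ delivers the bound directly. For $ht>(\delta+a)^{p-1}$ one has $(\varphi_a)^*(ht)\sim(ht)^{p'}$, and the algebraic identity $(p-1)(p'-2)=2-p$ combined with the defining inequality of this regime yields $(ht)^{p'-2}\leq(\delta+a)^{2-p}$, equivalently $(ht)^{p'}\leq(ht)^2(\delta+a)^{2-p}$. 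Integrating with $t=|\nabla q|$, $a=|\bfD\bfv|$ gives $\rho_{(\varphi_{|\bfD\bfv|})^*,\Omega}(h\nabla q)\leq c\,h^2\|\nabla q\|_{2,\mu_{\bfD\bfv}^{-1},\Omega}^2$, and an analogous patch-wise argument converts the approximation infimum into terms of the same form, while Lemma \ref{lem:pres}(ii) certifies the finiteness of the resulting weighted $L^2$-norm.

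The main obstacle is establishing the pointwise inequality above uniformly in $a$ and $t$: although the underlying calculation is short, it is precisely the matching of exponents at the crossover, via $(p-1)(p'-2)=2-p$, that makes the shear-thickening regime $p>2$ admit an $h^2$-bound rather than the generic $h^{p'}$ bound. A secondary concern is to verify that the shifted Orlicz–Poincar\'e estimate controlling the approximation infimum introduces only shift-change remainders already present on the right-hand side of the corollary; this requires careful bookkeeping via Lemma \ref{lem:shift-change} when swapping the shift $\langle|\bfD\bfv|\rangle_{\omega_K}$ for $|\bfD\bfv|$.
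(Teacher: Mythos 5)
Your proposal is correct and follows essentially the same route as the paper: start from Theorem~\ref{thm:error_pressure_optimal}, bound $\rho_{(\varphi_{\smash{|\bfD\bfv|}})^*,\Omega}(h\nabla q)$ by the two pointwise inequalities for the shifted conjugate, and absorb the approximation infimum into terms of the stated form. Your key pointwise inequality $(\varphi_a)^*(h\,t)\le c\,(\delta+a)^{2-p}h^2t^2$, established via the identity $(p-1)(p'-2)=2-p$, is exactly the one the paper uses (the paper writes it as $(\varphi_a)^*(h\,t)\sim\big((\delta+a)^{p-1}+h\,t\big)^{p'-2}h^2t^2\le(\delta+a)^{2-p}h^2t^2$; you obtain the same bound via a crossover split). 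For the $h^{p'}$ bound you work directly with the shifted conjugate, showing $(\varphi_a)^*(\mu t)\le c\,\mu^{p'}(\varphi_a)^*(t)$ uniformly in $a$; the paper instead passes through the unshifted $\varphi^*$, using $(\varphi_a)^*\le\varphi^*$ (valid for $p>2$) and then $\varphi^*(h\,t)\le c\,h^{p'}\varphi^*(t)$. Both variants are fine.

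The one place you deviate from the paper is the treatment of the best-approximation term $\inf_{z_h\in\Qo_h}\rho_{(\varphi_{\smash{|\bfD\bfv|}})^*,\Omega}(q-z_h)$. The paper simply invokes Lemma~\ref{lem:shifted_Q_approx}, which is imported verbatim from \cite{KR24pressure}, whereas you sketch a re-derivation using $\Pi_h^Q q$, local $L^1$-stability, a patchwise shifted Orlicz--Poincar\'e inequality, and Lemma~\ref{lem:shift-change}. That outline is morally right, but two points deserve care. First, the Muckenhoupt hypothesis $\mu_{\bfD\bfv}\in A_2(\mathbb{R}^d)$ is a substantive ingredient in that approximation lemma (it controls the constant in the weighted patchwise estimate and makes the local averages of $\mu_{\bfD\bfv}^{\pm1}$ comparable to $\mu_{\bfD\bfv}^{\pm1}$ itself), and your sketch does not make clear where it enters. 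Second, the claim that the shift-change remainders contribute only $h^2\|\nabla\bfF(\bfD\bfv)\|_{2,\Omega}^2$ is plausible but is precisely the bookkeeping that the cited lemma encapsulates; you would need to track that the $\varepsilon$-terms from Lemma~\ref{lem:shift-change} can indeed be absorbed. Since the paper does not carry out that verification either, your approach is acceptable in spirit, but if you intend to make it rigorous you should be explicit about where the $A_2$ property is used and how the shift-change error terms close.
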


The key ingredient in the proof of Theorem \ref{thm:error_FE} is the following discrete convex~conjugation~inequality.\enlargethispage{10mm}

\begin{lemma}[Discrete convex conjugation inequality]\label{lem:discrete_convex_conjugation_ineq_FE}
	Let  $p\in [2,\infty)$ and $\delta\ge  0$. Moreover, 
	let $\bfA\in (L^p(\Omega))^{d\times d}$  with  $\mu_{\bfA} \coloneqq (\delta+\vert\overline{\bfA}\vert)^{p-2}\in A_2(\mathbb{R}^d)$. Then, there exists a constant $c>0$, depending on $p$,  $\kappa_0$, $m$, $\ell$, $\Omega$, and $[\mu_{\bfA} ]_{A_2(\mathbb{R}^d)}$, such that for every $z_h \in \Qo_h$, it holds that
	\begin{align*}
		\rho_{(\varphi_{\smash{\vert\bfA\vert}})^*,\Omega}(z_h )\leq \sup_{\bfz_h\in \Vo_h}{\big[(z_h ,\divo\bfz_h)_\Omega-\smash{\tfrac{1}{c}}\,\rho_{\varphi_{\smash{\vert\bfA\vert}},\Omega}(\nabla\bfz_h)\big]}\,.\\[-8mm]
	\end{align*}
\end{lemma}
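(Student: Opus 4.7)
The plan is to exhibit, for each $z_h \in \Qo_h$, a concrete test function $\bfz_h \in \Vo_h$ that makes the bracket inside the supremum at least a constant multiple of $\rho_{(\varphi_{\vert\bfA\vert})^*,\Omega}(z_h)$. This $\bfz_h$ will be built in two stages: first a continuous corrector $\bfz \in \Vo$ obtained via a weighted Bogovskii construction, and then its projection $\bfz_h \coloneqq \Pi_h^{\smash{V}}\bfz$ into the discrete space.

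For the continuous stage, we exploit the Fenchel--Young equality. Since $(\varphi_{\vert\bfA\vert})^* \in \Delta_2$, the pointwise identities $s \cdot ((\varphi_{\vert\bfA\vert})^*)'(s) \sim (\varphi_{\vert\bfA\vert})^*(s)$ and $\varphi_{\vert\bfA\vert}(((\varphi_{\vert\bfA\vert})^*)'(s)) \sim (\varphi_{\vert\bfA\vert})^*(s)$ hold. We set $g \coloneqq ((\varphi_{\vert\bfA\vert})^*)'(\vert z_h\vert)\operatorname{sgn}(z_h)$ and $\tilde g \coloneqq g - \mean{g}_\Omega$, and solve $\divo \bfz = \tilde g$ in $\Omega$ with homogeneous Dirichlet data via the Bogovskii operator. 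The key input is the weighted Bogovskii bound: because $\mu_{\bfA} \in A_2(\mathbb{R}^d)$, the standard Calder\'on--Zygmund theory for $A_2$-weights yields $\|\nabla\bfz\|_{L^2(\Omega;\mu_{\bfA})} \leq c\,[\mu_{\bfA}]_{A_2(\mathbb{R}^d)} \|\tilde g\|_{L^2(\Omega;\mu_{\bfA})}$. Promoted to a modular statement for the generalized N-function $\varphi_{\vert\bfA\vert}$, and combined with the pointwise Young identities above, this gives $\rho_{\varphi_{\vert\bfA\vert},\Omega}(\nabla\bfz) \leq c\,\rho_{(\varphi_{\vert\bfA\vert})^*,\Omega}(z_h)$.

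For the discrete stage we take $\bfz_h \coloneqq \Pi_h^{\smash{V}}\bfz \in \Vo_h$. Divergence preservation in the $Q_h^*$-sense (Assumption~\ref{ass:proj-div}(i)), together with the zero-mean property of $z_h$, yields $(z_h,\divo\bfz_h)_\Omega = (z_h,\divo\bfz)_\Omega = (z_h,g)_\Omega \sim \rho_{(\varphi_{\vert\bfA\vert})^*,\Omega}(z_h)$. For the gradient term, the local $L^1$-$W^{1,1}$-stability of Assumption~\ref{ass:proj-div}(iii) combined with an inverse estimate on each simplex provides $\langle\vert\nabla\bfz_h\vert\rangle_K \leq c\,\langle\vert\nabla\bfz\vert\rangle_{\omega_K}$ for every $K \in \pazocal{T}_h$. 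Upgrading this $L^1$-bound to a modular bound for the shifted N-function $\varphi_{\vert\bfA\vert}$ and chaining it with the weighted Bogovskii estimate, we obtain $(z_h,\divo\bfz_h)_\Omega - \tfrac{1}{c}\rho_{\varphi_{\vert\bfA\vert},\Omega}(\nabla\bfz_h) \geq c'\,\rho_{(\varphi_{\vert\bfA\vert})^*,\Omega}(z_h)$, which gives the desired inequality after a rescaling of $\bfz_h$ to absorb constants.

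The principal obstacle is the modular stability $\rho_{\varphi_{\vert\bfA\vert},\Omega}(\nabla\Pi_h^{\smash{V}}\bfz) \leq c\,\rho_{\varphi_{\vert\bfA\vert},\Omega}(\nabla\bfz)$ for a generalized N-function whose shift $\vert\bfA\vert$ varies in space. On each patch $\omega_K$ we will first replace $\varphi_{\vert\bfA\vert}$ by the single shifted N-function with shift $\langle\vert\bfA\vert\rangle_{\omega_K}$ via the shift-change Lemma~\ref{lem:shift-change}, then apply Jensen's inequality to the local $L^1$-bound, and finally transfer the averaged estimate back to the global modular on $\Omega$. The last transfer requires a reverse H\"older inequality on balls, which is precisely what the $A_2$-condition on $\mu_{\bfA}$ supplies. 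The same $A_2$-property underlies the weighted Bogovskii estimate of the first stage, so the Muckenhoupt regularity condition enters at both critical steps. This parallels the mechanism used in the LDG analysis of~\cite{KR24pressure}, adapted to the conforming FE setting provided by Assumption~\ref{ass:proj-div}.
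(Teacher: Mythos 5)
Your proof follows the same skeleton as the paper's: reduce the discrete inequality to its continuous counterpart by testing with the divergence-preserving projection $\Pi_h^{\smash{V}}$ and invoking a stability estimate for $\Pi_h^{\smash{V}}$ in the shifted modular $\rho_{\varphi_{\smash{\vert\bfA\vert}},\Omega}$. The paper, however, treats both ingredients as black boxes: it cites the continuous convex conjugation inequality (Lemma~\ref{lem:convex_conjugation_ineq}, from \cite{KR24pressure}) and the modular stability estimate (Lemma~\ref{lem:shifted_modular_Pi_div}), so the proof reduces to a four-line chain of inequalities in which $\divo\bfz$ is replaced by $\divo\Pi_h^{\smash{V}}\bfz$ via Assumption~\ref{ass:proj-div}(i), $\rho_{\varphi_{\smash{\vert\bfA\vert}},\Omega}(\nabla\bfz)$ is replaced by $\rho_{\varphi_{\smash{\vert\bfA\vert}},\Omega}(\nabla\Pi_h^{\smash{V}}\bfz)$ via Lemma~\ref{lem:shifted_modular_Pi_div}, and the supremum is then enlarged to all of $\Vo_h$. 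You instead unpack Lemma~\ref{lem:convex_conjugation_ineq} by re-running the Bogovskii construction with the Fenchel--Young extremizer $g \coloneqq ((\varphi_{\smash{\vert\bfA\vert}})^*)'(\vert z_h\vert)\operatorname{sgn}(z_h)$ and a weighted Calder\'on--Zygmund bound; this is legitimate but duplicates work the paper outsources, and the step ``promoted to a modular statement'' quietly requires the pointwise two-term equivalence $\varphi_{\smash{\vert\bfA(x)\vert}}(t)\sim \mu_{\bfA}(x)\,t^2 + t^p$ (valid only because $p\ge 2$), which you do not state.

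The substantive gap is in your sketch of the modular stability of $\Pi_h^{\smash{V}}$, which you identify as the principal obstacle. You propose to freeze the shift on each patch $\omega_K$ to the constant $\langle\vert\bfA\vert\rangle_{\omega_K}$ via the shift-change Lemma~\ref{lem:shift-change}, apply Jensen to the local $L^1$-stability, and undo the freeze by a reverse H\"older inequality. The shift-change error, however, is of the form $\varepsilon\,\vert\bfF(\bfA)-\bfF(\langle\bfA\rangle_{\omega_K})\vert^2$ (or $\varepsilon\,\varphi_{\langle\vert\bfA\vert\rangle_{\omega_K}}(\vert\vert\bfA\vert-\langle\vert\bfA\vert\rangle_{\omega_K}\vert)$), i.e.\ a \emph{fixed} local $\bfF$-oscillation of $\bfA$ that neither vanishes nor is dominated by the target $\rho_{\varphi_{\smash{\vert\bfA\vert}},\omega_K}(\nabla\bfz)$: under the stated hypotheses ($\bfA\in (L^p(\Omega))^{d\times d}$, $\mu_{\bfA}\in A_2(\mathbb{R}^d)$, no differentiability of $\bfA$) it cannot be absorbed with a constant depending only on $[\mu_{\bfA}]_{A_2(\mathbb{R}^d)}$, so the argument does not close. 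The paper's Lemma~\ref{lem:shifted_modular_Pi_div} avoids the shift change altogether: it uses the pointwise equivalence $\varphi_{\smash{\vert\bfA\vert}}(t)\sim\mu_{\bfA}\,t^2+t^p$ to split the modular into a weighted $L^2(\mu_{\bfA})$-part (handled via an $L^\infty$--$L^1$ inverse estimate, the local $L^1$-$W^{1,1}$-stability of $\Pi_h^{\smash{V}}$, and the $A_2$-condition on $B_K\supseteq\omega_K$) and a plain $L^p$-part. You should replace your shift-change sketch with that decomposition, or simply cite Lemma~\ref{lem:shifted_modular_Pi_div}.
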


The proof of Lemma \ref{lem:discrete_convex_conjugation_ineq_FE} is based on two key ingredients.
The first key ingredient is the following  continuous counterpart.

\begin{lemma}[Convex conjugation inequality]\label{lem:convex_conjugation_ineq}
	Let   $p\in [2,\infty)$ and $\delta\ge 0$. Moreover, 
	let $\bfA\in (L^p(\Omega))^{d\times d}$ with  $\mu_{\bfA} \coloneqq (\delta+\vert\overline{\bfA}\vert)^{p-2}\in A_2(\mathbb{R}^d)$. Then, there exists a constant $c>0$, depending on $p$, $\Omega$, and the $[\mu_{\bfA} ]_{A_2(\mathbb{R}^d)}$, such that for every $z\in \Qo$, it holds that
	\begin{align*}
		\rho_{(\varphi_{\smash{\vert\bfA\vert}})^*,\Omega}(z)\leq \sup_{\bfz\in \Vo}{\big[(z,\textup{div}\,\bfz)_\Omega-\smash{\tfrac{1}{c}}\,\rho_{\varphi_{\smash{\vert\bfA\vert}},\Omega}(\nabla \bfz)\big]}\,.\\[-8mm]
	\end{align*}
\end{lemma}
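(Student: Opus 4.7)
The approach is to realize the supremum on the right-hand side by constructing an explicit competitor $\bfz\in\Vo$ as the image, under a Bogovskii-type right inverse of the divergence, of the pointwise Legendre dual of $z$ with respect to the generalized N-function $\varphi_{\vert\bfA\vert}$. For $z\in\Qo$, I would define the measurable function $g\colon\Omega\to\mathbb{R}$ by
\[g(x)\coloneqq(\varphi_{\vert\bfA(x)\vert})^{*\prime}(\vert z(x)\vert)\,\textup{sgn}(z(x))\quad\text{for a.e. }x\in\Omega\,.\]
The Legendre--Fenchel identity applied pointwise gives
\[z(x)\,g(x)=(\varphi_{\vert\bfA(x)\vert})^*(\vert z(x)\vert)+\varphi_{\vert\bfA(x)\vert}(\vert g(x)\vert)\quad\text{for a.e. }x\in\Omega\,,\]
and integrating, together with the uniform $\Delta_2$-condition on $\varphi_{\vert\bfA\vert}$ and $(\varphi_{\vert\bfA\vert})^*$ (which holds since $p\ge 2$), yields
\[(z,g)_\Omega=\rho_{(\varphi_{\vert\bfA\vert})^*,\Omega}(z)+\rho_{\varphi_{\vert\bfA\vert},\Omega}(g)\,,\qquad\rho_{\varphi_{\vert\bfA\vert},\Omega}(g)\sim\rho_{(\varphi_{\vert\bfA\vert})^*,\Omega}(z)\,.\]
Moreover, since $\varphi_a(t)\ge t^p/p$ uniformly in $a\ge 0$ for $p\ge 2$, one has $g\in L^p(\Omega)$.

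Next, setting $\tilde g\coloneqq g-\mean{g}_\Omega\in L^p_0(\Omega)$, the zero-mean property of $z$ gives $(z,\tilde g)_\Omega=(z,g)_\Omega$. I would then invoke a Bogovskii-type right inverse $\mathcal{B}\colon L^p_0(\Omega)\to\Vo$ of the divergence and set $\bfz\coloneqq\mathcal{B}\tilde g$, so that $\divo\bfz=\tilde g$. The crucial analytic ingredient is the Orlicz-modular Bogovskii bound
\[\rho_{\varphi_{\vert\bfA\vert},\Omega}(\nabla\bfz)\le C\,\rho_{\varphi_{\vert\bfA\vert},\Omega}(\tilde g)\le C'\,\rho_{(\varphi_{\vert\bfA\vert})^*,\Omega}(z)\,,\]
which, thanks to the pointwise equivalence $\varphi_a(t)\sim(\delta+a)^{p-2}t^2+t^p$ for $p\ge 2$, reduces to the simultaneous boundedness of $\mathcal{B}$ from $L^2(\Omega,\mu_{\bfA})$ to $(W^{1,2}_0(\Omega,\mu_{\bfA}))^d$ (which uses $\mu_{\bfA}\in A_2(\mathbb{R}^d)$ via the Dur\'an--Muschietti theory of weighted singular integrals) and from $L^p(\Omega)$ to $(W^{1,p}_0(\Omega))^d$ (the classical Bogovskii estimate). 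Combining,
\[(z,\divo\bfz)_\Omega-\tfrac{1}{c}\rho_{\varphi_{\vert\bfA\vert},\Omega}(\nabla\bfz)\ge\rho_{(\varphi_{\vert\bfA\vert})^*,\Omega}(z)+\bigl(1-\tfrac{C'}{c}\bigr)\rho_{\varphi_{\vert\bfA\vert},\Omega}(g)\ge\rho_{(\varphi_{\vert\bfA\vert})^*,\Omega}(z)\]
for $c\ge C'$, and taking the supremum over $\bfz\in\Vo$ yields the claim, with constant $c$ depending on $p$, $\Omega$, and $[\mu_{\bfA}]_{A_2(\mathbb{R}^d)}$.

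The main obstacle is precisely the Orlicz-modular Bogovskii estimate for the shifted N-function $\varphi_{\vert\bfA\vert}$: one needs a \emph{single} universal right inverse of the divergence that simultaneously realizes a weighted $L^2$-bound with Muckenhoupt weight $\mu_{\bfA}$ and an unweighted $L^p$-bound, and these two endpoint bounds only assemble into the desired modular inequality because the decomposition $\varphi_{\vert\bfA\vert}(t)\sim\mu_{\bfA}t^2+t^p$ is linear in the two pieces. The $A_2(\mathbb{R}^d)$-hypothesis on $\mu_{\bfA}$ is exactly what supplies the weighted Calder\'on--Zygmund bounds on the singular-integral kernel underlying $\mathcal{B}$, and it is at this step that the Muckenhoupt regularity assumption \eqref{eq:reg-assumption} enters essentially; without it, the weighted bound, and hence the whole duality argument, would break down.
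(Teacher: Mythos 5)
The paper itself does not prove this lemma; it simply cites \cite[Lem.~18]{KR24pressure}, so a line-by-line comparison is not possible here. That said, your blind proof is essentially the standard route one would expect that reference to take, and I believe it is correct. Your decomposition rests on three correct pillars: (a) the pointwise Fenchel equality realized by $g\coloneqq(\varphi_{\vert\bfA\vert})^{*\prime}(\vert z\vert)\,\mathrm{sgn}(z)$, which gives both $(z,g)_\Omega=\rho_{(\varphi_{\vert\bfA\vert})^*,\Omega}(z)+\rho_{\varphi_{\vert\bfA\vert},\Omega}(g)$ and, via the uniform $\Delta_2$-bounds for $\varphi_a$ and $(\varphi_a)^*$ (which hold uniformly in $a$ and $\delta$ for fixed $p$), the modular equivalence $\rho_{\varphi_{\vert\bfA\vert},\Omega}(g)\sim\rho_{(\varphi_{\vert\bfA\vert})^*,\Omega}(z)$; (b) the pointwise equivalence $\varphi_a(t)\sim(\delta+a)^{p-2}t^2+t^p$ valid for $p\ge2$, which linearizes the modular so the two endpoint Bogovskii bounds can be added; and (c) the simultaneous weighted $L^2(\Omega;\mu_{\bfA})\to W^{1,2}_0(\Omega;\mu_{\bfA})$ and unweighted $L^p\to W^{1,p}_0$ bounds for a single Bogovskii right inverse, the former supplied precisely by $\mu_{\bfA}\in A_2(\mathbb{R}^d)$ through the Dur\'an--Muschietti weighted singular-integral theory. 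This is exactly where the Muckenhoupt hypothesis is doing the work, and you identify that correctly.

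One step you gloss over and should make explicit is the passage from $\rho_{\varphi_{\vert\bfA\vert},\Omega}(\tilde g)$ to $\rho_{(\varphi_{\vert\bfA\vert})^*,\Omega}(z)$ after the mean-value correction $\tilde g=g-\mean{g}_\Omega$. Because the shift $\vert\bfA(x)\vert$ varies with $x$, one cannot invoke a naive Jensen argument; rather one should again use the linearization $\varphi_a(t)\sim\mu_{\bfA}t^2+t^p$, treat the two pieces separately, and on the quadratic piece estimate $\mean{g}_\Omega^2\int_\Omega\mu_{\bfA}\,\mathrm{d}x$ by $\int_\Omega g^2\mu_{\bfA}\,\mathrm{d}x$ via Cauchy--Schwarz together with the $A_2$-condition on a ball containing $\Omega$; the $p$-piece follows from ordinary Jensen. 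This produces exactly the dependence on $p$, $\Omega$, and $[\mu_{\bfA}]_{A_2(\mathbb{R}^d)}$ claimed in the lemma. With that filled in, your argument is complete and gives a self-contained proof of the statement; the approach also dovetails with how Lemma~\ref{lem:shifted_modular_Pi_div} in the paper uses \eqref{lem:key.3}, which is further evidence it is the intended one.
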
\vspace*{-5mm}

\begin{proof}
	See \cite[Lem.\ 18]{KR24pressure}.
\end{proof}

The second key ingredient is the following stability result for the projection operators $\{\Pi_h^V\}_{h>0}$ in terms of the shifted modular $\rho_{\varphi_{\smash{\vert \bfD\bfv\vert}},\Omega}$.
	
	\begin{lemma}[Shifted modular estimate for $\Pi_h^{\smash{V}}$]\label{lem:shifted_modular_Pi_div}
		Let  $p\in [2,\infty)$ and $\delta\ge  0$. Moreover, 
		let ${\bfA\in (L^p(\Omega))^{d\times d}}$ with $\mu_{\bfA} \coloneqq (\delta+\vert\overline{\bfA}\vert)^{p-2}\in A_2(\mathbb{R}^d)$.  Then, there exists a constant $c>0$, depending on $k$, $\kappa_0$, $p$, $\Omega$, and $[\mu_{\bfA}]_{A_2(\mathbb{R}^d)}$, such that for every $\bfz\in \Vo$ and $K\in \mathcal{T}_h$, it holds that
		\begin{align*}
			\rho_{\varphi_{\smash{\vert\bfA\vert}},K}(\nabla \Pi_h^{\smash{V}} \bfz)\leq c\, \rho_{\varphi_{\smash{\vert\bfA\vert}},\omega_K}(\nabla \bfz)\,.\\[-8mm]
		\end{align*}
	\end{lemma}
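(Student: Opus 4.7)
The plan is to exploit the $(p,\delta)$-structure with $p\geq 2$: starting from the explicit formula \eqref{eq:phi_shifted}, a direct computation gives $\varphi_a(t)=\int_0^t(\delta+a+s)^{p-2}s\,\mathrm{d}s$, and splitting $(\delta+a+s)^{p-2}\sim (\delta+a)^{p-2}+s^{p-2}$ (valid for $p\geq 2$) yields the pointwise equivalence
\begin{align*}
	\varphi_{|\bfA(x)|}(t)\sim \mu_{\bfA}(x)\,t^2+t^p\qquad\text{for all }t\geq 0\text{ and a.e.\ }x\in\Omega\,.
\end{align*}
Consequently, the shifted modular decomposes, up to equivalence, into a weighted $L^2$-part plus an unweighted $L^p$-part, and the lemma reduces to the two separate stability estimates $\|\nabla\Pi_h^{\smash{V}}\bfz\|_{L^p(K)}^p\leq c\,\|\nabla\bfz\|_{L^p(\omega_K)}^p$ and $\|\nabla\Pi_h^{\smash{V}}\bfz\|_{L^2(K,\mu_{\bfA})}^2\leq c\,\|\nabla\bfz\|_{L^2(\omega_K,\mu_{\bfA})}^2$.

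Both estimates rest on the observation that, since $\mathbb{P}^1_c(\pazocal{T}_h)\subseteq V_h$ and $\Pi_h^{\smash{V}}$ is a projection, $\Pi_h^{\smash{V}}\bfc=\bfc$ for every constant $\bfc\in\mathbb{R}^d$, so that with $\bar\bfz\coloneqq\langle\bfz\rangle_{\omega_K}$ we have $\nabla\Pi_h^{\smash{V}}\bfz=\nabla\Pi_h^{\smash{V}}(\bfz-\bar\bfz)$. The local $L^1$-$W^{1,1}$-stability \eqref{eq:Pidivcont} together with the unweighted Poincar\'e inequality on $\omega_K$ yields $\|\Pi_h^{\smash{V}}(\bfz-\bar\bfz)\|_{L^1(K)}\leq c\,h_K\,\|\nabla\bfz\|_{L^1(\omega_K)}$. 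The $L^p$-estimate then follows by combining the scaling inverse estimate $\|\nabla\bfq\|_{L^p(K)}\leq c\,h_K^{-1-d/p'}\|\bfq\|_{L^1(K)}$ on the polynomial $\bfq\coloneqq \Pi_h^{\smash{V}}(\bfz-\bar\bfz)|_K$ with H\"older's inequality $\|\nabla\bfz\|_{L^1(\omega_K)}\leq c\,h_K^{d/p'}\|\nabla\bfz\|_{L^p(\omega_K)}$, in which the $h_K$-factors cancel.

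The weighted $L^2$-bound is the main obstacle and is where the $A_2$-assumption enters. Using the standard polynomial inverse estimate $\|\bfq\|_{L^\infty(K)}\leq c\,|K|^{-1}\|\bfq\|_{L^1(K)}$ directly inside the weighted norm, we obtain, for every $\bfq\in(\mathbb{P}^k(K))^d$, the ``polynomial-weighted'' bound
\begin{align*}
	\|\bfq\|_{L^2(K,\mu_{\bfA})}^2\leq \|\bfq\|_{L^\infty(K)}^2\,\mu_{\bfA}(K)\leq c\,\langle\mu_{\bfA}\rangle_K\,|K|^{-1}\,\|\bfq\|_{L^1(K)}^2\,,
\end{align*}
which does not use any property of $\mu_{\bfA}$. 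Applying this with $\bfq=\nabla\Pi_h^{\smash{V}}(\bfz-\bar\bfz)|_K$ together with the inverse inequality $\|\nabla\bfq\|_{L^1(K)}\leq c\,h_K^{-1}\|\bfq\|_{L^1(K)}$ and the $L^1$-stability estimate derived above yields $\|\nabla\Pi_h^{\smash{V}}\bfz\|_{L^2(K,\mu_{\bfA})}^2\leq c\,\langle\mu_{\bfA}\rangle_K\,|K|^{-1}\,\|\nabla\bfz\|_{L^1(\omega_K)}^2$. The Cauchy--Schwarz inequality then provides $\|\nabla\bfz\|_{L^1(\omega_K)}^2\leq \|\mu_{\bfA}^{-1}\|_{L^1(\omega_K)}\,\|\nabla\bfz\|_{L^2(\omega_K,\mu_{\bfA})}^2$, and shape regularity combined with the $A_2$-bound $\langle\mu_{\bfA}\rangle_B\,\langle\mu_{\bfA}^{-1}\rangle_B\leq [\mu_{\bfA}]_{A_2(\mathbb{R}^d)}$ applied on a ball $B\supseteq\omega_K$ of diameter~$\sim h_K$ controls the prefactor $\langle\mu_{\bfA}\rangle_K\,\langle\mu_{\bfA}^{-1}\rangle_{\omega_K}\leq c\,[\mu_{\bfA}]_{A_2(\mathbb{R}^d)}$, concluding the proof; the quantitative dependence of the final constant on $[\mu_{\bfA}]_{A_2(\mathbb{R}^d)}$ is introduced precisely in this last step.
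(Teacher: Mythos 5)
Your proof is correct and follows essentially the same route as the paper: both decompose the shifted modular into a weighted $L^2$ part plus an unweighted $L^p$ part (using $p\ge 2$), bound the weighted $L^2$ part by an $L^\infty$-$L^1$ inverse estimate on $K$, the local $L^1$-stability of $\nabla\Pi_h^{\smash{V}}$, Cauchy--Schwarz to reinsert the weight on $\omega_K$, and the $A_2$-condition on a ball comparable to $\omega_K$ to tame the prefactor $\langle\mu_{\bfA}\rangle_K\langle\mu_{\bfA}^{-1}\rangle_{\omega_K}$. The only real difference is that you re-derive the local $W^{1,1}$- and $W^{1,p}$-stability of $\Pi_h^{\smash{V}}$ from \eqref{eq:Pidivcont} by subtracting a constant and using Poincar\'e and scaling inverse estimates, whereas the paper cites these as ready-made consequences from \cite[Cor.~4.8]{dr-interpol}; your notation briefly conflates $\Pi_h^{\smash{V}}(\bfz-\bar\bfz)|_K$ and its gradient when invoking the $W^{1,1}$-$L^1$ inverse inequality, but the argument is otherwise sound.
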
\vspace*{-5mm}

\begin{proof}
	Using H\"older's inequality, a local  norm equivalence
        (\textit{cf}.\ \cite[Lem.\ 12.1]{EG21}), the consequence
        \cite[Corollary 4.8]{dr-interpol} of the local $L^1$-$W^{1,1}$-stability of $\Pi_h^{\smash{V}}$ (\textit{cf}.\ Assumption \ref{ass:proj-div}), $\mu_{\bfA}\hspace*{-0.05em}\in\hspace*{-0.05em}
	A_2(\mathbb{R}^d)$, and   $\vert
	B_K\vert\hspace*{-0.05em}\sim\hspace*{-0.05em} \vert K\vert\hspace*{-0.05em}\sim \hspace*{-0.05em}\vert \omega_K\vert$, where  $B_K\hspace*{-0.05em}\coloneqq\hspace*{-0.05em} B_{\textup{diam}(\omega_K)}^d(x_K)$ and  $x_K$ is the barycenter of $K$, we find that
	\begin{align}\label{lem:shifted_modular_Pi_div.1}
		\begin{aligned}
			\|\nabla \Pi_h^{\smash{V}} \bfz\|_{2,\mu_{\bfA},K}&\leq c\, \|\nabla \Pi_h^{\smash{V}}\bfz\|_{\infty,K}\|\mu_{\bfA}\|_{1,K}^{\smash{1/2}}\\&\leq 
			c\,\vert K\vert^{-1}\|\nabla \Pi_h^{\smash{V}} \bfz\|_{1,K} \|\mu_{\bfA}\|_{1,K}^{\smash{1/2}}
			\\&\leq c\,	\vert K\vert^{-1}\|\nabla \bfz\|_{1,\omega_K} \|\mu_{\bfA}\|_{1,K}^{\smash{1/2}}
			\\&\leq c\,	\vert K\vert^{-1}\|\nabla \bfz\mu_{\bfA}^{\smash{1/2}}\mu_{\bfA}^{\smash{-1/2}}\|_{1,\omega_K} \|\mu_{\bfA}\|_{1,K}^{\smash{1/2}}
			\\&\leq c\, \|\nabla \bfz\|_{2,\mu_{\bfA},\omega_K}\|\vert \omega_K\vert^{-1}\mu_{\bfA}^{-1}\|_{1,\omega_K}^{\smash{1/2}} \|\vert K\vert^{-1}\mu_{\bfA}\|_{1,K}^{\smash{1/2}}
			\\&\leq c\,	\|\nabla \bfz\|_{2,\mu_{\bfA},\omega_K}\smash{\|\vert B_K\vert^{-1}\mu_{\bfA}^{-1}\|_{1,B_K}^{\smash{1/2}} \|\vert B_K\vert^{-1}\mu_{\bfA}\|_{1,B_K}^{\smash{1/2}}}
			\\&\leq c\, \|\nabla \bfz\|_{2,\mu_{\bfA},\omega_K}\,.
		\end{aligned}
	\end{align}
	On the other hand, appealing to \cite[Cor.\ 4.8]{dr-interpol}, for every $K\in \mathcal{T}_h$, we have that
	\begin{align}\label{lem:shifted_modular_Pi_div.2}
		\|\nabla \Pi_h^{\smash{V}} \bfz\|_{p,K}\leq c\, \|\nabla\bfz\|_{p,\omega_K}\,.
	\end{align}
	Due to $p\ge 2$, for every $t\ge 0$ and a.e.\ $x\in \Omega$, it holds that
	\begin{align}
		\begin{aligned}
			\smash{(\delta + \abs{\bfA(x)}+ t)^{p-2} t^2\leq 2^{p-2}\,( \mu_{\bfA}(x) \,t^2 +\, t^p)\leq 2^{p-2}\, (\delta + \abs{\bfA(x)} + t)^{p-2} t^2\,,}
		\end{aligned}\label{lem:key.3}
	\end{align}
	which, appealing to $\varphi_{\smash{\vert\bfA(x)\vert}}(t)\sim (\delta + \abs{\bfA(x)}+ t)^{p-2} t^2$ uniformly with respect to all $t\ge 0$ and a.e.\ $x\in \Omega$,
	proves that $L^p_0(\Omega)\cap L^2_0(\Omega;\mu )=L^{\varphi_{\smash{\vert\bfA\vert}}}_0(\Omega)$. 
	Therefore, due to \eqref{lem:key.3}, for every $K\in \mathcal{T}_h$, from \eqref{lem:shifted_modular_Pi_div.1} and \eqref{lem:shifted_modular_Pi_div.2}, we conclude that
	\begin{align*}
		\rho_{\varphi_{\smash{\vert\bfA\vert}},K}(\nabla \Pi_h^{\smash{V}} \bfz)&\leq c\,\big(	\|\nabla \Pi_h^{\smash{V}} \bfz\|_{2,\mu_{\bfA},K}^2+	\|\nabla \Pi_h^{\smash{V}} \bfz\|_{p,K}^p\big)\\&\leq c\,\big(\|\nabla \bfz\|_{2,\mu_{\bfA},\omega_K}^2+\|\nabla\bfz\|_{p,\omega_K}^p\big) \\&\leq c\,\rho_{\varphi_{\smash{\vert\bfA\vert}},\omega_K}(\nabla \bfz)\,,
	\end{align*}
	which is the claimed stability estimate.
\end{proof}
Eventually, having Lemma \ref{lem:shifted_modular_Pi_div} and Lemma \ref{lem:convex_conjugation_ineq} at hand, we are in the position to prove Lemma~\ref{lem:discrete_convex_conjugation_ineq_FE}.

\begin{proof}[Proof (of Lemma
  \ref{lem:discrete_convex_conjugation_ineq_FE}).] Using Lemma
  \ref{lem:convex_conjugation_ineq}, \eqref{eq:div_preserving}
  (\textit{cf}.\ Assumption \ref{ass:proj-div} (i)), and Lemma \ref{lem:shifted_modular_Pi_div}, we find that
	\begin{align*}
		\rho_{(\varphi_{\smash{\vert\bfA\vert}})^*,\Omega}(z_h )&\leq \sup_{\bfz\in  \Vo}{\big[(z_h ,\textup{div}\,\bfz)_\Omega-\smash{\tfrac{1}{c}}\,\rho_{\varphi_{\smash{\vert\bfA\vert}},\Omega}(\nabla \bfz)\big]}
		\\&= \sup_{\bfz\in  \Vo}{\big[(z_h ,\divo\Pi_h^{\smash{V}}\bfz)_\Omega-\smash{\tfrac{1}{c}}\,\rho_{\varphi_{\smash{\vert\bfA\vert}},\Omega}(\nabla \bfz)\big]}
		\\&\leq \sup_{\bfz\in  \Vo}{\big[(z_h ,\divo\Pi_h^{\smash{V}}\bfz)_\Omega-\smash{\tfrac{1}{c}}\,\rho_{\varphi_{\smash{\vert\bfA\vert}},\Omega}(\nabla \Pi_h^{\smash{V}} \bfz)\big]}
		\\&\leq \sup_{\bfz_h\in \Vo_h}{\big[(z_h ,\divo\bfz_h)_\Omega-\smash{\tfrac{1}{c}}\,\rho_{\varphi_{\smash{\vert\bfA\vert}},\Omega}(\nabla \bfz_h)\big]}\,,
	\end{align*}
	which is the claimed discrete convex conjugation formula.
\end{proof}

Before we can move on to the proof of Theorem \ref{thm:error_pressure_optimal}, we need the following approximation property result for the operators $\{\Pi_h^Q\}_{h>0}$ in terms of the shifted modular $\rho_{(\phi_{\smash{\vert\bfD \bfv\vert }})^*,\Omega}$.

\begin{lemma}\label{lem:shifted_Q_approx} Let $p\in [2,\infty)$ and $\delta\ge 0$. Moreover, 
	let ${\bfA\in (L^p(\Omega))^{d\times d}}$ with  $\mu_{\bfA} \coloneqq (\delta+\vert\overline{\bfA}\vert)^{p-2}\in A_2(\mathbb{R}^d)$. 
	Then, there exists a constant $c>0$, depending on $p$, $\ell$, $\Omega$, $\kappa_0$, and $[\mu_{\bfA}]_{A_2(\mathbb{R}^d)}$, such that for every $q\in \Qo$, it holds that
	\begin{align*}
		\rho_{(\phi_{\smash{\vert\bfA\vert }})^*,\Omega}(q-\Pi_h^Qq-\langle \Pi_h^Q q\rangle_\Omega)\leq \begin{cases}
			c\,h^{p'}\rho_{\phi^*,\Omega}(\nabla q)&\text{ if }q\in W^{1,p'}(\Omega)\,,\\
			c\,h^2\,\|\nabla q\|_{\mu_{\bfA}^{-1},2,\Omega}^2&\text{ if } \nabla q\in (L^2(\Omega;\mu_{\bfA}^{-1}))^d\,.
		\end{cases}
	\end{align*}
	In particular, we have that
	\begin{align*}
		\inf_{z_h\in \Qo_h}{	\rho_{(\phi_{\smash{\vert\bfA\vert }})^*,\Omega}(q-z_h)}\leq \begin{cases}
			c\,h^{p'}\rho_{\phi^*,\Omega}(\nabla q)&\text{ if }q\in W^{1,p'}(\Omega)\,,\\
			c\,h^2\,\|\nabla q\|_{\mu_{\bfA}^{-1},2,\Omega}^2&\text{ if }  \nabla q\in (L^2(\Omega;\mu_{\bfA}^{-1}))^d\,.
		\end{cases}
	\end{align*}
\end{lemma}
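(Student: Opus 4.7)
The plan is to exploit two uniform pointwise upper bounds on the conjugate shifted N-function $(\phi_a)^*$ and thereby reduce the claim to (standard, respectively, weighted) local approximation estimates for $\Pi_h^Q$. Since $p\ge 2$, the defining identity $\phi_a'(t)=(\delta+a+t)^{p-2}t$ yields $\phi_a'(t)\ge t^{p-1}$ and $\phi_a'(t)\ge\mu_a\,t$ uniformly in $a\ge 0$; integration gives
\begin{align*}
\phi_a(t)\ge\tfrac{1}{p}\,t^p\qquad\text{and}\qquad \phi_a(t)\ge\tfrac{1}{2}\,\mu_a\,t^2\qquad\text{for all }t\ge 0\,,
\end{align*}
hence by convex duality $(\phi_a)^*(s)\le \phi^*(s)$ and $(\phi_a)^*(s)\le c\,\mu_a^{-1} s^2$ for all $s\ge 0$, with constants depending only on $p$. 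Setting $z_h\coloneqq\Pi_h^Q q-\langle \Pi_h^Q q\rangle_\Omega\in\Qo_h$ (which equals the argument of $\rho$ in the statement up to the sign of the constant mean correction), these two bounds separate the claim into the two asserted regimes.

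In the first case $q\in W^{1,p'}(\Omega)$, the bound $(\phi_{\vert\bfA\vert})^*(s)\le\phi^*(s)$ gives $\rho_{(\phi_{\vert\bfA\vert})^*,\Omega}(q-z_h)\le\rho_{\phi^*,\Omega}(q-z_h)$, so it suffices to prove $\rho_{\phi^*,\Omega}(q-z_h)\le c\,h^{p'}\rho_{\phi^*,\Omega}(\nabla q)$. Since $\langle q\rangle_\Omega=0$, the mean-correction is controlled by $\vert\langle \Pi_h^Q q-q\rangle_\Omega\vert\le c\,\vert\Omega\vert^{-1/p}\|q-\Pi_h^Q q\|_{p',\Omega}$, and can be absorbed into the leading $\|q-\Pi_h^Q q\|_{p',\Omega}$ contribution. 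Since $\mathbb{R}\subseteq Q_h$, $\Pi_h^Q$ preserves constants, and the local $L^1$-stability (Assumption \ref{ass:PiY}) combined with standard Orlicz-approximation theory (\textit{cf}.\ \cite{dr-interpol}) delivers the element-wise estimate $\rho_{\phi^*,K}(q-\Pi_h^Q q)\le c\,\rho_{\phi^*,\omega_K}(h_K\nabla q)$. Because $p'\le 2$, one has $\phi^*(h\,t)\le c\,h^{p'}\phi^*(t)$ for every $h\in(0,1]$ and $t\ge 0$ (both behaviors $\phi^*(t)\sim t^2/\delta^{p-2}$ near zero and $\phi^*(t)\sim t^{p'}$ near infinity yield this bound, as $h^2\le h^{p'}$ for $h\le 1$), so $\rho_{\phi^*,\omega_K}(h_K\nabla q)\le c\,h_K^{p'}\rho_{\phi^*,\omega_K}(\nabla q)$; summation over $K\in\mathcal{T}_h$ concludes the first case.

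In the second case $\nabla q\in (L^2(\Omega;\mu_\bfA^{-1}))^d$, the bound $(\phi_{\vert\bfA\vert})^*(s)\le c\,\mu_\bfA^{-1}s^2$ reduces the claim to the weighted approximation estimate $\|q-\Pi_h^Q q\|_{2,\mu_\bfA^{-1},K}^2\le c\,h_K^2\,\|\nabla q\|_{2,\mu_\bfA^{-1},\omega_K}^2$, together with the handling of the mean-correction through Cauchy--Schwarz and the global $A_2$-estimate $\|\mu_\bfA\|_{1,\Omega}\|\mu_\bfA^{-1}\|_{1,\Omega}\le\vert\Omega\vert^2[\mu_\bfA]_{A_2(\mathbb{R}^d)}$. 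The weighted element estimate is proved in direct analogy with Lemma \ref{lem:shifted_modular_Pi_div}: decompose $q-\Pi_h^Q q=(q-c_K)-\Pi_h^Q(q-c_K)$ where $c_K$ is the $\mu_\bfA^{-1}$-weighted mean of $q$ over the ball $B_K\coloneqq B_{\textup{diam}(\omega_K)}^d(x_K)\supseteq\omega_K$, and estimate the discrete piece $\Pi_h^Q(q-c_K)$ on $K$ by combining a local inverse estimate, the local $L^1$-stability of $\Pi_h^Q$, Hölder's inequality with the splitting $\mu_\bfA^{1/2}\mu_\bfA^{-1/2}$, and the $A_2$-inequality $\langle\mu_\bfA\rangle_{B_K}\langle\mu_\bfA^{-1}\rangle_{B_K}\le[\mu_\bfA]_{A_2(\mathbb{R}^d)}$ — exactly the chain of steps executed in \eqref{lem:shifted_modular_Pi_div.1}. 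The continuous piece $\|q-c_K\|_{2,\mu_\bfA^{-1},B_K}$ is then controlled via the $A_2(\mathbb{R}^d)$-weighted Poincaré inequality on balls (classical, with constant depending only on $[\mu_\bfA^{-1}]_{A_2(\mathbb{R}^d)}=[\mu_\bfA]_{A_2(\mathbb{R}^d)}$), yielding $\|q-c_K\|_{2,\mu_\bfA^{-1},B_K}\le c\,h_K\,\|\nabla q\|_{2,\mu_\bfA^{-1},B_K}$; summation over $K\in\mathcal{T}_h$ completes the second case.

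The main technical obstacle is the weighted estimate in Case 2. Unlike Lemma \ref{lem:shifted_modular_Pi_div}, whose argument $\nabla\Pi_h^{\smash{V}}\bfz$ is already a piecewise polynomial so that inverse estimates handle everything, here $q-\Pi_h^Q q$ genuinely contains the non-polynomial function $q$, forcing one to supplement the inverse-estimate-plus-$L^1$-stability chain by the $A_2$-weighted Poincaré inequality on balls — this is the new analytical ingredient. The ``in particular'' statement is then immediate, since by construction $z_h\in\Qo_h$ is an admissible competitor in the infimum.
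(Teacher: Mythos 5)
The paper itself does not prove this lemma; it defers to \cite[Lem.\ 23]{KR24pressure}, so a line-by-line comparison with the paper's own argument is not possible here. That said, your strategy is the natural one and mirrors the dual of the $\phi_a(t)\sim\mu_a t^2+t^p$ equivalence that the paper itself exploits in \eqref{lem:key.3}: your two pointwise bounds $(\phi_a)^*(s)\le\phi^*(s)$ and $(\phi_a)^*(s)\le c\,\mu_a^{-1}s^2$ are exactly the conjugate formulation, they split the claim into the two regimes, and the handling of Case~1 (unweighted Orlicz approximation plus $\phi^*(h\,t)\le c\,h^{p'}\phi^*(t)$ for $p\ge 2$, which the paper also uses in the proof of Corollary \ref{cor:error_pressure_optimal}) is correct. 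One small logical slip: the bound $\phi_a(t)\ge\tfrac{1}{p}t^p$ does not imply $(\phi_a)^*(s)\le\phi^*(s)$; rather, the latter follows from the monotonicity of the shift for $p\ge 2$, namely $\phi_a'(t)=(\delta+a+t)^{p-2}t\ge(\delta+t)^{p-2}t=\phi'(t)$ and hence $\phi_a\ge\phi$. The bound $\phi_a\ge\tfrac{1}{p}t^p$ would only give $(\phi_a)^*(s)\le\tfrac{1}{p'}s^{p'}$, which is weaker than $\phi^*(s)$ for small $s$ when $\delta>0$.

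The one genuine issue is in Case~2, where you invoke the $A_2$-weighted Poincar\'e inequality \emph{on the ball $B_K=B^d_{\operatorname{diam}(\omega_K)}(x_K)$}. For elements $K$ touching $\partial\Omega$ this ball extends outside $\Omega$, where $q$ is not defined, so the Poincar\'e step as written is not meaningful. The analogous estimate \eqref{lem:shifted_modular_Pi_div.1} in the paper does not have this problem, because there the ball $B_K$ is used only to compare averages of the weight $\mu_{\bfA}$ (which \emph{is} globally defined via $\overline{\bfA}$), never to run a Poincar\'e inequality for the approximated function. To repair your argument you should apply the $A_2$-weighted Poincar\'e inequality on $\omega_K\subseteq\Omega$ (or on $K$) rather than on $B_K$; this is legitimate because shape-regular patches are John domains with John constant controlled by the chunkiness $\kappa_0$, and weighted Poincar\'e inequalities on John domains hold with constants depending only on $d$, $[\mu_{\bfA}^{-1}]_{A_2(\mathbb{R}^d)}=[\mu_{\bfA}]_{A_2(\mathbb{R}^d)}$, and the John constant, but this requires an explicit appeal to that theory rather than to the classical ball version. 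With that adjustment the proof closes, and the ``in particular'' step is immediate as you say.
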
\vspace*{-5mm}

\begin{proof}
		See \cite[Lem.\  23]{KR24pressure}.
\end{proof}\newpage

\begin{proof}[Proof (of Theorem \ref{thm:error_pressure_optimal}).] 
	Appealing to Lemma \ref{lem:discrete_convex_conjugation_ineq_FE}, there exists a constant $c>0$ such
	that for every $z_h\in \Qo_h$, it holds that  
	\begin{align}
		\label{thm:error_pressure_optimal.1}
		\rho_{(\varphi_{\smash{\vert\bfD\bfv\vert}})^*,\Omega}(z_h)\leq \sup_{\bfz_h\in \Vo_h}{\big[(z_h,\divo\bfz_h)_\Omega-\smash{\tfrac{1}{c}}\,\rho_{\varphi_{\smash{\vert\bfD\bfv\vert}},\Omega}(\nabla\bfz_h)\big]}\,.
	\end{align}
	On the other hand, in view of \eqref{eq:FE_Qh}, \eqref{eq:q1}
        we have, for every $\bfz_h\in \Vo_h$, that
	\begin{align}\label{thm:error_pressure_optimal.2}
		\begin{aligned}
			(q_h-q,\divo \bfz_h)_\Omega
			&=(\SSS(\bfD \bfv_h) - \SSS(\bfD
			\bfv),\bfD\bfz_h)_\Omega
			+\big[b(\bfv_h,\bfv_h,\bfz_h)-b(\bfv,\bfv,\bfz_h)\big]
			\\&\eqqcolon I_h^1+ I_h^2\,.
		\end{aligned}
	\end{align}		
	So, let us next estimate $I_h^1$ and $I_h^2$ for arbitrary $\bfz_h\in \Vo_h$:
	
	\textit{ad $I_h^1$}.
	Using the $\varepsilon$-Young inequality \eqref{ineq:young} with $\psi=\varphi_{\smash{\vert\bfD\bfv\vert}}$, \eqref{eq:hammera}, and Theorem \ref{thm:error_FE}, we find that
	\begin{align}\label{thm:error_pressure_optimal.3}
		\begin{aligned}
			\vert I_h^1\vert &\leq c_\varepsilon\,\rho_{(\varphi_{\smash{\vert\bfD\bfv\vert}})^*,\Omega}(\bfS(\bfD\bfv_h)-\bfS(\bfD\bfv))+\varepsilon\,\rho_{\varphi_{\smash{\vert\bfD\bfv\vert}},\Omega}(\bfD\bfz_h)\\&\leq
			c_\varepsilon\,\norm{\bfF(\bfD\bfv)
				-\bfF(\bfD\bfv_h)}_2^2+\varepsilon\,\rho_{\varphi_{\smash{\vert\bfD\bfv\vert}},\Omega}(\nabla\bfz_h)
			\,.
			\\&\leq
			c_\varepsilon\,\big(h^2 \,  (\|\nabla	\bfF(\bfD \bfv)
            \|_{2,\Omega}^2 + \|\nabla q\|_{p',\Omega}^2 )
			+c\,\rho_{(\varphi_{\vert \bfD\bfv\vert})^*,\Omega}(h\,\nabla q)\big)+\varepsilon\,\rho_{\varphi_{\smash{\vert\bfD\bfv\vert}},\Omega}(\nabla\bfz_h)
			\,.
		\end{aligned}
	\end{align}
	
	\textit{ad $I_h^2$}. Abbreviating $\bfe_h\coloneqq \bfv_h-\bfv\in \Vo$, 
	there holds the decomposition
	\begin{align}\label{thm:error_pressure_optimal.4}
		\begin{aligned}
		I_h^2 &=b(\bfv,\bfv-\Pi_h^V\bfv,\Pi_h^V\bfz_h)+b(\bfe_h,\Pi_h^V\bfv,\bfz_h)+b(\bfv_h,\Pi_h^V\bfe_h,\bfz_h)\\&\eqqcolon I_h^{2,1}+ I_h^{2,2}+ I_h^{2,3}\,,
	\end{aligned}
	\end{align}
	so that it is left to estimate $I_h^{2,1}$, $I_h^{2,2}$,  and $I_h^{2,3}$:
	
	\textit{ad $I_h^{2,1}$}. The definition of $b\colon [\Vo]^3\to \mathbb{R}$ yields that
	\begin{align*}
		I_h^{2,1}=(\bfz_h\otimes \bfv,\nabla (\bfv-\Pi_h^V\bfv))_\Omega+((\bfv-\Pi_h^V\bfv)\otimes \bfv,\nabla \bfz_h)_\Omega\,.
	\end{align*}
	Thus, using Hölder's,
	Poincar\'e's, and  Korn's inequality,  the  $\varepsilon$-Young inequality \eqref{ineq:young} with $\psi=\vert\cdot\vert^2$,  that $\vert \bfA-\bfB\vert^2\leq \varphi_{\smash{\vert \bfA\vert}}(\vert \bfA-\bfB\vert)\sim \vert \bfF(\bfA)-\bfF(\bfB)\vert^2$  and $\vert \bfA\vert^2\leq c\, \varphi_{\vert \bfB\vert}(\vert\bfA\vert)$ uniformly with respect to $\bfA,\bfB\in \mathbb{R}^{d\times d}$ (due to $p>2$), and the approximation properties of $\Pi_h^V$ (\textit{cf}.\ \cite[Thms. 3.4, 5.1]{bdr-phi-stokes}), we find that
	\begin{align}
		\label{thm:error_pressure_optimal.5}
		\begin{aligned}
		\vert I_h^{2,1}\vert &\leq 2\,\|\bfv\|_{\infty,\Omega}\|\bfv-\Pi_h^V\bfv\|_{1,2,\Omega}\|\bfz_h\|_{1,2,\Omega}
		\\&\leq c_\varepsilon\,\|\bfv\|_{\infty,\Omega}^2\|\bfD\bfv-\bfD\Pi_h^V\bfv\|_{2,\Omega}^2+\varepsilon\,c\,\|\nabla\bfz_h\|_{2,\Omega}^2
			\\&\leq c_\varepsilon\,\|\bfF(\bfD\bfv)-\bfF(\bfD\Pi_h^V\bfv)\|_{2,\Omega}^2+\varepsilon\,c\,\rho_{\varphi_{\smash{\vert \bfD\bfv\vert}},\Omega}(\nabla\bfz_h)
				\\&\leq c_\varepsilon\,h^2\,\|\nabla\bfF(\bfD\bfv)\|_{2,\Omega}^2+\varepsilon\,c\,\rho_{\varphi_{\smash{\vert \bfD\bfv\vert}},\Omega}(\nabla\bfz_h)\,.
					\end{aligned}
	\end{align}

	\textit{ad $I_h^{2,2}$}. The definition of $b\colon [\Vo]^3\to \mathbb{R}$  yields that
	\begin{align*}
		I_h^{2,2}=(\bfz_h\otimes \bfe_h,\nabla\Pi_h^V\bfv)_\Omega+(\Pi_h^V\bfv\otimes \bfe_h,\nabla \bfz_h)_\Omega\,.
	\end{align*}
	Thus, using Hölder's, Sobolev's,  and Korn's inequality, the stability properties of $\Pi_h^V$ (\textit{cf}.\ \cite[Cor.~4.8]{dr-interpol}), the  $\varepsilon$-Young inequality \eqref{ineq:young} with $\psi=\vert\cdot\vert^2$, 
 that $\vert \bfA-\bfB\vert^2\leq \varphi_{\smash{\vert \bfA\vert}}(\vert \bfA-\bfB\vert)\sim \vert \bfF(\bfA)-\bfF(\bfB)\vert^2$  and $\vert \bfA\vert^2\leq c\, \varphi_{\vert \bfB\vert}(\vert\bfA\vert)$ uniformly with respect to all $\bfA,\bfB\in \mathbb{R}^{d\times d}$ (due to $p>2$), and \eqref{eq:hammera},~we~find~that 
	\begin{align}
			\label{thm:error_pressure_optimal.6}
		\begin{aligned}
		\vert I_h^{2,2}\vert &\leq (\|\Pi_h^V\bfv\|_{4,\Omega}+\|\nabla\Pi_h^V\bfv\|_{2,\Omega})\|\bfe_h\|_{4,\Omega}(\|\bfz_h\|_{4,\Omega}+\|\nabla\bfz_h\|_{2,\Omega})
		\\&\leq c\,\|\nabla\bfv\|_{2,\Omega}\|\bfD\bfe_h\|_{2,\Omega} \|\nabla\bfz_h\|_{2,\Omega}
		\\&\leq c_\varepsilon\,c_0^2\,\|\bfD\bfe_h\|_{2,\Omega}^2+\varepsilon\,c\, \|\nabla\bfz_h\|_{2,\Omega}^2
		\\&\leq c_\varepsilon\,\|\bfF(\bfD\bfv_h)-\bfF(\bfD\bfv)\|_{2,\Omega}^2+\varepsilon\,c\,\rho_{\varphi_{\smash{\vert \bfD\bfv\vert}},\Omega}(\nabla\bfz_h)
			\\&\leq c_\varepsilon\,\big (h^2 \,  (\|\nabla	\bfF(\bfD \bfv)
            \|_{2,\Omega}^2 + \|\nabla q\|_{p',\Omega}^2
            )+\rho_{(\phi_{\smash{\vert\bfD \bfv\vert
                }})^*,\Omega}(h\,\nabla q)\big )+\varepsilon\,c\,\rho_{\varphi_{\smash{\vert \bfD\bfv\vert}},\Omega}(\nabla\bfz_h)\,.
			\end{aligned}
	\end{align}
	
		\textit{ad $I_h^{2,3}$}. The definition of $b\colon [\Vo]^3\to \mathbb{R}$  yields that
	\begin{align*}
	\smash{	I_h^{2,3}=(\bfz_h\otimes \bfv_h,\nabla\Pi_h^V\bfe_h)_\Omega+(\Pi_h^V\bfe_h\otimes \bfv_h,\nabla \bfz_h)_\Omega\,.}
	\end{align*}
	Thus, using Hölder's, Sobolev's, and Korn's inequality the stability properties of $\Pi_h^V$ (\textit{cf}.\ \cite[Cor.~4.8]{dr-interpol}), the  $\varepsilon$-Young inequality \eqref{ineq:young} with $\psi=\vert\cdot\vert^2$, 
and  that $\vert \bfA-\bfB\vert^2\leq \varphi_{\smash{\vert \bfA\vert}}(\vert \bfA-\bfB\vert)\sim \vert \bfF(\bfA)-\bfF(\bfB)\vert^2$ and $\vert \bfA\vert^2\leq c\, \varphi_{\vert \bfB\vert}(\vert\bfA\vert)$  uniformly with respect to all $\bfA,\bfB\in \mathbb{R}^{d\times d}$ (due~to~$p>2$), we find that 
	\begin{align}
		\label{thm:error_pressure_optimal.7}
		\begin{aligned}
			\vert I_h^{2,3}\vert &\leq \|\bfv_h\|_{4,\Omega}(\|\Pi_h^V\bfe_h\|_{4,\Omega}+\|\nabla\Pi_h^V\bfe_h\|_{2,\Omega})(\|\bfz_h\|_{4,\Omega}+\|\nabla\bfz_h\|_{2,\Omega})
			\\&\leq c\,\|\nabla\bfv_h\|_{2,\Omega}\|\bfD\bfe_h\|_{2,\Omega} \|\nabla\bfz_h\|_{2,\Omega}
			\\&\leq c_\varepsilon\,\|\bfD\bfe_h\|_{2,\Omega}^2+\varepsilon\,c\, \|\nabla\bfz_h\|_{2,\Omega}^2
			\\&\leq c_\varepsilon\,\|\bfF(\bfD\bfv_h)-\bfF(\bfD\bfv)\|_{2,\Omega}^2+\varepsilon\,c\,\rho_{\varphi_{\smash{\vert \bfD\bfv\vert}},\Omega}(\nabla\bfz_h)
			\\&\leq c_\varepsilon\,\big (h^2 \,  (\|\nabla	\bfF(\bfD \bfv)
            \|_{2,\Omega}^2 + \|\nabla q\|_{p',\Omega}^2 )+\rho_{(\phi_{\smash{\vert\bfD \bfv\vert }})^*,\Omega}(h\,\nabla q)\big)+\varepsilon\,c\,\rho_{\varphi_{\smash{\vert \bfD\bfv\vert}},\Omega}(\nabla\bfz_h)\,.
		\end{aligned}
	\end{align}	
	Eventually, combining \eqref{thm:error_pressure_optimal.5}--\eqref{thm:error_pressure_optimal.7} in \eqref{thm:error_pressure_optimal.4}, we arrive at
	\begin{align}\label{thm:error_pressure_optimal.8}
		\vert I_h^2\vert \leq c_\varepsilon\,\big (h^2 \,  (\|\nabla	\bfF(\bfD \bfv)
            \|_{2,\Omega}^2 + \|\nabla q\|_{p',\Omega}^2 )+\rho_{(\phi_{\smash{\vert\bfD \bfv\vert }})^*,\Omega}(h\,\nabla q)\big)+\varepsilon\, c\,\rho_{\varphi_{\smash{\vert \bfD\bfv\vert}},\Omega}(\nabla\bfz_h)\,.
	\end{align}
	Putting it all together, from \eqref{thm:error_pressure_optimal.3} and  \eqref{thm:error_pressure_optimal.8} in \eqref{thm:error_pressure_optimal.2}, for every $\bfz_h\in \Vo_h$, we conclude that
	\begin{align}\label{thm:error_pressure_optimal.11}
		(q_h-q,\divo \bfz_h)_\Omega\leq  c_\varepsilon\,\big (h^2 \,  (\|\nabla	\bfF(\bfD \bfv)
            \|_{2,\Omega}^2 + \|\nabla q\|_{p',\Omega}^2 )+\rho_{(\phi_{\smash{\vert\bfD \bfv\vert }})^*,\Omega}(h\,\nabla q)\big)+\varepsilon\, c\,\rho_{\varphi_{\smash{\vert \bfD\bfv\vert}},\Omega}(\nabla\bfz_h)\,.
	\end{align}
	Therefore, using Lemma
        \ref{lem:discrete_convex_conjugation_ineq_FE} and
        \eqref{thm:error_pressure_optimal.11} for sufficiently small $\varepsilon>0$, once more, the $\varepsilon$-Young's inequality \eqref{ineq:young} with $\psi = \varphi_{\smash{\vert
			\bfD\bfv\vert}}$,
	for sufficiently small $\varepsilon>0$,  for every ${z_h\in \Qo_h}$, we find that
	\begin{align}\label{eq:fast_pressure_optimal}
          \begin{aligned}
            \rho_{(\varphi_{\smash{\vert\bfD\bfv\vert}})^*,\Omega}(q_h-q)
            &\leq c\,\rho_{(\varphi_{\smash{\vert
                  \bfD\bfv\vert}})^*,\Omega}(q_h-z_h)+c\,
            \rho_{(\varphi_{\smash{\vert
                  \bfD\bfv\vert}})^*,\Omega}(q-z_h)
            \\
            &\leq c\, \sup_{\bfz_h\in
              \Vo_h}{\big[(q_h-z_h,\divo\bfz_h)_\Omega-\smash{\tfrac{1}{c}}\,\rho_{\varphi_{\smash{\vert
                    \bfD\bfv\vert}},\Omega}(\nabla
              \bfz_h)\big]}+c\,\rho_{(\varphi_{\smash{\vert
                  \bfD\bfv\vert}})^*,\Omega}(q-z_h)
            \\[-1mm]
            &\leq c\, \sup_{\bfz_h\in
              \Vo_h}{\big[(q-z_h,\divo\bfz_h)_\Omega-\smash{\tfrac{1}{c}}\,\rho_{\varphi_{\smash{\vert
                    \bfD\bfv\vert}},\Omega}(\nabla
              \bfz_h)\big]}+c\,\rho_{(\varphi_{\smash{\vert
                  \bfD\bfv\vert}})^*,\Omega}(q-z_h)
            \\[-1mm]
            &\quad +c\,\big (h^2 \,  (\|\nabla	\bfF(\bfD \bfv)
            \|_{2,\Omega}^2 + \|\nabla q\|_{p',\Omega}^2 )+\rho_{(\phi_{\smash{\vert\bfD \bfv\vert }})^*,\Omega}(h\,\nabla q)\big)
            \\
            &\leq c\,h^2 \,\big (\|\nabla\bfF(\bfD\bfv)\|_{2,\Omega}^2 + \|\nabla q\|_{p',\Omega}^2 \big)
            +c\,\rho_{(\varphi_{\smash{\vert
                  \bfD\bfv\vert}})^*,\Omega}(h\,\nabla
            q)+c\,\rho_{(\varphi_{\smash{\vert
                  \bfD\bfv\vert}})^*,\Omega}(q-z_h)\,.\hspace{-10mm}
          \end{aligned}
	\end{align}
	Eventually, taking in \eqref{eq:fast_pressure_optimal} the infimum with respect to ${z_h\in \Qo_h}$,  
	we conclude the proof.
\end{proof}\vspace*{-2.5mm}

\begin{proof}[Proof (of Corollary \ref{cor:error_pressure_optimal}).]\enlargethispage{7mm}
	Follows from Theorem \ref{thm:error_pressure_optimal} by means
        of Lemma \ref{lem:shifted_Q_approx} and the properties of $(\varphi_{\smash{\vert
                  \bfD\bfv\vert}})^*$, namely $(\varphi_a)^*(h\,t)\hspace{-0.1em} \le \varphi^*(h\,t)\hspace{-0.1em} \le\hspace{-0.1em}
    c\, h^{p'} \varphi^*(t)$ for all
    $t,a\hspace{-0.1em}\ge\hspace{-0.1em} 0$,
    valid~for~${p\hspace{-0.1em}>\hspace{-0.1em}2}$
    (cf.~\cite{bdr-phi-stokes}) and $(\varphi_a)^*(h\,t)\hspace{-0.1em} \sim \big( (\delta+a)^{p-1}
  +h\,t\big )^{p'-2}\, h^2\, t^2 \hspace{-0.1em}
  \le\hspace{-0.1em}(\delta+a)^{2-p}\, h^2\, t^2 $ for all
  $t,a\hspace{-0.1em}\ge\hspace{-0.1em} 0$, since $p>2$.
\end{proof}\newpage

\section{Numerical experiments}\label{sec:experiments} 

\hspace*{5mm}In this section, we complement the theoretical findings of  Section \ref{sec:fe} via numerical experiments:~first, we carry out numerical experiments to confirm the quasi-optimality of the \textit{a priori} error estimates in Corollary \ref{cor:error_pressure_optimal} with respect to the Muckenhoupt regularity condition~\eqref{eq:reg-assumption}; second, we carry out numerical experiments to examine the Muckenhoupt regularity condition \eqref{eq:reg-assumption} for its sharpness.~Before we do so, we first give some implementation details.\enlargethispage{7.5mm} 

\subsection{Implementation details} 
\hspace*{5mm}All experiments were carried out using the finite element software~\mbox{\texttt{FEniCS}} (version 2019.1.0, \textit{cf}.~\cite{LW10}). 
All graphics were generated with the help of the \texttt{Matplotlib} library (version 3.5.1,~\textit{cf}.~\cite{Hun07}). 
In the numerical experiments addressing the FE formulation Problem (\hyperlink{Qhfe}{Q$_h$}) (or Problem (\hyperlink{Phfe}{P$_h$}), resp.),
we deploy both the MINI element (\textit{cf}.\ Remark \ref{FEM.V}(i)) and the Taylor--Hood element (\textit{cf}.\ Remark \ref{FEM.V}(ii)).

We approximate discrete solutions $(\bfv_h,q_h)^{\top}\in \Vo_h\times \Qo_h$ of Problem (\hyperlink{Qhfe}{Q$_h$}) using the Newton solver from \mbox{\texttt{PETSc}} (version 3.17.3, \textit{cf}.\ \cite{LW10}), with absolute tolerance of $\tau_{abs}= 1\textrm{e}{-}8$ and relative tolerance~of~${\tau_{rel}=1\textrm{e}{-}10}$. The linear system emerging in each Newton iteration is solved using a sparse direct solver from \texttt{MUMPS} (version~5.5.0,~\textit{cf}.~\cite{mumps}).  In the implementation, the uniqueness of the pressure is enforced via adding a zero mean condition. Each integral that does contain non-discrete functions (\textit{e.g.}, the right-hand side integral in Problem (\hyperlink{Qhfe}{Q$_h$}) and Problem  (\hyperlink{Phfe}{P$_h$})) or the error quantities \eqref{error_quantities}) is discretized using a quadrature rule by G.\ Strang and G.\ Fix (\textit{cf}.\ \cite{strang_fix}) with degree of precision 6 (\textit{i.e.}, 12 quadrature points on each triangle) in two dimensions (\textit{i.e.}, $d=2$) and  using
the Keast rule \texttt{(KEAST7)} (\textit{cf}.\ \cite{keast}) with degree of precision 6 (\textit{i.e.}, employing
24 quadrature points on each element) in three dimensions (\textit{i.e.},~$d=3$).\vspace*{-0.5mm}
\subsubsection{\itshape General experimental set-up}\vspace*{-0.5mm}

\hspace*{5mm}For  $\Omega=(0,1)^d$ we employ Problem (\hyperlink{Qhfe}{Q$_h$}) (or \mbox{Problem} (\hyperlink{Phfe}{P$_h$}), resp.) to approximate the system~\eqref{eq:p-navier-stokes} with  $\bfS\colon  \mathbb{R}^{d\times d}\to\smash{\mathbb{R}^{d\times d}_{\mathrm{sym}}}$, for  every $\bfA\in\mathbb{R}^{d\times d}$~defined~via
\begin{align*}
	\bfS(\bfA) \coloneqq \mu_0\,(\delta+\vert \bfA^{\textup{sym}}\vert)^{p-2}\bfA^{\textup{sym}}\,,
\end{align*}  
where  $\delta\coloneqq 1.0\times 10^{-5}$, $\mu_0 \coloneqq \frac{1}{2}$, and $ p\in [2.25, 3.5]$.

We construct an initial triangulation $\pazocal{T}_{h_0}$, where $h_0=1$, for $d\in \{2,3\}$, in the following way: 

\begin{itemize}[{($d=3$)}]
	\item[($d=2$)] We subdivide $\Omega=(0,1)^2$ into four triangles along its diagonals.
	\item[($d=3$)] We subdivide $\Omega=(0,1)^3$ into six tetrahedron forming a Kuhn triangulation $\mathcal{T}_0$ (\textit{cf}.\ \cite{kuhn}). 
\end{itemize}

Then, finer triangulations $\pazocal{T}_{h_i}$, $i=1,\dots,6$, where $h_{i+1}=\frac{h_i}{2}$ for all $i=0,\dots,5$, are 
obtained by  applying the red-refinement rule (\textit{cf}.\ \cite[Def.~4.8(i)]{Ba16}). 

Then, for $i = 0,\ldots,6$, we compute discrete solutions 
$(\bfv_i,q_i)^{\top} \coloneqq  (\bfv_{h_i},q_{h_i})^{\top}\in \smash{\Vo_{h_i}\times  \Qo_{h_i}}$ of Problem (\hyperlink{Qhfe}{Q$_{h_i}$}) 
and the error quantities 
\begin{align}\label{error_quantities} 
	\left.\begin{aligned}
		e_{\bfF,i}&\coloneqq \|\bfF(\bfD\bfv_i)-\bfF(\bfD\bfv)\|_{2,\Omega}\,,\\
		e_{q,i}^{\textup{norm}}&\coloneqq \|q_i-q\|_{p',\Omega}\,,\\[-0.5mm]
		e_{q,i}^{\textup{modular}}&\coloneqq (\rho_{(\varphi_{\smash{\vert\bfD\bfv\vert}})^*,\Omega}(q_i-q))^{\smash{1/2}}\,,
	\end{aligned}\quad\right\}\,,\quad i=0,\dots,6\,.
\end{align}

As  estimation  of  the  convergence rates, we compute the experimental order of convergence~(EOC)
\begin{align}
	\texttt{EOC}_i(e_i)\coloneqq\frac{\log(e_i/e_{i-1})}{\log(h_i/h_{i-1})}\,, \quad i=1,\dots,6\,,\label{eoc}
\end{align}
where for every $i= 0,\dots,6$, we denote by $e_i$ either $ \smash{e_{q,i}^{\textup{norm}}}$ or $\smash{e_{q,i}^{\textup{modular}}}$.

\subsubsection{\itshape Quasi-optimality of the \textit{a priori} error estimates derived in Corollary \ref{cor:error_pressure_optimal}}

\hspace*{5mm}To confirm the quasi-optimality of the \textit{a priori} error estimates derived in Corollary~\ref{cor:error_pressure_optimal},  we restrict  the two-dimensional case and choose the right-hand side $\bff \in (L^{p'}(\Omega))^2$ and the Dirichlet boundary data $\bfv_0\in (W^{1-1/p,p}(\partial\Omega))^2$  such that $\bfv\in V$ and $q \in \Qo$, for every $x\coloneqq (x_1,x_2)^\top\in \Omega$ defined via\vspace*{-0.5mm}
\begin{align}
	\bfv(x)\coloneqq \vert x\vert^{\beta} (x_2,-x_1)^\top\,, \qquad q(x)\coloneqq  \vert x\vert^{\gamma}-\langle\,\vert \cdot\vert^{\gamma}\,\rangle_\Omega\,,
\end{align}
are a solutions to  \eqref{eq:p-navier-stokes}. Since, in this case, we consider given inhomogeneous Dirichlet boundary data, we do not use Problem (\hyperlink{Qhfe}{Q$_h$}) (or Problem (\hyperlink{Phfe}{P$_h$}), resp.), but the consistent inhomogeneous generalization; \textit{i.e.}, we employ the inhomogeneous~generalization~of Problem (\hyperlink{Qhfe}{Q$_h$}) (or Problem (\hyperlink{Phfe}{P$_h$}), resp.) from \cite{JK23_inhom}.

Concerning the regularity of the velocity vector field, we choose $\beta=0.01$, which precisely yields that ${\bfF(\bfD\bfv)\in (W^{1,2}(\Omega))^{2\times 2}}$. Concerning the regularity of the pressure, we distinguish two~different~cases:
\begin{itemize}[{(Case 2)}]
	\item[\hypertarget{case_1}{(Case 1)}] We choose $\gamma= 1-2/p'+1.0\times 10^{-2}$, which just yields that $\smash{q \in
		W^{1,p'}(\Omega)}$;
	\item[\hypertarget{case_2}{(Case 2)}] We choose $\gamma= \beta(p-2)/2+1.0\times 10^{-2}$, which  just yields that $\nabla q \in
	(L^2(\Omega;\smash{\mu_{\bfD\bfv}^{-1}}))^2$.
\end{itemize} 
In both Case \hyperlink{case_1}{1} and Case \hyperlink{case_2}{2}, we have that\footnote{as to be expected in the two-dimensional case (\textit{cf}.\ Remark \ref{rem:muckenhoupt}(ii)).\vspace*{-5mm}} $\bfD\bfv\hspace*{-0.1em}\in\hspace*{-0.1em} (C^{0,\beta}(\overline{\Omega}))^{2\times 2}$ and, thus, ${\mu_{\bfD\bfv}\hspace*{-0.1em}\coloneqq\hspace*{-0.1em} (\delta+\vert \overline{\bfD\bfv}\vert)^{p-2}\hspace*{-0.1em}\in\hspace*{-0.1em} A_2(\mathbb{R}^2)}$~(\textit{cf}.\ Remark \ref{rem:muckenhoupt}(i)).
Hence, in Case \hyperlink{case_1}{1}, we can expect the
convergence rate $\texttt{EOC}_i(e_i^{\textup{modular}})\approx p'/2$, $i=1,\ldots,6$, while in Case \hyperlink{case_2}{2},  we can expect the convergence rate $\texttt{EOC}_i(e_i^{\textup{modular}})\approx 1$,~$i=1,\ldots,6$~(\textit{cf}.~Corollary~\ref{cor:error_pressure_optimal}).

For different values of $p\in \{2.25, 2.5, 2.75, 3, 3.25, 3.5\}$ and a
series of triangulations~$\mathcal{T}_{h_i}$, $i = 1,\dots,6$,
obtained by regular, global refinement as described above, the EOC is
computed and presented in Table~\ref{tab2}, and
Table~\ref{tab3}, 
respectively. In both Case \hyperlink{case_1}{1} and Case \hyperlink{case_2}{2}, 
we observe the expected a convergence rate of about $\texttt{EOC}_i(e_i^{\textup{modular}})\approx p'/2$, $i=1,\ldots,6$, (in Case \hyperlink{case_1}{1}) and ${\texttt{EOC}_i(e_i^{\textup{modular}})\approx 1}$,~${i=1,\dots, 6}$, (in Case \hyperlink{case_2}{2}).\enlargethispage{7mm}

\begin{table}[ht]
	\setlength\tabcolsep{3pt}
	\centering
	\begin{tabular}{c |c|c|c|c|c|c|c|c|c|c|c|c|} \cmidrule(){1-13}
		\multicolumn{1}{|c||}{\cellcolor{lightgray}$\gamma$}	
		& \multicolumn{6}{c||}{\cellcolor{lightgray}Case \hyperlink{case_1}{1}}   & \multicolumn{6}{c|}{\cellcolor{lightgray}Case \hyperlink{case_2}{2}}\\ 
		\hline 
		
		\multicolumn{1}{|c||}{\cellcolor{lightgray}\diagbox[height=1.1\line,width=0.11\dimexpr\linewidth]{\vspace{-0.6mm}$i$}{\\[-5mm] $p$}}
		& \cellcolor{lightgray}2.25 & \cellcolor{lightgray}2.5  & \cellcolor{lightgray}2.75  &  \cellcolor{lightgray}3.0 & \cellcolor{lightgray}3.25  & \multicolumn{1}{c||}{\cellcolor{lightgray}3.5} &  \multicolumn{1}{c|}{\cellcolor{lightgray}2.25}   & \cellcolor{lightgray}2.5  & \cellcolor{lightgray}2.75  & \cellcolor{lightgray}3.0  & \cellcolor{lightgray}3.25 &   \cellcolor{lightgray}3.5 \\ \hline\hline
		\multicolumn{1}{|c||}{\cellcolor{lightgray}$1$}             & 0.875 & 0.812 & 0.770 & 0.741 & 0.718 & \multicolumn{1}{c||}{0.699} & \multicolumn{1}{c|}{0.966} & 0.979 & 1.116 & 1.281 & 1.371 & 1.376 \\ \hline
		\multicolumn{1}{|c||}{\cellcolor{lightgray}$2$}             & 0.904 & 0.836 & 0.787 & 0.750 & 0.721 & \multicolumn{1}{c||}{0.699} & \multicolumn{1}{c|}{1.006} & 1.011 & 1.027 & 1.096 & 1.200 & 1.300 \\ \hline
		\multicolumn{1}{|c||}{\cellcolor{lightgray}$3$}             & 0.907 & 0.840 & 0.791 & 0.755 & 0.727 & \multicolumn{1}{c||}{0.704} & \multicolumn{1}{c|}{1.009} & 1.010 & 1.013 & 1.020 & 1.058 & 1.113 \\ \hline
		\multicolumn{1}{|c||}{\cellcolor{lightgray}$4$}             & 0.908 & 0.841 & 0.793 & 0.757 & 0.728 & \multicolumn{1}{c||}{0.706} & \multicolumn{1}{c|}{1.010} & 1.010 & 1.011 & 1.014 & 1.017 & 1.051 \\ \hline
		\multicolumn{1}{|c||}{\cellcolor{lightgray}$5$}             & 0.908 & 0.841 & 0.793 & 0.757 & 0.729 & \multicolumn{1}{c||}{0.707} & \multicolumn{1}{c|}{1.010} & 1.010 & 1.011 & 1.011 & 1.014 & 1.016 \\ \hline
		\multicolumn{1}{|c||}{\cellcolor{lightgray}$6$}             & 0.908 & 0.841 & 0.793 & 0.757 & 0.729 & \multicolumn{1}{c||}{0.707} & \multicolumn{1}{c|}{1.010} & 1.010 & 1.011 & 1.011 & 1.012 & 1.015 \\ \hline\hline
		\multicolumn{1}{|c||}{\cellcolor{lightgray} theory}         & 0.900 & 0.833 & 0.786 & 0.750 & 0.722 & \multicolumn{1}{c||}{0.700} & \multicolumn{1}{c|}{1.000} & 1.000 & 1.000 & 1.000 & 1.000 & 1.000 \\ \hline
	\end{tabular}
	\caption{Experimental order of convergence (MINI): $\textup{\texttt{EOC}}_i(e_{q,i}	^{\textup{modular}})$,~${i=1,\dots,6}$.}\label{tab2}\vspace*{-11mm}
\end{table}

\begin{table}[ht]
	\setlength\tabcolsep{3pt}
	\centering
	\begin{tabular}{c |c|c|c|c|c|c|c|c|c|c|c|c|} \cmidrule(){1-13}
		\multicolumn{1}{|c||}{\cellcolor{lightgray}$\gamma$}	
		& \multicolumn{6}{c||}{\cellcolor{lightgray}Case \hyperlink{case_1}{1}}   & \multicolumn{6}{c|}{\cellcolor{lightgray}Case \hyperlink{case_2}{2}}\\ 
		\hline 
		
		\multicolumn{1}{|c||}{\cellcolor{lightgray}\diagbox[height=1.1\line,width=0.11\dimexpr\linewidth]{\vspace{-0.6mm}$i$}{\\[-5mm] $p$}}
		& \cellcolor{lightgray}2.25 & \cellcolor{lightgray}2.5  & \cellcolor{lightgray}2.75  &  \cellcolor{lightgray}3.0 & \cellcolor{lightgray}3.25  & \multicolumn{1}{c||}{\cellcolor{lightgray}3.5} &  \multicolumn{1}{c|}{\cellcolor{lightgray}2.25}   & \cellcolor{lightgray}2.5  & \cellcolor{lightgray}2.75  & \cellcolor{lightgray}3.0  & \cellcolor{lightgray}3.25 &   \cellcolor{lightgray}3.5 \\ \hline\hline
		\multicolumn{1}{|c||}{\cellcolor{lightgray}$1$}             & 0.898 & 0.807 & 0.750 & 0.709 & 0.677 & \multicolumn{1}{c||}{0.652} & \multicolumn{1}{c|}{1.265} & 1.061 & 0.966 & 0.946 & 0.950 & 0.954 \\ \hline
		\multicolumn{1}{|c||}{\cellcolor{lightgray}$2$}             & 0.912 & 0.838 & 0.789 & 0.752 & 0.723 & \multicolumn{1}{c||}{0.700} & \multicolumn{1}{c|}{1.121} & 1.045 & 1.032 & 1.025 & 1.010 & 1.011 \\ \hline
		\multicolumn{1}{|c||}{\cellcolor{lightgray}$3$}             & 0.908 & 0.840 & 0.791 & 0.754 & 0.726 & \multicolumn{1}{c||}{0.703} & \multicolumn{1}{c|}{1.014} & 1.014 & 1.016 & 1.019 & 1.021 & 1.022 \\ \hline
		\multicolumn{1}{|c||}{\cellcolor{lightgray}$4$}             & 0.909 & 0.841 & 0.793 & 0.757 & 0.728 & \multicolumn{1}{c||}{0.706} & \multicolumn{1}{c|}{1.006} & 1.010 & 1.012 & 1.012 & 1.014 & 1.014 \\ \hline
		\multicolumn{1}{|c||}{\cellcolor{lightgray}$5$}             & 0.909 & 0.841 & 0.793 & 0.757 & 0.729 & \multicolumn{1}{c||}{0.707} & \multicolumn{1}{c|}{1.008} & 1.010 & 1.011 & 1.011 & 1.012 & 1.014 \\ \hline
		\multicolumn{1}{|c||}{\cellcolor{lightgray}$6$}             & 0.909 & 0.841 & 0.793 & 0.757 & 0.729 & \multicolumn{1}{c||}{0.707} & \multicolumn{1}{c|}{1.009} & 1.010 & 1.011 & 1.011 & 1.012 & 1.013 \\ \hline\hline
		\multicolumn{1}{|c||}{\cellcolor{lightgray} theory}         & 0.900 & 0.833 & 0.786 & 0.750 & 0.722 & \multicolumn{1}{c||}{0.700} & \multicolumn{1}{c|}{1.000} & 1.000 & 1.000 & 1.000 & 1.000 & 1.000 \\ \hline
	\end{tabular}
	\caption{Experimental order of convergence (Taylor--Hood): $\textup{\texttt{EOC}}_i(e_{q,i}^{\textup{modular}})$,~${i=1,\dots,6}$.}\label{tab3}\vspace*{-5mm}
\end{table}

\subsubsection{\textit{(Non-)sharpness of the Muckenhoupt  regularity condition \eqref{eq:reg-assumption} in  Corollary \ref{cor:error_pressure_optimal}}}

\hspace*{5mm}
In order to examine the Muckenhoupt  regularity condition \eqref{eq:reg-assumption} in  Corollary \ref{cor:error_pressure_optimal} for its sharpness, \textit{i.e.}, necessity, following the construction in \cite{kr-nekorn,kr-nekorn-add}, we consider the three dimensional case
(\textit{i.e.}, $d=3$) and choose the right-hand side ${\bff\in (L^{p'}(\Omega))^3}$ such that $\bfv \in \Vo(0)$ and $q\in \Qo$, for every $x\coloneqq (x_1,x_2,x_3)^\top\in \Omega$ defined via
\begin{align*}
	\bfv(x)\coloneqq \sum_{k\in \mathbb{N}}{\bfv^k(x)}\,,\qquad q(x)\coloneqq 25\,\big(\vert x\vert^\gamma -\langle \vert \cdot\vert^{\gamma} \rangle_\Omega\big)\,,
\end{align*}
for every $k\in \mathbb{N}$, 
\begin{align*}
	\bfv^k(x)\coloneqq \begin{cases}
		\frac{k}{r^k}\big(\frac{r^k}{2}+\frac{\vert x-\mathbf{m}^k\vert }{2}-\vert x-\mathbf{m}^k\vert\big)(x_2-m_2^k,-(x_1-m_1^k),0)^\top&\text{ if }x\in B_{r^k}^3(\mathbf{m}^k)\,,\\
		\mathbf{0}&\text{ if }x\in \Omega\setminus B_{r^k}^3(\mathbf{m}^k)\,.
	\end{cases}
\end{align*}
for sequences $(r^k)_{k\in \mathbb{N}}\subseteq \mathbb{R}^{>0}$ and $(\mathbf{m}^k)_{k\in \mathbb{N}}\coloneqq ((m^k_1,m^k_2,m^k_3)^\top)_{k\in \mathbb{N}}\subseteq \Omega$ satisfying 
\begin{align}\label{conditions}
	r^{k+1}\leq \frac{1}{2}r^k\,,\quad B_{r^k}^3(\mathbf{m}^k)\subseteq \Omega\,,\quad B_{r^k}^3(\mathbf{m}^k)\cap B_{r^{k+1}}^3(\mathbf{m}^{k+1})=\emptyset\qquad\text{ for all }k\in \mathbb{N}\,.
\end{align} 
Following the argumentation in \cite{kr-nekorn,kr-nekorn-add}, one finds that $	\bfF(\bfD\bfv)\in (W^{1,2}(\Omega))^{3\times 3}$ 
while, on the other hand, one finds that
\begin{align}\label{violation}
	\mu_{\bfD\bfv} \coloneqq (\delta+\vert \overline{\bfD\bfv}\vert)^{p-2}\notin A_2(\mathbb{R}^3)\,.
\end{align}
In the numerical experiments, we choose $r^k\coloneqq 2^{-k-2}$ and $\mathbf{m}^k\coloneqq \mathbf{e}_1+3 r^k ( \mathbf{q}_0-\mathbf{e}_1)/\vert \mathbf{q}_0-\mathbf{e}_1\vert$~for~all~${k\in \mathbb{N}}$, where $\mathbf{q}_0\hspace*{-0.15em}\in \hspace*{-0.1em}K_0\hspace*{-0.15em}\coloneqq\hspace*{-0.15em} \textup{conv}\{\mathbf{0},\mathbf{e}_1,\mathbf{e}_1+\mathbf{e}_2,\mathbf{e}_1+\mathbf{e}_2+\mathbf{e}_3\}\hspace*{-0.15em}\in\hspace*{-0.15em} \mathcal{T}_0$ is the quadrature point of the  Keast~rule~\texttt{(KEAST7)} (\textit{cf}.\ \cite{keast}) in the initial triangulation $\mathcal{T}_0$ that is closest to the unit vector $\mathbf{e}_1\coloneqq (1,0,0)^\top\in \mathbb{S}^2$~(\textit{cf}. Figure~\ref{fig:1} (LEFT))\footnote{Note that there is a second quadrature point of the  Keast rule \texttt{(KEAST7)} (\textit{cf}.\ \cite{keast}) in an adjacent element that has the same distance to the first unit vector $\mathbf{e}_1\in \mathbb{S}^2$}. In this way, we can guarantee that conditions \eqref{conditions} are met as well as that for every $i=1,\ldots,5$, there exists an integer $N_i\in \mathbb{N}$ such that 
\begin{align*}
	\mathbf{q}_i\coloneqq (1-2^{-N_i})\,\mathbf{e}_1+2^{-N_i}\, \mathbf{q}_0\in \overline{B_{r^{N_i}}^3(\mathbf{m}^{N_i})}\,,
\end{align*}
which is the quadrature point of the  Keast rule \texttt{(KEAST7)} (\textit{cf}.\ \cite{keast}) in $\mathcal{T}_i$ that is closest to the unit vector $\mathbf{e}_1\in \mathbb{S}^2$. 
This has the consequence that for every $k\in \mathbb{N}$ with $k\ge N_i+1$, the ball $B_{r^k}^3(\mathbf{m}^k)$ does not contain any quadrature point of the Keast rule \texttt{(KEAST7)} (\textit{cf}.\ \cite{keast}) in $\mathcal{T}_i$. Hence,~in~the~numerical~experiments, for every refinement step $i=1,\ldots,5$,~we~can~interchange $	\bfv\coloneqq \sum_{k\in \mathbb{N}}{\bfv^k}\in \smash{(W^{1,p}_0(\Omega))^3}$ (which cannot be directly implemented) by 	$\overline{\bfv}^{\smash{N_i}}\coloneqq \sum_{k=1}^{\smash{N_i}}{\bfv^k}\in \smash{(W^{1,p}_0(\Omega))^3}$ (\textit{cf}.\ Figure \ref{fig:1} and Figure \ref{fig:2}(MIDDLE/RIGHT))
(which can directly be implemented); thus,~we~can interchange  $\bff\coloneqq -\textup{div}\,\bfS(\bfD\bfv)+[\nabla\bfv]\bfv+\nabla q\in (L^{p'}(\Omega))^3$ by $\overline{\bff}^{\smash{N_i}}\coloneqq  -\textup{div}\,\bfS(\bfD\bfv^{\smash{N_i}})+[\nabla\bfv^{\smash{N_i}}]\bfv^{\smash{N_i}}+\nabla q\in (L^{p'}(\Omega))^3$ without changing the discrete formulations. 
Also note that the violation of the Muckenhoupt condition \eqref{violation} can be asymptotically confirmed 
using the Keast rule \texttt{(KEAST7)} (\textit{cf}.\
\cite{keast}). In fact, in Figure \ref{fig:3}, denoting by
$\mathrm{d}\mu_{h_i}$ the discrete measure representing the Keast rule
\texttt{(KEAST7)} (\textit{cf}.\ \cite{keast}) in $\mathcal{T}_i$, it
is indicated that 
\begin{align*}
	E_i\coloneqq \bigg(\int_{B_{r^{N_i}}^3(\mathbf{m}^{N_i})}{\mu_{\bfD\bfv}\,\mathrm{d}\mu_{h_i}}\bigg)\bigg(\int_{B_{r^{N_i}}^3(\mathbf{m}^{N_i})}{\mu_{\bfD\bfv}^{-1}\,\mathrm{d}\mu_{h_i}}\bigg)\to\infty \quad(i\to \infty)\,.
\end{align*}
This in turn indicates the violation of the Muckenhoupt condition \eqref{violation} is sufficiently resolved by the Keast rule \texttt{(KEAST7)} (\textit{cf}.\ \cite{keast}).

\hspace*{-6mm}\begin{minipage}{0.5\textwidth}\vspace*{-2.5mm}
	\begin{figure}[H]
		\hspace*{-0.25cm}\includegraphics[width=7.75cm]{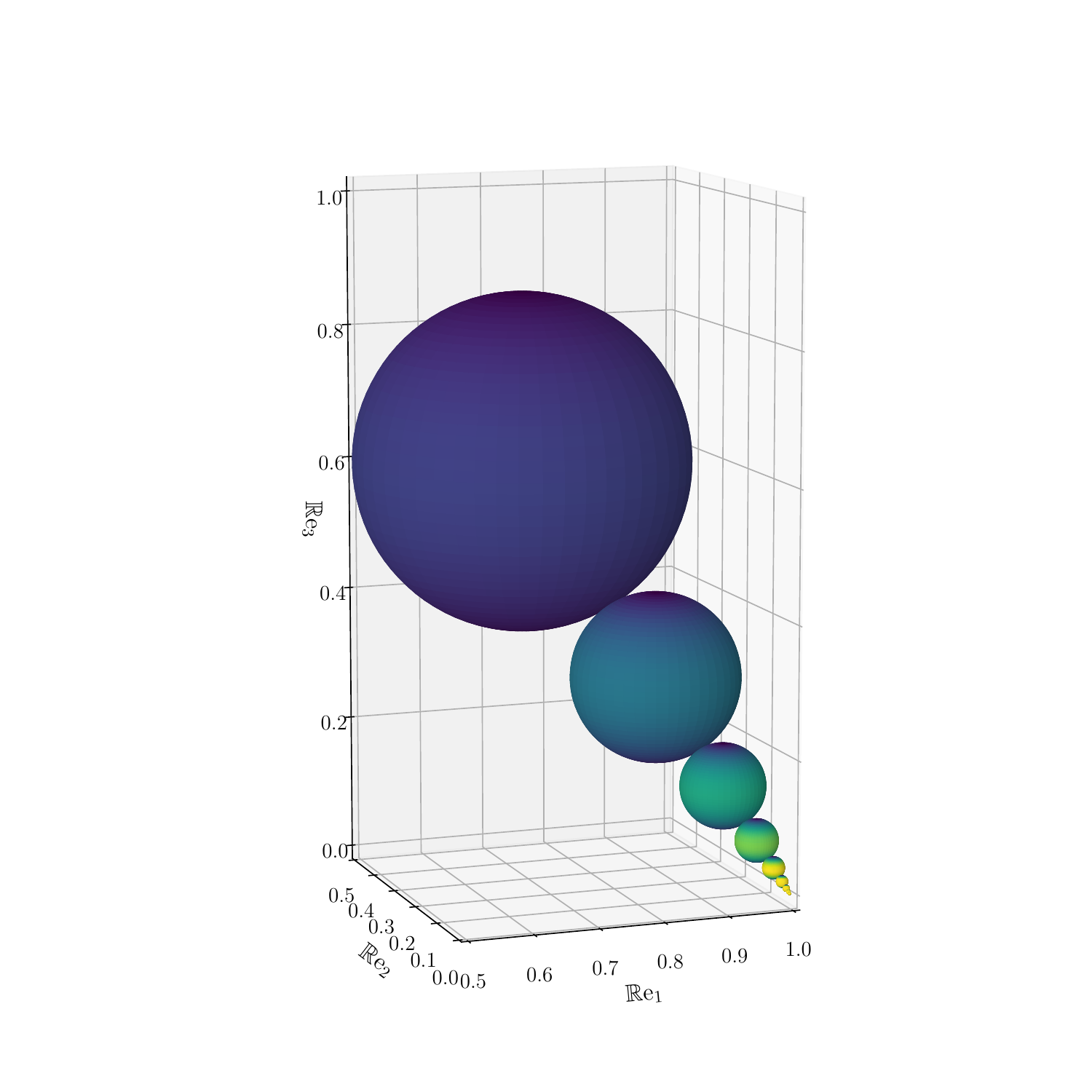}
		\caption{Plot of  the strain rate $\vert\bfD \bfv\vert\colon \Omega\to \mathbb{R}_{\ge 0}$ restricted to the boundary of its support, \textit{i.e.},
			$\partial(\textup{supp}(\vert\bfD \bfv\vert))=\bigcup_{k=1}^\infty{\partial B_{r^k}^3(\mathbf{m}^k)}$.  The color map indicates that the strain rate 
			increases when approaching the first unit vector $\mathbf{e}_1\in \mathbb{S}^2$.}\label{fig:1}
	\end{figure}
\end{minipage}\hspace*{2.5mm}
\begin{minipage}{0.5\textwidth}
	\begin{figure}[H]
		\hspace*{1.5mm}\includegraphics[width=7cm]{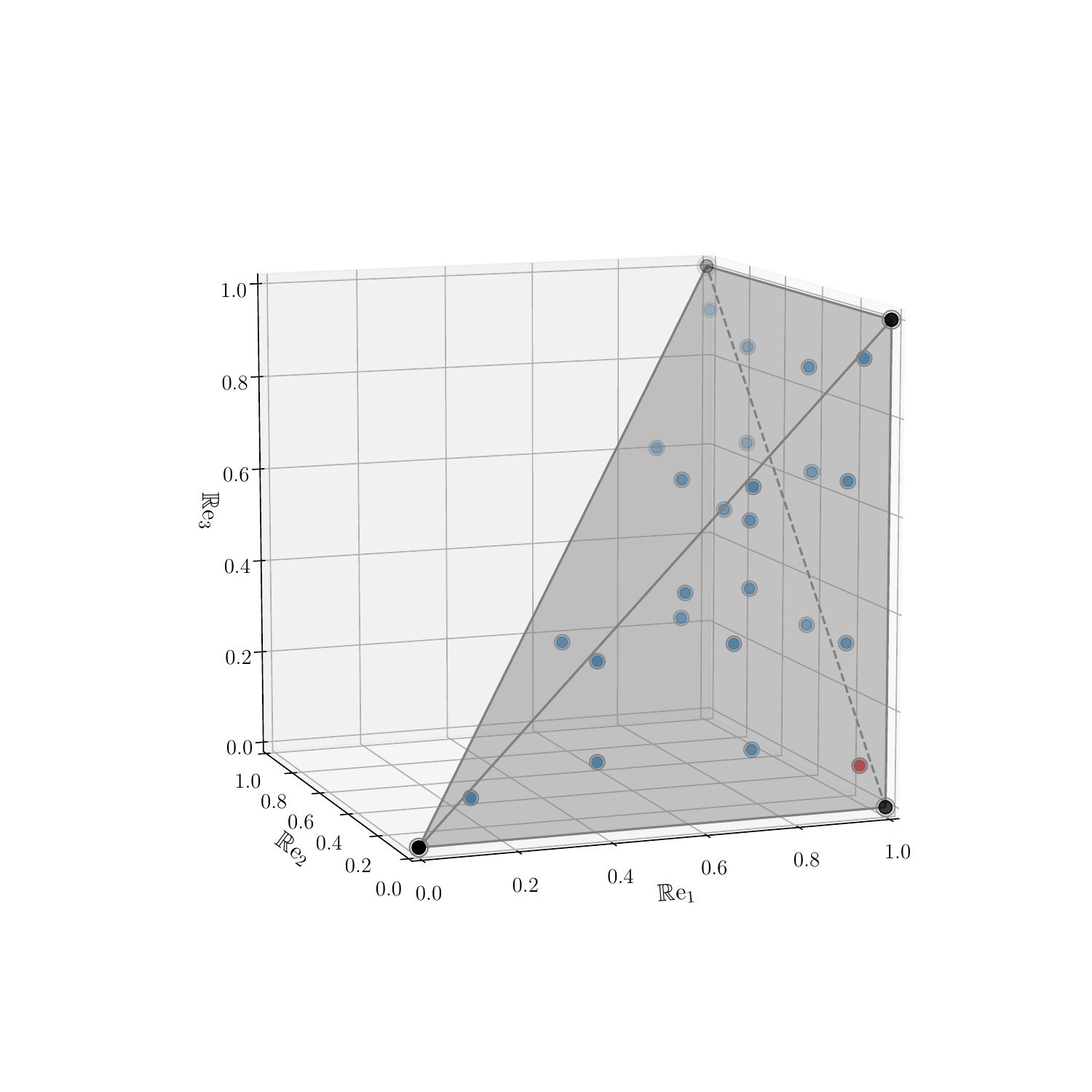}\\\hspace*{1.5mm}\includegraphics[width=7cm]{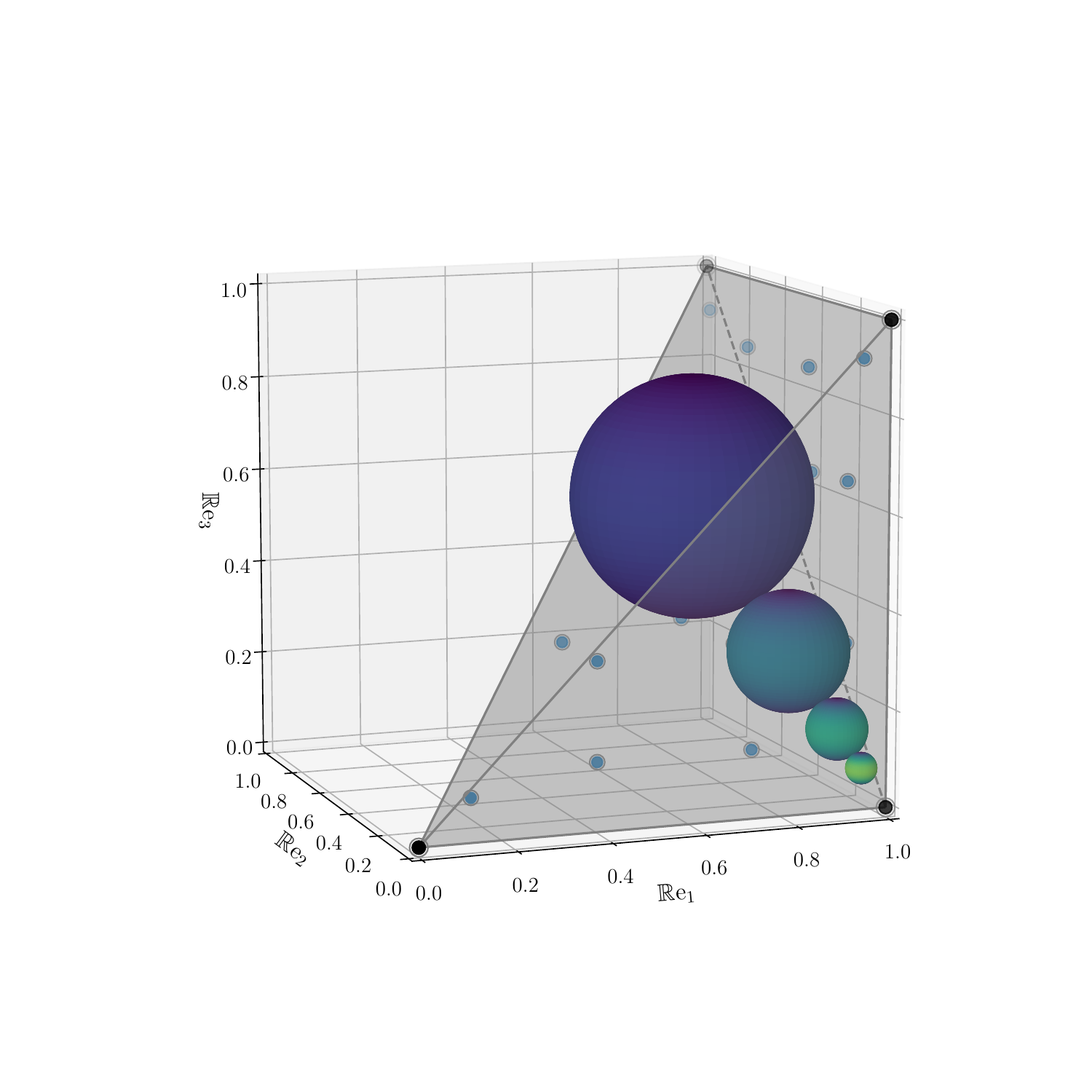}
		\caption{TOP: tetrahedron $K_0\coloneqq \textup{conv}\{\mathbf{0},\mathbf{e}_1,\mathbf{e}_1+\mathbf{e}_2,\mathbf{e}_1+\mathbf{e}_2+\mathbf{e}_3\}$  and the transformed 24 quadrature points of the  Keast rule \texttt{(KEAST7)} (\textit{cf}.\ \cite{keast}). The quadrature point $\mathbf{q}_0\in K_0$ closest to the first unit vector is marked in red; BOTTOM:  $\overline{\bfv}^{N_0}\coloneqq \sum_{k=1}^{\smash{N_0}}{\bfv^k}\in \smash{(W^{1,p}_0(\Omega))^3}$, where $N_0=4$ is minimal such that
			$\mathbf{q}_0\in \textup{supp}(\overline{\bfv}^{N_0})$.}\label{fig:2}
	\end{figure}
\end{minipage}\vspace*{5mm}\enlargethispage{11mm}

For different values of $p\in \{2.25, 2.5, 2.75, 3, 3.25, 3.5\}$ and a
series of triangulations~$\pazocal{T}_{h_i}$, $i = 1,\ldots,4$,
obtained \hspace*{-0.1mm}by \hspace*{-0.1mm}regular, \hspace*{-0.1mm}global \hspace*{-0.1mm}refinement \hspace*{-0.1mm}as \hspace*{-0.1mm}described \hspace*{-0.1mm}above, \hspace*{-0.1mm}the \hspace*{-0.1mm}EOC \hspace*{-0.1mm}is~\hspace*{-0.1mm}computed~\hspace*{-0.1mm}and~\hspace*{-0.1mm}presented~\hspace*{-0.1mm}in~\hspace*{-0.1mm}Tables~\hspace*{-0.1mm}\mbox{\ref{tab4.1}--\ref{tab5.2}}. In both Case \hyperlink{case_1}{1} and Case \hyperlink{case_2}{2}, 
we observe the expected convergence rate of about $\texttt{EOC}_i(e_{\bfF,i})\approx p'/2$, $i=1,2,3$,   (in Case \hyperlink{case_1}{1}) and $\texttt{EOC}_i(e_{\bfF,i})\approx 1$, $ i= 1,\ldots,4$, (in Case \hyperlink{case_2}{2}). On the other hand, in both Case \hyperlink{case_1}{1} and Case \hyperlink{case_2}{2}, although the Muckenhoupt regularity~condition~\eqref{eq:reg-assumption}~is~not~met~(\textit{cf}.~\eqref{violation}), we 
report the convergence rate of about $\texttt{EOC}_i(e_{q,i}^{\textup{modular}})\hspace*{-0.1em}\approx \hspace*{-0.1em} p'/2$, $i= 1,\ldots,4$,   (in Case \hyperlink{case_1}{1})~and~$\texttt{EOC}_i(e_{q,i}^{\textup{modular}})\hspace*{-0.1em}\approx\hspace*{-0.1em} 1$, $ i= 1,\ldots,4$, (in Case \hyperlink{case_2}{2}). 
This shows that the Muckenhoupt regularity condition \eqref{eq:reg-assumption} is only sufficient, but not necessary, for the convergence rates in~Corollary~\ref{cor:error_pressure_optimal}.~In~addition,~in~both~Case~\hyperlink{case_1}{1}~and~Case~\hyperlink{case_2}{2}, we observe the increased convergence rate of about $\texttt{EOC}_i(e_{q,i}^{\textup{norm}})\approx 1$, $i= 1,\ldots,4$,   (in Case \hyperlink{case_1}{1}) and $\texttt{EOC}_i(e_{q,i}^{\textup{norm}})\approx 2/p'$, $ i= 1,\ldots,4$, (in Case \hyperlink{case_2}{2}). This indicates that the convergence rates for the kinematic pressure derived in Corollary \ref{cor:error_pressure_optimal} are also potentially sub-optimal in three dimensions.\newpage

\begin{figure}[ht]
	\hspace*{-2mm}\includegraphics[width=15cm]{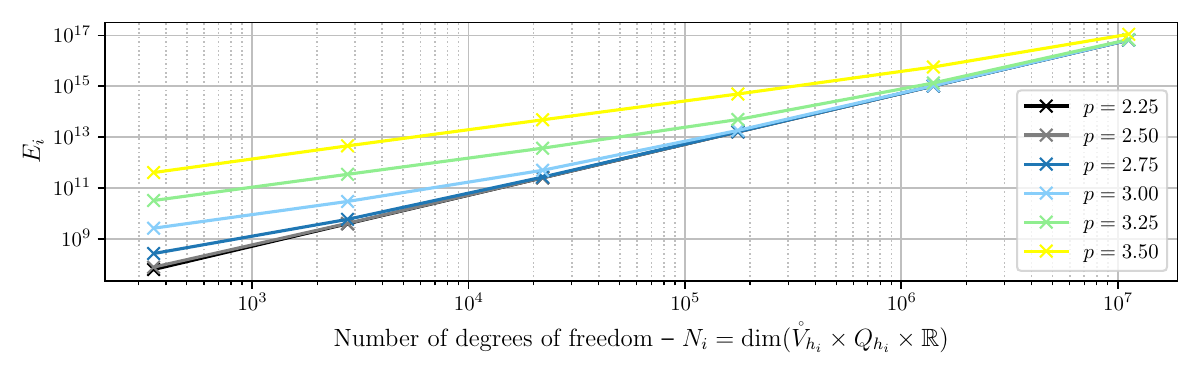}
	\caption{Plots of $E_i\coloneqq (\int_{B_{r^{N_i}}^3(\mathbf{m}^{N_i})}{\mu_{\bfD\bfv}\,\mathrm{d}\mu_{h_i}})(\int_{B_{r^{N_i}}^3(\mathbf{m}^{N_i})}{\mu_{\bfD\bfv}^{-1}\,\mathrm{d}\mu_{h_i}})$, $i=1,\ldots,6$, for $p\in \{2.25,2.5,2.75,3.0,3.25,2.5\}$, where $\mathrm{d}\mu_{h_i}$, $i=1,\ldots,6$, denote the discrete measures representing the Keast rule \texttt{(KEAST7)} (\textit{cf}.\ \cite{keast}), indicating  that $E_i\to \infty$ $(i\to \infty)$~and,~thus, that the violation of the Muckenhoupt condition \eqref{violation} is sufficiently resolved by the Keast rule \texttt{(KEAST7)} (\textit{cf}.\ \cite{keast}).}\label{fig:3}
\end{figure}\vspace*{-5mm}

\begin{table}[ht]
	\setlength\tabcolsep{3pt}
	\centering
	\begin{tabular}{c |c|c|c|c|c|c|c|c|c|c|c|c|} \cmidrule(){1-13}
		\multicolumn{1}{|c||}{\cellcolor{lightgray}$\gamma$}	
		& \multicolumn{6}{c||}{\cellcolor{lightgray}Case \hyperlink{case_1}{1}}   & \multicolumn{6}{c|}{\cellcolor{lightgray}Case \hyperlink{case_2}{2}}\\ 
		\hline 
		
		\multicolumn{1}{|c||}{\cellcolor{lightgray}\diagbox[height=1.1\line,width=0.11\dimexpr\linewidth]{\vspace{-0.6mm}$i$}{\\[-5mm] $p$}}
		& \cellcolor{lightgray}2.25 & \cellcolor{lightgray}2.5  & \cellcolor{lightgray}2.75  &  \cellcolor{lightgray}3.0 & \cellcolor{lightgray}3.25  & \multicolumn{1}{c||}{\cellcolor{lightgray}3.5} &  \multicolumn{1}{c|}{\cellcolor{lightgray}2.25}   & \cellcolor{lightgray}2.5  & \cellcolor{lightgray}2.75  & \cellcolor{lightgray}3.0  & \cellcolor{lightgray}3.25 &   \cellcolor{lightgray}3.5 \\ \hline\hline
		\multicolumn{1}{|c||}{\cellcolor{lightgray}$1$}             & 0.718 & 0.707 & 0.700 & 0.696 & 0.692 & \multicolumn{1}{c||}{0.690} & \multicolumn{1}{c|}{0.798} & 0.834 & 0.857 & 0.874 & 0.888 & 0.899 \\ \hline
		\multicolumn{1}{|c||}{\cellcolor{lightgray}$2$}             & 1.179 & 1.185 & 1.187 & 1.184 & 1.179 & \multicolumn{1}{c||}{1.173} & \multicolumn{1}{c|}{1.379} & 1.533 & 1.642 & 1.720 & 1.776 & 1.817 \\ \hline
		\multicolumn{1}{|c||}{\cellcolor{lightgray}$3$}             & 1.008 & 1.006 & 1.003 & 1.001 & 0.999 & \multicolumn{1}{c||}{0.996} & \multicolumn{1}{c|}{1.170} & 1.289 & 1.377 & 1.443 & 1.493 & 1.533 \\ \hline
		\multicolumn{1}{|c||}{\cellcolor{lightgray}$4$}             & 1.008 & 1.007 & 1.006 & 1.005 & 1.003 & \multicolumn{1}{c||}{1.002} & \multicolumn{1}{c|}{1.172} & 1.297 & 1.392 & 1.465 & 1.522 & 1.566 \\ \hline
		\multicolumn{1}{|c||}{\cellcolor{lightgray}$5$}             & 1.010 & 1.009 & 1.009 & 1.008 & 1.008 & \multicolumn{1}{c||}{1.007} & \multicolumn{1}{c|}{1.175} & 1.304 & 1.405 & 1.484 & 1.546 & 1.595 \\ \hline\hline
		\multicolumn{1}{|c||}{\cellcolor{lightgray} theory}         & 0.900 & 0.833 & 0.786 & 0.750 & 0.722 & \multicolumn{1}{c||}{0.700} & \multicolumn{1}{c|}{1.000} & 1.000 & 1.000 & 1.000 & 1.000 & 1.000 \\ \hline
	\end{tabular}
	\caption{Experimental order of convergence (MINI): $\texttt{EOC}_i(e_{q,i}^{\textup{norm}})$,~${i=1,\dots,5}$.}\vspace*{3mm}
	\label{tab4.1}
	\setlength\tabcolsep{3pt}
	\centering
	\begin{tabular}{c |c|c|c|c|c|c|c|c|c|c|c|c|} \cmidrule(){1-13}
		\multicolumn{1}{|c||}{\cellcolor{lightgray}$\gamma$}	
		& \multicolumn{6}{c||}{\cellcolor{lightgray}Case \hyperlink{case_1}{1}}   & \multicolumn{6}{c|}{\cellcolor{lightgray}Case \hyperlink{case_2}{2}}\\ 
		\hline 
		
		\multicolumn{1}{|c||}{\cellcolor{lightgray}\diagbox[height=1.1\line,width=0.11\dimexpr\linewidth]{\vspace{-0.6mm}$i$}{\\[-5mm] $p$}}
		& \cellcolor{lightgray}2.25 & \cellcolor{lightgray}2.5  & \cellcolor{lightgray}2.75  &  \cellcolor{lightgray}3.0 & \cellcolor{lightgray}3.25  & \multicolumn{1}{c||}{\cellcolor{lightgray}3.5} &  \multicolumn{1}{c|}{\cellcolor{lightgray}2.25}   & \cellcolor{lightgray}2.5  & \cellcolor{lightgray}2.75  & \cellcolor{lightgray}3.0  & \cellcolor{lightgray}3.25 &   \cellcolor{lightgray}3.5 \\ \hline\hline
		\multicolumn{1}{|c||}{\cellcolor{lightgray}$1$}             & 0.646 & 0.590 & 0.550 & 0.522 & 0.500 & \multicolumn{1}{c||}{0.483} & \multicolumn{1}{c|}{0.718} & 0.695 & 0.674 & 0.656 & 0.641 & 0.629 \\ \hline
		\multicolumn{1}{|c||}{\cellcolor{lightgray}$2$}             & 1.061 & 0.987 & 0.932 & 0.888 & 0.852 & \multicolumn{1}{c||}{0.821} & \multicolumn{1}{c|}{1.241} & 1.278 & 1.291 & 1.290 & 1.283 & 1.272 \\ \hline
		\multicolumn{1}{|c||}{\cellcolor{lightgray}$3$}             & 0.908 & 0.838 & 0.788 & 0.751 & 0.721 & \multicolumn{1}{c||}{0.697} & \multicolumn{1}{c|}{1.053} & 1.074 & 1.082 & 1.082 & 1.079 & 1.073 \\ \hline
		\multicolumn{1}{|c||}{\cellcolor{lightgray}$4$}             & 0.908 & 0.839 & 0.790 & 0.753 & 0.725 & \multicolumn{1}{c||}{0.702} & \multicolumn{1}{c|}{1.055} & 1.081 & 1.094 & 1.099 & 1.099 & 1.097 \\ \hline
		\multicolumn{1}{|c||}{\cellcolor{lightgray}$5$}             & 0.909 & 0.841 & 0.792 & 0.756 & 0.728 & \multicolumn{1}{c||}{0.705} & \multicolumn{1}{c|}{1.057} & 1.087 & 1.104 & 1.113 & 1.116 & 1.117 \\ \hline\hline
		\multicolumn{1}{|c||}{\cellcolor{lightgray} theory}         & 0.900 & 0.833 & 0.786 & 0.750 & 0.722 & \multicolumn{1}{c||}{0.700} & \multicolumn{1}{c|}{1.000} & 1.000 & 1.000 & 1.000 & 1.000 & 1.000 \\ \hline
	\end{tabular}
	\caption{Experimental order of convergence (MINI): $\texttt{EOC}_i(e_{q,i}^{\textup{modular}})$,~${i=1,\dots,5}$.}\vspace*{3mm}
	\label{tab4.2}
	\setlength\tabcolsep{3pt}
	\centering
	\begin{tabular}{c |c|c|c|c|c|c|c|c|c|c|c|c|} \cmidrule(){1-13}
		\multicolumn{1}{|c||}{\cellcolor{lightgray}$\gamma$}	
		& \multicolumn{6}{c||}{\cellcolor{lightgray}Case \hyperlink{case_1}{1}}   & \multicolumn{6}{c|}{\cellcolor{lightgray}Case \hyperlink{case_2}{2}}\\ 
		\hline 
		
		\multicolumn{1}{|c||}{\cellcolor{lightgray}\diagbox[height=1.1\line,width=0.11\dimexpr\linewidth]{\vspace{-0.6mm}$i$}{\\[-5mm] $p$}}
		& \cellcolor{lightgray}2.25 & \cellcolor{lightgray}2.5  & \cellcolor{lightgray}2.75  &  \cellcolor{lightgray}3.0 & \cellcolor{lightgray}3.25  & \multicolumn{1}{c||}{\cellcolor{lightgray}3.5} &  \multicolumn{1}{c|}{\cellcolor{lightgray}2.25}   & \cellcolor{lightgray}2.5  & \cellcolor{lightgray}2.75  & \cellcolor{lightgray}3.0  & \cellcolor{lightgray}3.25 &   \cellcolor{lightgray}3.5 \\ \hline\hline
		\multicolumn{1}{|c||}{\cellcolor{lightgray}$1$}             & 4.102 & 4.480 & 3.881 & 3.963 & 3.486 & \multicolumn{1}{c||}{3.529} & \multicolumn{1}{c|}{5.453} & 4.489 & 5.064 & 4.829 & 5.993 & 5.777 \\ \hline
		\multicolumn{1}{|c||}{\cellcolor{lightgray}$2$}             & 1.029 & 1.026 & 1.022 & 1.017 & 1.012 & \multicolumn{1}{c||}{1.008} & \multicolumn{1}{c|}{1.189} & 1.301 & 1.380 & 1.438 & 1.481 & 1.514 \\ \hline
		\multicolumn{1}{|c||}{\cellcolor{lightgray}$3$}             & 1.004 & 1.001 & 0.998 & 0.995 & 0.992 & \multicolumn{1}{c||}{0.990} & \multicolumn{1}{c|}{1.163} & 1.279 & 1.366 & 1.431 & 1.481 & 1.520 \\ \hline
		\multicolumn{1}{|c||}{\cellcolor{lightgray}$4$}             & 1.008 & 1.007 & 1.006 & 1.004 & 1.003 & \multicolumn{1}{c||}{1.002} & \multicolumn{1}{c|}{1.172} & 1.296 & 1.391 & 1.464 & 1.521 & 1.566 \\ \hline
		\multicolumn{1}{|c||}{\cellcolor{lightgray}$5$}             & 1.009 & 1.009 & 1.009 & 1.008 & 1.008 & \multicolumn{1}{c||}{1.007} & \multicolumn{1}{c|}{1.175} & 1.304 & 1.405 & 1.484 & 1.546 & 1.595 \\ \hline\hline
		\multicolumn{1}{|c||}{\cellcolor{lightgray} theory}         & 0.900 & 0.833 & 0.786 & 0.750 & 0.722 & \multicolumn{1}{c||}{0.700} & \multicolumn{1}{c|}{1.000} & 1.000 & 1.000 & 1.000 & 1.000 & 1.000 \\ \hline
	\end{tabular}
	\caption{Experimental order of convergence (Taylor--Hood): $\texttt{EOC}_i(e_{q,i}^{\textup{norm}})$,~${i=1,\dots,5}$.}\vspace*{3mm}
	\label{tab5.1}
	\setlength\tabcolsep{3pt}
	\centering
	\begin{tabular}{c |c|c|c|c|c|c|c|c|c|c|c|c|} \cmidrule(){1-13}
		\multicolumn{1}{|c||}{\cellcolor{lightgray}$\gamma$}	
		& \multicolumn{6}{c||}{\cellcolor{lightgray}Case \hyperlink{case_1}{1}}   & \multicolumn{6}{c|}{\cellcolor{lightgray}Case \hyperlink{case_2}{2}}\\ 
		\hline 
		
		\multicolumn{1}{|c||}{\cellcolor{lightgray}\diagbox[height=1.1\line,width=0.11\dimexpr\linewidth]{\vspace{-0.6mm}$i$}{\\[-5mm] $p$}}
		& \cellcolor{lightgray}2.25 & \cellcolor{lightgray}2.5  & \cellcolor{lightgray}2.75  &  \cellcolor{lightgray}3.0 & \cellcolor{lightgray}3.25  & \multicolumn{1}{c||}{\cellcolor{lightgray}3.5} &  \multicolumn{1}{c|}{\cellcolor{lightgray}2.25}   & \cellcolor{lightgray}2.5  & \cellcolor{lightgray}2.75  & \cellcolor{lightgray}3.0  & \cellcolor{lightgray}3.25 &   \cellcolor{lightgray}3.5 \\ \hline\hline
		\multicolumn{1}{|c||}{\cellcolor{lightgray}$1$}             & 3.692 & 3.734 & 3.050 & 2.972 & 2.518 & \multicolumn{1}{c||}{2.471} & \multicolumn{1}{c|}{4.908} & 3.741 & 3.979 & 3.622 & 4.328 & 4.044 \\ \hline
		\multicolumn{1}{|c||}{\cellcolor{lightgray}$2$}             & 0.926 & 0.855 & 0.803 & 0.763 & 0.731 & \multicolumn{1}{c||}{0.706} & \multicolumn{1}{c|}{1.070} & 1.084 & 1.085 & 1.079 & 1.070 & 1.060 \\ \hline
		\multicolumn{1}{|c||}{\cellcolor{lightgray}$3$}             & 0.903 & 0.834 & 0.784 & 0.746 & 0.717 & \multicolumn{1}{c||}{0.693} & \multicolumn{1}{c|}{1.047} & 1.066 & 1.073 & 1.073 & 1.070 & 1.064 \\ \hline
		\multicolumn{1}{|c||}{\cellcolor{lightgray}$4$}             & 0.907 & 0.839 & 0.790 & 0.753 & 0.725 & \multicolumn{1}{c||}{0.702} & \multicolumn{1}{c|}{1.055} & 1.080 & 1.093 & 1.098 & 1.099 & 1.097 \\ \hline
		\multicolumn{1}{|c||}{\cellcolor{lightgray}$5$}             & 0.908 & 0.841 & 0.792 & 0.756 & 0.728 & \multicolumn{1}{c||}{0.705} & \multicolumn{1}{c|}{1.057} & 1.086 & 1.104 & 1.113 & 1.116 & 1.117 \\ \hline\hline
		\multicolumn{1}{|c||}{\cellcolor{lightgray} theory}         & 0.900 & 0.833 & 0.786 & 0.750 & 0.722 & \multicolumn{1}{c||}{0.700} & \multicolumn{1}{c|}{1.000} & 1.000 & 1.000 & 1.000 & 1.000 & 1.000 \\ \hline
	\end{tabular}
	\caption{Experimental order of convergence (Taylor--Hood): $\texttt{EOC}_i(e_{q,i}^{\textup{modular}})$,~${i=1,\dots,5}$.}
	\label{tab5.2}
\end{table}



\end{document}